\documentclass{amsart}
\usepackage{amsmath,amsthm}
\usepackage{amsfonts,amssymb}
\usepackage{enumerate}
\usepackage{accents,color}
\usepackage{graphicx}

\hfuzz1pc

\addtolength{\textwidth}{1.6cm}
\addtolength{\textheight}{1cm}
\addtolength{\oddsidemargin}{-0.8cm}
\addtolength{\evensidemargin}{-0.8cm}

\newtheorem{thm}{Theorem}[section]
\newtheorem{cor}[thm]{Corollary}
\newtheorem{lem}[thm]{Lemma}
\newtheorem{prop}[thm]{Proposition}
\newtheorem{exam}[thm]{Example}

\newtheorem{defn}[thm]{Definition}

\theoremstyle{remark}

%%%%%%%%%%%%%%%%% Math definitions %%%%%%%%%%%%%%%%%%
\makeatletter
\newcommand{\bddots}{%
  \mathinner{\mkern1mu\raise\p@\vbox{\kern7\p@\hbox{.}}\mkern2mu
    \raise4\p@\hbox{.}\mkern2mu\raise7\p@\hbox{.}\mkern1mu}}

\makeatother

\def\sn{{\mathsf n}}

 \def\a{{\alpha}}
 \def\b{{\beta}}
 \def\g{{\gamma}}
 \def\k{{\kappa}}
 \def\t{{\theta}}
 \def\l{{\lambda}}
 
 \def\o{{\omega}}
 \def\s{\sigma}
 \def\la{{\langle}}
 \def\ra{{\rangle}}

 \def\CD{{\mathcal D}}
 \def\CF{{\mathcal F}}
 \def\CH{{\mathcal H}}

 \def\CL{{\mathcal L}}
 
 \def\CP{{\mathcal P}}

 \def\CU{{\mathcal U}}
 \def\CV{{\mathcal V}}
 \def\CA{{\mathcal A}}
 
 \def\BB{{\mathbb B}}

 \def\NN{{\mathbb N}}

 \def\RR{{\mathbb R}}
  \def\SS{{\mathbb S}}
 
      \def\proj{\operatorname{proj}}
      
      \def\vi{\varphi}

\newcommand{\wh}{\widehat}
\def\rhow{{\lceil\frac{s}{2}\rceil}}
\newcommand{\rhoh}{{\lfloor\frac{s}{2}\rfloor}}

\def\ball{\mathbb{B}^d}
\def\sph{\mathbb{S}^{d-1}}
\def\f{\frac}

\def\ln{\|\kern-{}}

\graphicspath{{./}}
\begin{document}

\title[Spectral approximation on the unit ball]
{Spectral approximation on the unit ball}

\author{Huiyuan Li}
\address{Institute of Software
Chinese Academy of Sciences\\ Beijing 100190, China}
\email{huiyuan@iscas.ac.cn}
\author{Yuan Xu}
\address{  Department of Mathematics\\ University of Oregon\\
    Eugene, Oregon 97403-1222.}
\email{yuan@math.uoregon.edu}

\date{\today}
\keywords{Spectral approximation, Sobolev space, order of approximation, unit ball}
\subjclass{ }

\thanks{The first author was supported by NSFC Grant 91130014. The second author was supported in part by NSF Grant DMS-1106113}

\begin{abstract}
Spectral approximation by polynomials on the unit ball is studied in the frame of the Sobolev spaces
$W^{s}_p(\ball)$, $1<p<\infty$. The main results give sharp estimates on the order of approximation
by polynomials in the Sobolev spaces and explicit construction of approximating polynomials. One 
major effort lies in understanding the structure of orthogonal polynomials with respect to an inner product
of the Sobolev space $W_2^s(\ball)$.  As an application, a direct and efficient spectral-Galerkin method 
based on our orthogonal polynomials is proposed for the second and the fourth order elliptic equations 
on the unit ball, its optimal error estimates are explicitly derived for both procedures in the Sobolev spaces 
and, finally, numerical examples are presented to illustrate the theoretic results. 
\end{abstract}

\maketitle

%\tableofcontents

\section{Introduction}
\setcounter{equation}{0}

Spectral methods have been used recently for solving partial differential equations on the unit disk, unit ball, or other
domains with cylindrical or spherical geometry. Their increasing popularity on these domains lies partially in various 
applications in earth sciences, disk or sphere shaped mirrors and lenses,  fluid flow in a pipe or rotating cylinder, 
accretion disks in planetary astronomy, to name a few. 

In \cite{BM}, Poisson equation on an axisymmetric domain is transformed into a system of two-dimensional  problems by the polar transformation, and the axisymmetric problems are then approximated by an appropriate 
spectral-Galerkin method. Fast spectral-Galerkin methods for Helmholtz equations on a disk or a cylinder are proposed in \cite{Shen97,Shen2000}, using the polar transformation with essential pole conditions  and the Chebyshev or Legendre polynomial bases
in the radial  direction.  Subsequently, these types of spectral--Galerkin methods have been extended  to other domains with spherical geometries, including the 3-dimensional ball \cite{AQ07,Shen99,ShenWang}.
Meanwhile, mixed Jacobi-Fourier spectral method are presented for elliptic equations on a disk  \cite{Matsu,WangGuo}
and a mixed Jacobi-harmonic spectral approximation is proposed in \cite{GuoHuang} for a Navier-Stokes equation 
in a ball.  See \cite{Boyd2001, Boyd, CHQZ}   for a comprehensive review 
of  spectral methods and  their special treatments  in polar/spherical coordinates.
Moreover, an alternative approach for solving differential equations in a smooth domain
is to map the domain into the unit ball and then apply a spectral method \cite{ACH1,ACH2, AH}.

\iffalse 
However, apparent disadvantages  and difficulties  in such a type of spectral-Galerkin method lie in:   (i). the polar transformation introduces variable coefficients in the reduced problem in polar/spherical coordinates, which increases the bandwidth of the discrete linear system;   
(ii). the enforcement of pole conditions  induces 
artificial singularities at the pole either explicitly or implicitly, which  brings about extra difficulties 
to both the design of  efficient approximation schemes   and the corresponding  numerical analysis \cite{HuangMaSun}; (iii). as the  numerical analysis is concerned, the approximation errors are measured and estimated in anisotropic, instead of genuine, Sobolev norms.    

Initiated by direct and efficient spectral method of Atkinson and his collaborators that uses orthogonal polynomials to
solve on linear  elliptic equations on the disk with both homogeneous Dirichlet and Neumann boundary conditions \cite{ACH1, ACH2}. The second author of this paper studied Sobolev orthogonal polynomials on a unit ball
\cite{X06, X08}, which leads to a tri-diagonal linear system  for second order elliptic equation with constant 
coefficients, as can be seen below. It 
This also give challenges to researchers on spectral approximations on the unit ball and its applications in direct and efficient spectral methods on the unit ball. 
\fi

One of the challenging problems in the spectral methods on the unit ball is to measure and estimate the errors of approximation in 
genuine, instead of anisotropic, Sobolev norms. Such estimates were established for the product domain in 
\cite{CHQZ,CQ} but has been lacking in most of the works on the unit disk or the unit ball. The problem of characterizing 
best approximation by the smoothness 
of functions is intensively studied in approximation theory. The two problems are closely related but not exactly
the same as we shall explain below. The purpose of this paper is to conduct a comprehensive study for the spectral 
approximation on the unit ball $\BB^d$ of $\RR^d$, making use of recent advances in both approximation theory,
orthogonal polynomials, and spectral methods. 

Spectral approximation for solving an elliptic equation on $\BB^d$ looks for approximate solutions that are 
polynomials written in terms of certain orthogonal basis on the ball and their coefficients are determined by 
the Galerkin method. To understand the convergence of such an approximation process, it is necessary to 
study polynomial approximation in the Sobolev space $W_2^s(\ball)$, where $s$ is a positive integer,  that 
consists of functions whose derivatives up to $s$-th order are all in $L^2(\ball)$. In some literatures, the space 
$W_2^s(\ball)$ is called $H^s(\ball)$.  For $f \in W^s_2(\ball)$, let $S_n^{-s}  f$ denote its best polynomial approximation
of degree at most $n$. For spectral approximation, the desired estimate for $f \in W_2^r(\ball),\, r\ge s$ is of the form 
\begin{equation} \label{spec-approx}
    \|f - S_n^{-s} f \|_{W_2^k(\ball)} \le c\,n^{-r + k} \|f\|_{W_2^r(\ball)}, \quad 0 \le k \le s,
\end{equation}
where $\|\cdot\|_{W_2^r(\ball)}$ denotes the norm of $W_2^r(\ball)$ defined by 
$$
  \|f\|_{W_2^r(\ball)}: =  \Big (\sum_{|\a| \le r} \| \partial^\a f\|_{L^2(\ball)}\Big)^{1/2}. 
$$
One of the main result of this paper is to establish this estimate and, more generally, establish its analogue 
in the space $W_p^s(\ball)$ for $1< p < \infty$. 

The difficulty of quantifying the error of polynomial approximation on the unit ball lies in the strong 
influence of the boundary of the ball on the approximation behavior. This is well documented for 
approximation on a closed interval on the real line. A complete characterization of best approximation 
on the unit ball is only carried out recently. In \cite{DX10}, two moduli of smoothness and their equivalent
$K$-functionals were introduced and used to establish both direct and inverse theorems that characterize
the behavior of best approximation on the unit ball. In \cite{DX11}, approximation in the Sobolev space
was studied and estimate \eqref{spec-approx} for $s= 0$ was established, more generally for 
$1 \le p \le \infty$ (\cite[Corollary 5.4]{DX11}), and the derivative estimates were established for
angular derivatives, which however do not imply \eqref{spec-approx}. What we can prove relatively
effortless (see Theorem \ref{eq:approx-weight} below) is the following estimate 
\begin{equation} \label{spec-approx2}
    \left \| \phi^{\frac{|\a|}{2} } (\partial^\a  f - \partial^\a  S_n^0 f) \right \|_{L^2(\ball)} \le  c\, n^{-s} \|f\|_{W_2^s(\ball)},\quad
        |\a| \le s-1, \quad \a \in \NN_0^d, 
\end{equation}
where $\phi(x) = 1 -\|x\|^2$ vanishes on the boundary sphere $\sph$ of $\ball$ and $S_n^0$ is the partial
sum of the Fourier orthogonal expansion in $L^2(\ball)$. This estimate, however, is weaker than 
\eqref{spec-approx} because of the power of $\phi(x)$ in its left hand side.

It tuns out that what we need for proving \eqref{spec-approx} is the orthogonal structure of the Soblolev space 
$W_2^s(\ball)$, not the orthogonal structure of $L^2(\ball)$. An essential step in our study is to study orthogonal 
polynomials for with respect to the inner product 
$$
\la f ,g \ra_{-s}:= \la \nabla^s f, \nabla^s g \ra_{\ball} + \sum_{k=0}^{\rhow -1} \la \Delta^k f, \Delta^k g \ra_{\sph} 
$$
of  $W_2^s(\ball)$, which we call the Sobolev orthogonal polynomials. 
Initially motivated by direct and efficient spectral method of Atkinson and his collaborators that uses orthogonal 
polynomials to solve linear elliptic equations on the disk \cite{ACH1, ACH2, AH},  the Sobolev orthogonal 
polynomials on the ball with respect to $\la \cdot, \cdot \ra_{-1}$ were studied in \cite{X08} and those with 
respect to $\la \cdot, \cdot \ra_{-2}$ were 
studied in \cite{PX, X06}. In these works, Sobolev orthogonal bases were  constructed in terms of the orthogonal polynomials 
for $L^2(\varpi_\mu,\ball)$ %, i.e., the space of square integrable functions  on $\ball$ with respect to the weight function  $\varpi_{\mu}$, 
with $\mu=1$ and $2$, respectively, where  the weight  function $\varpi_\mu(x) := (1-\|x\|^2)^{\mu}$, 
which are given explicitly in terms of spherical harmonics and the Jacobi polynomials $P_n^{(\a,\b)}(t)$ 
that are orthogonal polynomials with respect to $(1-t)^\a (1+t)^\b$ on $[-1,1]$. For larger $s$, however, the 
orthogonal structure is more complicated, and we need to extend the orthogonal basis for $L^2(\varpi_\mu,\ball)$
to allow $\mu$ to be negative integers, which in turn requires us to use extensions of the Jacobi polynomials 
with negative indexes. This is prompted by the realization that the Sobolev orthogonal polynomials for 
$s =-1$ and $s= -2$ in \cite{X06, X08} can be expressed in terms of orthogonal polynomials for $L^2(\varpi_\mu,\ball)$
with $\mu = -1$ and $-2$, and, heuristically, the negative weight could cancel out the $\phi^{|\a|/2}$ term
in \eqref{spec-approx2}. The Jacobi polynomials with negative indexes have been used in spectral 
approximation on other domains in \cite{GSW06,GSW08,LiShen,SWL07}. One of our main results is 
an explicitly constructed mutually orthogonal polynomial basis for $\la \cdot,\cdot\ra_{-s}$, which could be 
used as the building blocks for the spectral-Galerkin  method. 

For $f \in L^2(\ball)$, its $n$-th best polynomial approximation is given by the $n$-th partial sum of its 
Fourier orthogonal expansion on the ball. For $f \in W_2^s(\ball)$, we shall prove that the best approximating
polynomials to $f$ are $S_n^{-s} f$, the partial sums of the Fourier orthogonal expansion in 
$W_2^r(\ball)$ equipped with the inner product $\la \cdot,\cdot \ra_{-s}$, which can be expressed explicitly
in terms of the mutually orthogonal polynomials that we constructed. For the $W_p^s(\ball)$ with $p \ne 2$, 
the best approximating 
polynomial is not explicitly known, but we are able to show that a near-best approximating polynomial,
denoted by $S_{n,\eta}^{-s}$ and defined via a smooth cut-off function $\eta$, satisfies our sharp estimate in
$W_p^s(\ball)$. Both $S_n^{-s} f$ and $S_{n,\eta}^{-s} f$ are given by explicit formulas that can be easily 
computed numerically (see Section 4). Our main result on approximation in the Sobolev space is the following:  

\begin{thm}
Let $s,r = 1,2\ldots$. For any $f \in W_p^r(\ball)$, $r\ge s$, $1 < p < \infty$, there is a constant $c$ independent of
$f$ and $n$, such that 
$$
   \|f -S_{n,\eta}^{-s} f\|_{W_p^k(\ball)} \le c n^{-r + k} \|f\|_{W_p^r(\ball)}, \qquad  k =0,1, \ldots, s,
$$
where $S_{n,\eta}^{-s} f$ can be taken as $S_{n}^{-s}f $ for $p=2$. 
\end{thm}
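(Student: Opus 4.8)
The plan is to reduce the Sobolev-space estimate to three ingredients: (i) the $L^p$-boundedness of the near-best operator $S_{n,\eta}^{-s}$ on an appropriate scale of spaces, (ii) commutation identities relating $\partial^\a S_{n,\eta}^{-s} f$ to $S_{m,\eta'}^{0}(\partial^\a f)$ for a comparable degree $m$ and a possibly different cut-off $\eta'$, and (iii) the already-known $L^p$ Jackson-type estimate \eqref{spec-approx2} together with its $p\neq 2$ analogue from \cite{DX11,DX10}. First I would fix $k$ with $0\le k\le s$ and write, for $|\a|=k$, the quantity $\partial^\a(f-S_{n,\eta}^{-s}f)$. The point of using the Sobolev-orthogonal polynomials constructed in the body of the paper (rather than the $L^2(\ball)$-orthogonal ones) is precisely that the partial-sum operator $S_{n}^{-s}$ in the inner product $\la\cdot,\cdot\ra_{-s}$ is the orthogonal projection, hence is a contraction in the $W_2^s$-norm induced by $\la\cdot,\cdot\ra_{-s}$; this kills the $\phi^{|\a|/2}$ loss visible in \eqref{spec-approx2}. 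For general $p$ one replaces the projection by the de~la~Vallée-Poussin-type operator $S_{n,\eta}^{-s}$ defined via the smooth cut-off $\eta$, whose $W_p^s$-boundedness (uniformly in $n$) is the $p\neq 2$ substitute for orthogonality.

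The main steps, in order, would be: (1) establish that $S_{n,\eta}^{-s}$ reproduces polynomials of degree $\le n$ (immediate from $\eta\equiv 1$ near $0$) and is bounded on $W_p^k(\ball)$ for $0\le k\le s$, uniformly in $n$; for $p=2$ this is orthogonality of $S_n^{-s}$ in $\la\cdot,\cdot\ra_{-s}$ plus equivalence of $\la\cdot,\cdot\ra_{-s}^{1/2}$ with $\|\cdot\|_{W_2^s(\ball)}$, for $p\neq 2$ it follows from a Cesàro/Marcinkiewicz-type multiplier estimate for the Sobolev-orthogonal expansion, reduced via the explicit basis to known multiplier bounds for $L^p(\varpi_\mu,\ball)$ with $\mu$ a (possibly negative) integer. (2) Combine boundedness with the reproduction property by the standard near-best argument: for any polynomial $Q$ of degree $\le n/2$ (say),
\begin{equation*}
  \|f-S_{n,\eta}^{-s}f\|_{W_p^k(\ball)} \le \|f-Q\|_{W_p^k(\ball)} + \|S_{n,\eta}^{-s}(f-Q)\|_{W_p^k(\ball)} \le c\,\|f-Q\|_{W_p^s(\ball)}.
\end{equation*}
(3) Choose $Q$ to be a near-best polynomial approximant to $f$ in $W_p^s(\ball)$ and invoke the Jackson estimate: $E_n(f)_{W_p^s(\ball)} := \inf_{\deg Q\le n}\|f-Q\|_{W_p^s(\ball)} \le c\,n^{-(r-s)}\|f\|_{W_p^r(\ball)}$ for $f\in W_p^r(\ball)$, $r\ge s$, which itself is obtained by differentiating $s$ times and applying the $L^p$ Jackson theorem of \cite{DX11} (together with \eqref{spec-approx2} and the characterization in \cite{DX10}) to $\partial^\a f$, $|\a|=s$. (4) Finally, interpolate in $k$: one has $\|f-S_{n,\eta}^{-s}f\|_{W_p^k}\le c\,n^{-(r-s)}\|f\|_{W_p^r}$ from steps (2)--(3) at $k=s$, while at $k=0$ the same near-best argument combined with the $L^p$ estimate $E_n(f)_{L^p(\ball)}\le c\,n^{-r}\|f\|_{W_p^r(\ball)}$ gives the exponent $-r$; a Landau--Kolmogorov-type inequality on the ball (or direct inspection of the explicit expansion coefficients, which decay geometrically in the relevant Jacobi index) yields the intermediate exponent $-r+k$ for $0<k<s$.

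The hardest part will be step (1) for $p\neq 2$: proving that $S_{n,\eta}^{-s}$ is bounded on $W_p^k(\ball)$ uniformly in $n$. This requires understanding how differentiation interacts with the Sobolev-orthogonal expansion — i.e.\ showing that $\partial^\a$ carries the expansion of $f$ in the $\la\cdot,\cdot\ra_{-s}$-orthogonal basis to something controlled by a standard $L^p(\varpi_\mu,\ball)$ partial-sum/Cesàro operator — and then transferring the known $L^p$ multiplier theory for orthogonal expansions on the ball (Cesàro summability, the Marcinkiewicz multiplier theorem for Jacobi-type expansions) through the explicit basis formulas involving $P_n^{(\a,\b)}$ with negative $\b$. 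The negative-index Jacobi polynomials are the source of the technical difficulty: one must check that the "bad" low-degree terms (those for which the negative index makes the Jacobi polynomial degenerate) contribute only a fixed-dimensional space that is handled separately, while the generic terms obey the same estimates as in the classical $\mu\ge 0$ case. Once boundedness is in hand, steps (2)--(4) are routine, and the commutation identities needed in (3)--(4) are exactly the structural relations between $\nabla^s$, $\Delta^k$ and the Sobolev basis established earlier in the paper.
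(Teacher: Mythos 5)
There is a genuine gap, and it sits at the heart of your step (1): the claimed uniform boundedness of $S_{n,\eta}^{-s}$ on $W_p^k(\ball)$ for all $0\le k\le s$. For $k<s$ this is not available — not even for $p=2$, where orthogonality of $S_n^{-s}$ only gives a contraction in the norm induced by $\la\cdot,\cdot\ra_{-s}$, i.e.\ the $W_2^s$-norm, and says nothing about $L^2$ or $W_2^k$, $k<s$. Your step (2) therefore only yields $\|f-S_{n,\eta}^{-s}f\|_{W_p^k(\ball)}\le c\,E_n(f)_{W_p^s(\ball)}\le c\,n^{-(r-s)}\|f\|_{W_p^r(\ball)}$ for every $k$, which is the correct rate only at the endpoint $k=s$. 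Your proposed fix at $k=0$ — run the near-best argument in $L^p$ using uniform $L^p$-boundedness of $S_{n,\eta}^{-s}$ — would give $\|f-S_{n,\eta}^{-s}f\|_{p,\ball}\le c\,E_n(f)_{p,\ball}$, and the paper explicitly states (just before its Theorem on $W_p^s$-approximation) that for $s\ge 2$ the error is \emph{not} directly bounded by $E_n(f)_{p,\ball}$; the operator is built from the $\la\cdot,\cdot\ra_{-s}$ inner product, which requires $s$ derivatives of $f$, and no uniform $L^p\to L^p$ bound is proved or used anywhere. The Landau--Kolmogorov step for intermediate $k$ then has no valid $k=0$ endpoint to interpolate against. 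The multiplier-theorem route you sketch for $p\ne 2$ is exactly the hard analysis the paper avoids.

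What the paper actually does: (a) for $k=s$, it proves the commutation identity $\Delta^{\rhoh}S_{n,\eta}^{-s}f=S_{n-2\rhoh,\eta}^{2\rhoh-s}\Delta^{\rhoh}f$ (reducing to $S_{n,\eta}^{0}$ for even $s$ and to $S_{n,\eta}^{-1}$, hence via $\partial_iS_{n,\eta}^{-1}=S_{n-1,\eta}^{0}\partial_i$ again to $S_{n,\eta}^{0}$, for odd $s$), together with the boundary identity $\Delta^k\proj_n^{-s}f(\xi)=\proj_{n-2k}^{\CH}\Delta^kf(\xi)$ and the norm equivalence $\|f\|_{W_p^s(\ball)}\sim\|\nabla^sf\|_{p,\ball}+\sum_k\|\Delta^kf\|_{W_p^{s-2k-1/p}(\sph)}$; this is how the "top order" estimate is obtained without any multiplier theorem for the Sobolev-orthogonal expansion. (b) For $k=0$ it runs an Aubin--Nitsche duality argument: it solves an auxiliary $2s$-order elliptic problem $(-\Delta)^su=g^*$ with boundary conditions matched to $\la\cdot,\cdot\ra_{-s}$, uses the isomorphism $\|u\|_{W_q^{2s}(\ball)}\le c\|g^*\|_{q,\ball}$, and gains the extra factor $n^{-s}$ from approximating $u$. (c) The intermediate $0<k<s$ then follow from the Adams interpolation inequality for intermediate derivatives. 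If you want to salvage your outline, you must replace the uniform $W_p^k$-boundedness claim for $k<s$ by a duality argument of this type; the rest of your plan (reproduction of polynomials, commutation with differentiation, reduction to the classical Jackson estimates on the ball) is consistent with the paper's strategy at the $k=s$ level.
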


More precise results of this nature are stated in Section 4.1 below. To illustrate the application of this 
result in the spectral approximation, we will consider two examples, the Helmholtz equation and the biharmonic 
equation on the unit ball, and demonstrate how our results on approximation in the Sobolev space can be 
used to error estimates in the  spectral-Galerkin method. Furthermore, we provide numerical examples for
these equations for $d=2$ and $d=3$, which further illustrate our findings.
% Our results can also be used 
%for error estimate of the spectral methods for the Poisson equations in \cite{ACH1,ACH2, AH}. 

The paper is written with readers in both approximation theory community and spectral method community 
in mind. The problem of \eqref{spec-approx} is originated and studied in the spectral method, which is 
closely tied to the problem of characterizing best approximation that has been a central theme and studied 
intensely in approximation theory. Our approach uses a mixed bag of tools, developed in both approximation 
theory and spectral methods. It is our hope that this paper will stimulate further collaboration between the two 
communities. 

The paper is organized as follows. In the next section we present background materials, orthogonal polynomials 
on the unit ball, Fourier orthogonal expansions, and recent results on approximation on the unit ball. The 
orthogonal structure of the Sobolev space is developed in Section 3. The main results on approximation by
polynomials in the Sobolev space are stated and proved in Section 4. Finally, in Section 5, we discuss 
applications of our main results in the spectral-Galerkin  methods and present our numerical examples. 
To keep the presentation fluent, we leave technical details of extending orthogonal bases to negative
indexes and proving equivalence of norms in the Sobolev space to Appendix A and Appendix B, respectively. 

%The second author thanks Professor Ken Atkinson for drawing his attention to this problem.

\section{Preliminary and background}
\setcounter{equation}{0}

For $x,y  \in \RR^d$, we use the usual notation of $\|x\|$ and $\la x,y \ra$ to denote the Euclidean 
norm of $x$ and the dot product of $x,y$.  The unit ball and the unit sphere in $\mathbb{R}^d$ are denoted, respectively, by
$$
\ball :=\{x\in \mathbb{R}^d: \|x\| \le 1\} \quad \textrm{and} \quad \SS^{d-1}:=\{\xi\in \mathbb{R}^d: \|\xi\| = 1\}.
$$
Throughout this paper, we let $\partial_i$ denote the $i$-th partial derivative operator, let 
$\nabla = (\nabla_1,\ldots, \nabla_d)$ be the gradient and let  $\Delta = \partial_1^2 + \ldots + \partial_d^2$ be
the usual Laplace operator. We denote by $c$ a constant that depends only on $p$, $s$ or other fixed parameters,
its value may change from line to line. 

\subsection{Spherical harmonics} 
We follow the notation in \cite{DaiX}. Let $\CP_n^d$ denote the space of homogeneous polynomials of degree 
$n$ in $d$ variables. It is known that 
$$
      \dim \CP_n^d  = \binom{n+d-1}{n}. 
$$
Harmonic polynomials of $d$-variables are polynomials in $\CP_n^d$ that satisfy the Laplace equation 
$\Delta Y = 0$. Spherical harmonics are the restriction of harmonic polynomials on the unit sphere. 
Let $\mathcal{H}_n^d$ denote the space of spherical harmonic polynomials of degree $n$. It is well--known 
that
$$
         a_n^d: = \dim \mathcal{H}_n^d = \binom{n+d-1}{n} - \binom{n+d-3}{n-2}.
$$
If $Y \in \mathcal{H}_n^d$, then $Y(x) = \rho^n Y(\xi)$ in spherical--polar coordinates $x = \rho \xi$. We call 
$Y(x)$ a solid spherical harmonic. Evidently, $Y$ is uniquely determined by its restriction on the sphere.
We shall also use $\CH_n^d$ to denote the space of solid spherical harmonics. 

The spherical harmonics of different degrees are orthogonal with respect to the inner product
$$
        \la f, g \ra_{\sph}: =  \frac{1}{\o_d}\int_{\sph} f(\xi) g(\xi) d\s(\xi),
$$
where $d \o$ is the surface measure and $\o_{d}={2\pi^{\frac{d}{2}}}/{\Gamma(\frac{d}{2})}$ is the surface
area; the inner product is normalized so that $\la 1,1 \ra_{\sph}=1$.

In spherical polar coordinates, the Laplace operator can be written as 
\begin{equation} \label{eq:Delta}
   \Delta = \frac{d^2}{d\rho^2} + \frac{d-1}{\rho} \frac{d}{d\rho} + \frac{1}{\rho^2} \Delta_0,
\end{equation}
where $\rho = \|x\|$ and $\Delta_0$, the spherical part of $\Delta$, is the Laplace-Beltrami operator that has spherical
harmonics as eigenfunctions; more precisely, for $n = 0,1,2,\ldots$, 
\begin{equation} \label{eq:LaplaceBeltrami}
        \Delta_0  Y = - n(n+d-2) Y, \qquad Y \in \CH_n^d. 
\end{equation}

Let $f \in L^2(\sph)$ and let $\{Y_\ell^n: 1 \le \ell \le a_n^d\}$ be an orthonormal basis of $\CH_n^d$ such that
$\la Y_\ell^n, Y_\iota^n\ra_{\sph}=\delta_{\ell,\iota}$.
The spherical harmonic expansion of $f$ is defined by 
$$
   f(\xi) = \sum_{n=0}^\infty \sum_{\ell=1}^{a_n^d} \wh f_\ell^n Y_\ell^n(\xi), 
      \qquad \wh f_\ell^n = \la f, Y_\ell^n \ra_{\sph}.  
$$
We define the partial sum of the harmonic expansion and the projection operator 
$ \proj_n^\CH : L^2(\sph) \mapsto \CH_n^d$ by 
\begin{equation}\label{eq:SnH}
  S_n^\CH f(\xi) : = \sum_{m=0}^n \proj_m^\CH f (\xi) \quad\hbox{and}\quad  
     \proj_m^\CH f (\xi) : = \sum_{\ell=1}^{a_m^d} \wh f_\ell^m Y_\ell^m(\xi),
\end{equation}
respectively. The projection operator is independent of the choice of orthonormal basis of $\CH_n^d$. 
Furthermore, since $\proj_m^\CH f$ is homogeneous, we can extend its definition to the unit ball by 
$\proj_m^\CH f (x) = \rho^m \proj_m^\CH f (\xi)$ for $x = \rho \xi \in \ball$. We extend $S_n^\CH f$ accordingly. If 
$h$ is a harmonic function on the unit ball, then $S_n^\CH h$ is the best approximation to $h$ in $L^2(\ball)$. 

\subsection{Orthogonal structure on the unit ball}
Our basic reference in this section is \cite{DX}. For $\mu \in \RR$, let $\varpi_{\mu}$ be the weight function defined by
$$
    \varpi_{\mu}(x) = (1-\|x\|^2)^\mu, \qquad  \|x\| < 1.
$$
The classical orthogonal polynomials on the unit ball are orthogonal with respect to the inner product
\begin{equation}\label{ball-ip}
   \la f,g \ra_\mu = \frac{1}{b_{d}^{\mu}}\int_{{\ball}}\, f(x)\, g(x) \, \varpi_{\mu}(x) \, dx,
   \quad \mu>-1.
\end{equation}
where $b_{d}^{\mu}=\frac{\pi^{\frac{d}{2}}\Gamma(\mu+1)}{\Gamma(\mu+\frac{d}{2}+1)}$ is the normalization constant such that 
$\la 1,1 \ra_\mu=1$. For clarity,  we write $\la f, g\ra_{\ball}=\la f, g\ra_{0}$.

Let $\Pi^d$ denote the space of polynomials in $d$ real variables. For $n = 0,1,2,\ldots,$ let $\Pi_n^d$ denote
the linear space of polynomials in $d$ variables of (total) degree at most $n$. A polynomial $P \in \Pi_n^d$ is 
called orthogonal with respect to $\varpi_{\mu}$ on the ball if $\la P, Q\ra_\mu =0$ for all $Q \in \Pi_{n-1}^d$. Let 
$\CV_n^d(\varpi_{\mu})$ denote the space of orthogonal polynomials of total
degree $n$ with respect to $\varpi_{\mu}$. It is well--known that 
$$
  \dim \Pi_n^d = \binom{n+d}{n} \quad\hbox{and} \quad \dim \CV_n^d(\varpi_{\mu}) = \binom{n+d-1}{n}.
$$ 
The space of $\CV_n^d$ has many different bases. Let $\{P^n_{\alpha}(x) : |\alpha|=n\}$ denote a basis of 
$\CV_n^d(\varpi_{\mu})$, then $\la P_\a^n, P_\b^m \ra_{\mu} =0$ if $n \ne m$. The basis is called mutually
orthogonal if $\la P_\a^n, P_\b^n \ra_{\mu} = 0$ whenever $\a \ne \b$, and it is called orthonormal if 
$\la P_\a^n, P_\a^n \ra_{\mu} =1$ in addition. Let $(a)_n:= a(a+1) \ldots (a+n-1)$ be the Pochhammer symbol.
We use the standard  multi--index notation that, for $\a \in \NN_0^d$, 
$$
\a! = \a_1!\cdots \a_d!, \quad \hbox{and} \quad (\a)_\g = (\a_1)_{\g_1}\cdots (\a_d)_{\g_d}.
$$

One basis of $\CV_n^d(\varpi_{\mu})$ is given in terms of the Jacobi polynomials and spherical  harmonics. 
Let $P_j^{(\mu,\nu)}(t)$ denote the usual Jacobi orthogonal polynomial of degree $j$
with respect to weight function $(1-t)^{\mu}(1+t)^{\nu}$  on $[-1,1]$.

\begin{prop}
For $n \in \NN_0$ and $0 \le j \le \tfrac{n}{2}$, let $\{Y_\ell^{n-2j}: 1\le \ell\le a_{n-2j}^d\}$ be an 
orthonormal basis for $\mathcal{H}_{n-2j}^d$. Define 
\begin{equation}\label{baseP}
P_{j,\ell}^{\mu,n}(x) := 
\frac{(n-j+\tfrac{d}{2})_j}{(n-j+\tfrac{d}{2}+\mu)_j}  P_{j}^{(\mu, n-2j + \frac{d-2}{2})}(2\,\|x\|^2 -1)\, Y_\ell^{n-2j}(x).\end{equation}
Then the set $\{P_{j,\ell}^{\mu,n}(x): 0 \le j \le \tfrac{n}{2}, \,1 \le \ell \le a_{n-2j}^d \}$ is a mutually
orthogonal basis of $\CV_n^d(\varpi_{\mu})$ whenever $\mu>-1$. More precisely, 
$$
\la P_{j,\ell}^{\mu,n}, P_{k,\iota}^{\mu,m}\ra_\mu =  h_{j,n}^{\mu}  \delta_{n,m}\,\delta_{j,k}\,\delta_{\ell,\iota},
$$
where $ h_{j,n}^{\mu}$ is given by 
\begin{equation} \label{eq:Hjn-mu}
 h_{j,n}^{\mu}: = %\frac{ \Gamma(j+\mu +1)  (\frac{d}2)_n} {\Gamma(j+1)  \Gamma(n+\tfrac{d}{2}+\mu+1)}\frac{ (n-j+ \tfrac{d}{2})_j}{  (n-j+ \tfrac{d}{2}+\mu)_j}.
 \frac{ (\mu +1)_j   (1-n-\tfrac{d}{2})_j (\frac{d}2)_n  } {j! (1-n-\tfrac{d}{2}-\mu)_j (\tfrac{d}{2}+\mu+1)_n  }.
\end{equation} 
\end{prop}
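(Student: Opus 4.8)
The plan is to reduce the assertion to two classical orthogonality facts---that spherical harmonics of different degrees are orthogonal on $\sph$, and that the Jacobi polynomials are orthogonal on $[-1,1]$---by passing to the polar decomposition $x=r\xi$, $r\in[0,1]$, $\xi\in\sph$, in which $dx=r^{d-1}\,dr\,d\sigma(\xi)$. First I would record that $P_{j,\ell}^{\mu,n}$ is genuinely a polynomial of degree $n$: since $Y_\ell^{n-2j}$ is homogeneous of degree $n-2j$ and $P_j^{(\mu,\,\cdot\,)}(2\|x\|^2-1)$ is a polynomial of degree $2j$ in $x$, the product lies in $\Pi_n^d$ with leading form a nonzero multiple of $\|x\|^{2j}Y_\ell^{n-2j}(x)$. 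Combined with the harmonic decomposition $\CP_n^d=\bigoplus_{0\le j\le n/2}\|x\|^{2j}\CH_{n-2j}^d$, this also shows the displayed family has the right cardinality, $\sum_{0\le j\le n/2}a_{n-2j}^d=\binom{n+d-1}{n}=\dim\CV_n^d(\varpi_\mu)$.

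To prove the orthogonality relation, insert $x=r\xi$ into $\la P_{j,\ell}^{\mu,n},P_{k,\iota}^{\mu,m}\ra_\mu$. The integrand factors as a radial part times $Y_\ell^{n-2j}(\xi)\,Y_\iota^{m-2k}(\xi)$, so the angular integral equals $\o_d\,\la Y_\ell^{n-2j},Y_\iota^{m-2k}\ra_{\sph}$, which vanishes unless $n-2j=m-2k$ and, by orthonormality of the chosen harmonic bases, equals $\o_d\,\delta_{\ell,\iota}$ when $n-2j=m-2k$. Assuming $n-2j=m-2k=:\nu$ and substituting $t=2r^2-1$ (so $r^{d-1+2\nu}\,dr=c\,(1+t)^{\nu+\frac{d-2}{2}}\,dt$ and $(1-r^2)^\mu=2^{-\mu}(1-t)^\mu$), the remaining radial integral becomes a constant times $\int_{-1}^{1}P_j^{(\mu,\nu+\frac{d-2}{2})}(t)\,P_k^{(\mu,\nu+\frac{d-2}{2})}(t)\,(1-t)^\mu(1+t)^{\nu+\frac{d-2}{2}}\,dt$, the Jacobi orthogonality integral, which vanishes unless $j=k$. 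Hence $\la P_{j,\ell}^{\mu,n},P_{k,\iota}^{\mu,m}\ra_\mu=0$ unless $n=m$, $j=k$ and $\ell=\iota$. Mutual orthogonality then forces linear independence; and to see that each $P_{j,\ell}^{\mu,n}$ is orthogonal to $\Pi_{n-1}^d$, I would expand an arbitrary $Q\in\Pi_{n-1}^d$ into terms $\|x\|^{2a}Y(x)$ with $Y\in\CH_b^d$, $2a+b\le n-1$, and rerun the polar computation: the angular part forces $b=n-2j$, whence $a\le j-1$, and the radial integral then pairs $P_j^{(\mu,\nu+\frac{d-2}{2})}$ against the polynomial $(1+t)^a$ of degree $a<j$, giving zero. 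With the cardinality count above, this establishes the basis claim.

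The last task is to evaluate the constant when $n=m$, $j=k$, $\ell=\iota$, which is where the actual work lies, though it is entirely routine. Here I would substitute the classical Jacobi norm
$$
\int_{-1}^{1}(1-t)^\alpha(1+t)^\beta\bigl[P_j^{(\alpha,\beta)}(t)\bigr]^2\,dt=\frac{2^{\alpha+\beta+1}}{2j+\alpha+\beta+1}\cdot\frac{\Gamma(j+\alpha+1)\,\Gamma(j+\beta+1)}{\Gamma(j+\alpha+\beta+1)\,j!}
$$
with $\alpha=\mu$, $\beta=\nu+\frac{d-2}{2}$, then gather the powers of $2$ from the change of variable, the normalizing constants $\o_d$ and $b_d^\mu$ (recalling $\la 1,1\ra_\mu=\la 1,1\ra_{\sph}=1$, so that $\o_d/b_d^\mu=2\Gamma(\mu+\tfrac d2+1)/(\Gamma(\tfrac d2)\Gamma(\mu+1))$), and the square of the prefactor $\frac{(n-j+d/2)_j}{(n-j+d/2+\mu)_j}$ in \eqref{baseP}---which is irrelevant to orthogonality and serves only to normalize the leading coefficient---and rewrite everything with $\nu=n-2j$. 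Two simplifications make the match with \eqref{eq:Hjn-mu} transparent: $2j+\mu+\nu+\tfrac d2=n+\mu+\tfrac d2$, and the elementary identity $(1-a-j)_j=(-1)^j(a)_j$, which converts the two ``shifted'' Pochhammer symbols $(1-n-\tfrac d2)_j$ and $(1-n-\tfrac d2-\mu)_j$ of \eqref{eq:Hjn-mu} into the Gamma quotients $\Gamma(n+\tfrac d2)/\Gamma(n-j+\tfrac d2)$ and $\Gamma(n+\tfrac d2+\mu)/\Gamma(n-j+\tfrac d2+\mu)$ produced by the prefactor; matching the leftover factors against $(\tfrac d2)_n$ and $(\tfrac d2+\mu+1)_n$ then finishes the identification. \emph{The only point demanding care} is the consistent tracking of all normalizations; there is no conceptual obstacle, and the whole computation is valid precisely in the range $\mu>-1$ where $\la\cdot,\cdot\ra_\mu$ is well defined, in agreement with \cite{DX}.
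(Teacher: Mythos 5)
Your proof is correct and is precisely the standard separation-of-variables argument (polar decomposition reducing to spherical-harmonic orthogonality on $\sph$ plus Jacobi orthogonality in the radial variable, followed by a dimension count and the norm computation via \eqref{OrtJacP}); the paper gives no proof of this proposition, citing it as standard from \cite[p.~39]{DX}, where exactly this argument appears. The only remark worth adding is that the extra prefactor $\tfrac{(n-j+d/2)_j}{(n-j+d/2+\mu)_j}$, which you correctly note is irrelevant to orthogonality, is included by the authors specifically to make the definition extend to $\mu\le -1$ (Appendix A), so your verification that it squares into $h_{j,n}^\mu$ as stated is the part that actually needs checking here, and it does check out.
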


This is a standard mutually orthogonal basis on the unit ball; see \cite[p. 39]{DX}. We include a
constant in the definition of $P_{j,\ell}^{\mu,n}(x)$ in order to extend this definition to the case
of $\mu \le -1 $, which is explained in Appendix \ref{Gball}. 

It is known that orthogonal polynomials with respect to $\varpi_{\mu}$ are eigenfunctions of a second order
differential operator $\CD_\mu$. More precisely, we have
\begin{align} \label{eq:Bdiff}
    \CD_\mu P =  -(n+d) (n + 2 \mu)P, \qquad \forall P \in \CV_n^d(\varpi_{\mu}), 
\end{align} 
where 
\begin{equation*}
  \CD_\mu := \Delta  - \sum_{j=1}^d \frac{\partial}{\partial x_j} x_j \left[
  2 \mu  + \sum_{i=1}^d x_i \frac{\partial  }
  {\partial x_i} \right].
\end{equation*}

In term of the mutually orthogonal basis $\{P_{j,\ell}^{\mu,n} :1 \le \ell \le a_{n-2j}^d, 0 \le j \le n/2, n =0,1,\ldots \}$,
the Fourier orthogonal expansion of $f \in L^2(\varpi_{\mu}, \BB^d)$ is defined by 
$$
   f(x) = \sum_{n=0}^\infty\sum_{0\le j \le n/2} \sum_{\ell=1}^{a_{n-2j}^d} \wh f_{j,\ell}^n P_{j,\ell}^{\mu,n}, 
         \quad\hbox{where}\quad  \wh f_{j,\ell}^n : =\frac{1}{h_{j,n}^\mu} \la f, P_{k,\ell}^{\mu,n} \ra_{\mu}.  
$$
We define the partial sum of the orthogonal expansion and the projection operator $\proj_n^\mu: 
L^2(\varpi_{\mu}, \BB^d) \mapsto \CV_n^d(\varpi_\mu)$ by
\begin{equation}\label{eq:Sn-mu}
  S_n^\mu f(x) : = \sum_{m=0}^n \proj_m^\mu f (x) \quad\hbox{and}\quad  
     \proj_m^\mu f (x) : =\sum_{0\le j \le m/2}   \sum_{\ell =1}^{a_{m-2j}^d}  \wh f_{j,\ell}^m P_{j,\ell}^{\mu,m},
\end{equation}
respectively. By definition, $S_n^\mu $ is the orthogonal projection of $L^2(\varpi_{\mu}, \ball)$ onto $\Pi_n^d$; 
that is, $S_n^\mu f = f$ if $f \in \Pi_n^d$ and 
\begin{align*}
   \la S_n^\mu f- f, v\ra_{\mu} = 0, \ \  \forall v\in \Pi_n^d.
\end{align*}

\subsection{Fourier orthogonal expansions and approximation}

For $1 \le p < \infty$, let $\|f\|_{p, \sph}$ denote the $L^p(\sph)$ norm 
$$
  \| f \|_{p,\sph} : = \left( \frac1{\o_d} \int_{\sph} |f(\xi)|^p  d\s(\xi) \right)^{1/p},  
$$
and let $\|f\|_{\infty, \sph} = \|f\|_{\infty}$ be the uniform norm for $f \in C(\sph)$. Furthermore, 
for $1 \le p < \infty$, let $\|f\|_{p,\mu}$ denote the $L^p(\varpi_{\mu},\BB^d)$ norm 
$$
  \| f \|_{p,\mu} : = \left( \frac1{b^\mu_d} \int_{\ball} |f(x)|^p \varpi_{\mu}(x) dx \right)^{1/p},  
$$
and let $\|f\|_{\infty, \mu} = \|f\|_{\infty}$ be the uniform norm for $f \in C(\ball)$. In the case of $\mu =0$,
we shall denote the norm by $\|f\|_{p,\ball}: = \|f\|_{p,0}$. 

In the remaining of this subsection, we write $L^p$  (resp. $\|f\|_p$) for either $ L^p(\varpi_{\mu}, \ball)$ or $L^p(\sph)$ (resp. $\|f\|_{p,\mu}$ or $\|f\|_{p,\sph}$), and 
write $\proj_n f$ and $S_n f$ for either $\proj_n^\CH f$ and $S_n^\CH f$ defined in \eqref{eq:SnH} or 
$\proj_n^\mu f$ and $S_n^\mu f$ defined in \eqref{eq:Sn-mu}. When the setting is on $\sph$, 
$\Pi_n^d = \Pi_n^d(\sph)$.

\begin{defn} \label{defn:BestApp}
Let $f \in L^p$ if $1 \le p < \infty$ and $f\in C$ if $p = \infty$. For $n \ge 0$, the error of the
best approximation to $f$ by polynomials of degree at most $n$ is defined by 
\begin{equation}\label{bestapp}
    E_n (f): = \inf_{ g  \in \Pi_n^d } \| f - g \|_{p}, \qquad 1 \le p \le \infty.
\end{equation}
With norm specified, we shall write $E_n(f)_{p,\mu}$, $E_n(f)_{p,\ball}$ and $E_n(f)_{p,\sph}$.
\end{defn}
 
The standard Hilbert space theory shows 
that $S_n f$ is the best $L^2$ approximation to $f$; that is, 
$$
       E_n (f)_{2} = \|f - S_n^\mu f \|_{2}
$$ 
For $p \ne 2$, we no longer know the polynomial of best approximation explicitly, but a near best approximation 
is known (see, for example, \cite[p. 284]{DaiX}). 

\begin{defn}\label{defn:Lnf}
A $C^\infty$-function $\eta$ on $[0,\infty)$ is called   an admissible cut-off function if $\eta(t)=1$ for  
$0\le t\le 1$ 
and $\eta(t)=0$ for $t \ge 2$. If $\eta$ is such a function, define
\begin{equation}\label{Vnf}
 S_{n,\eta} f(x):= \sum_{k=0}^{\infty} \eta\left(\f k{n}\right) \proj_n f(x). 
\end{equation}
When $\proj_n$ is specified, we will write $S_{n,\eta}^\CH f$ and $S_{n,\eta}^\mu f$ accordingly.  
\end{defn}

Since $\eta$ is supported on $[0,2)$, the summation in $ S_{n,\eta} f$ can be terminated at $k =2 n-1$,
so that $ S_{n,\eta} f$ is a polynomial of degree at most $2n-1$. It approximates $f$ as well as the best
approximation polynomial of degree $n$. 

\begin{thm} \label{thm:near-best}
Let $f \in L^p$ if $1 \le p < \infty$ and $f \in C$ if $p = \infty$. Then
\begin{enumerate}[  \quad \rm(1)]
 \item $ S_{n,\eta}  f \in \Pi_{2n-1}^d$ and $ S_{n,\eta} f = f $ for $f \in \Pi_n^d$.
 \item For $n \in \NN$, $\| S_{n,\eta}  f\|_{p} \le c \|f\|_{p}$.
 \item For $n \in \NN$, there is a constant $c > 0$, independent of $f$, such that 
\begin{equation}\label{eq:near-best}
        \|f -  S_{n,\eta}  f\|_{p} \le (1+c) E_{n}(f)_{p}.
\end{equation}
\end{enumerate}
\end{thm}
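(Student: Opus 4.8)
The plan is to reduce all three parts to standard facts about the $L^p$-boundedness of a single family of ``de la Vall\'ee Poussin'' type operators built from the projections $\proj_n$, which is the only nontrivial ingredient and is already available in the literature (both for $L^p(\varpi_\mu,\ball)$ and for $L^p(\sph)$; see \cite{DaiX}). Part (1) is purely formal: since $\eta(t)=1$ for $t\le 1$ and $\eta(t)=0$ for $t\ge 2$, the coefficient $\eta(k/n)$ equals $1$ for $0\le k\le n$ and vanishes for $k\ge 2n$, so $S_{n,\eta}f=\sum_{k=0}^{2n-1}\eta(k/n)\proj_k f$ lies in $\Pi_{2n-1}^d$, and if $f\in\Pi_n^d$ then $\proj_k f=0$ for $k>n$ while $\eta(k/n)=1$ for $k\le n$, whence $S_{n,\eta}f=\sum_{k=0}^n\proj_k f=f$.

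First I would establish part (2), the uniform bound $\|S_{n,\eta}f\|_p\le c\|f\|_p$ with $c$ independent of $n$ and $f$. For $p=2$ this is immediate from orthogonality and $|\eta|\le 1$, since $\|S_{n,\eta}f\|_2^2=\sum_k \eta(k/n)^2\|\proj_k f\|_2^2\le\sum_k\|\proj_k f\|_2^2=\|f\|_2^2$. For general $1\le p\le\infty$ one writes $S_{n,\eta}f$ as convolution (on $\ball$ with $\varpi_\mu$, or on $\sph$) against the kernel $K_{n,\eta}(x,y)=\sum_k \eta(k/n)P_k(x,y)$, where $P_k$ is the reproducing kernel of $\CV_k^d(\varpi_\mu)$ (resp. of $\CH_k^d$). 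Because $\eta$ is $C^\infty$ and compactly supported, summation-by-parts arguments give sharp pointwise decay estimates for $K_{n,\eta}$ away from the diagonal, which yield a uniform $L^1$ bound on the kernel and hence the $L^p$ bound for all $p$ by Young's inequality; these kernel estimates are exactly the content of the cited results, so I would simply invoke them rather than reprove them.

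Given part (2), part (3) follows by the classical near-best argument. Let $g\in\Pi_n^d$ be arbitrary; by part (1), $S_{n,\eta}g=g$, so
\begin{align*}
 \|f-S_{n,\eta}f\|_p
   &= \|(f-g)-S_{n,\eta}(f-g)\|_p\\
   &\le \|f-g\|_p + \|S_{n,\eta}(f-g)\|_p
    \le (1+c)\,\|f-g\|_p.
\end{align*}
Taking the infimum over $g\in\Pi_n^d$ gives $\|f-S_{n,\eta}f\|_p\le(1+c)E_n(f)_p$, which is \eqref{eq:near-best}. The same chain of inequalities works verbatim in each of the concrete settings $L^p(\varpi_\mu,\ball)$, $L^p(\sph)$, and (letting $p\to\infty$) $C$.

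The main obstacle is the $n$-uniformity of the bound in part (2): everything else is formal manipulation, but the constant $c$ in $\|S_{n,\eta}f\|_p\le c\|f\|_p$ must not depend on $n$, and this rests on delicate Ces\`aro/kernel estimates for orthogonal expansions on the ball (and on the sphere). I would handle it by citing the known boundedness of these smoothly truncated operators --- this is precisely why $\eta$ is required to be $C^\infty$ rather than a sharp cutoff, for which the analogous bound would fail for $p\ne 2$ --- and only sketch the kernel-estimate mechanism, since a full proof would duplicate material already in \cite{DaiX}.
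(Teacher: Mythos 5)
Your proposal is correct and is essentially the argument the paper relies on: the paper gives no proof of this theorem, citing \cite[p.~284]{DaiX}, and the standard proof there is exactly your combination of the formal part (1), the kernel/localization estimates for the smoothly truncated operator to get the uniform $L^p$ bound in (2), and the Lebesgue-type inequality $\|f-S_{n,\eta}f\|_p\le(1+c)\|f-g\|_p$ for (3). (One trivial caveat: the paper's definition of an admissible cut-off does not require $|\eta|\le 1$, so your $p=2$ computation gives $\|S_{n,\eta}f\|_2\le\|\eta\|_\infty\|f\|_2$ rather than $\le\|f\|_2$, which is still all that part (2) asserts.)
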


The quantity $E_n(f)_{p,\sph}$ and $E_n(f)_{p,\mu}$ can be characterized by the smoothness of the function $f$; 
see \cite{DX10, DX11} and Section 4 below. 

\subsection{Best approximation on the unit sphere}

We recall result on the characterization of best approximation by polynomials in $L^p(\sph)$ in terms of the 
smoothness of the functions. In approximation theory, smoothness of 
a function is usually measured by the modulus of smoothness and its equivalent $K$-functional. 
Since we are primarily interested in functions in Sobolev spaces, we shall state the result only in 
terms of $K$-functional. 

For $s = 0, 1,2, \ldots$ and $1 \le p < \infty$, we define the Sobolev space $W_p^s(\sph)$ to be 
the space of functions whose spherical/angular derivatives up to $s$-th order are all in $L^p(\sph)$. For $p =\infty$, 
we replace $L^p$ space by the space $C(\sph)$ of continuous functions on $\sph$.  The norm 
and semi-norm of  $W_p^s(\sph)$ can be defined by  
\begin{align}\label{Snom1S}
   &\|f\|_{W_p^s(\sph)} : =  \|f\|_{p,\sph}  + |f|_{W_p^s(\sph)}^\circ,
\quad |f|_{W_p^s(\sph)}^\circ: = \sum_{1 \le i<j \le d} \|D_{i,j}^s g\|_{p,\sph},
\end{align}
where $D_{i,j}:= x_i \partial_j - x_j \partial_i,\ 1 \le i< j \le d$  are angular differential operators. 
In polar coordinates on the plane $(x_i,x_j) = r_{i,j} (\cos \t_{i,j}, \sin \t_{i,j})$, 
$D_{i,j} = \frac{\partial}{\partial \t_{i,j}}$, which explains their name; see \cite[Section 1.8]{DaiX}
for further properties of these operators.

\begin{defn}  \label{def:K-funcS}
Let  $f\in  L^p(\sph)$ if $1\le p<\infty$ and $f\in C(\sph)$ if $p=\infty$.  For $s\in \NN_0$ an $t\ge 0$, define the K-functional 
\begin{equation} \label{eq:K-func-sphere}
   K_{s}(f,t)_{p, \sph} : = \inf_{g\in W^s_p(\sph)} \left\{ \|f-g\|_{p,\sph} + t^s | g|_{W_p^{s}(\sph)}^{\circ} \right\}. 
\end{equation}
\end{defn}

This definition and the characterization of best approximation below were established in \cite{DX10}, where an
equivalent modulus of smoothness was also defined. 

\begin{thm} \label{thm:JacksonS}
Let $s \in \NN$ and let $f \in L^p(\sph)$ if $1 \le p < \infty$, and $f\in C(\sph)$ if $p=\infty$. Then
\begin{equation}\label{JacksonS}
   E_n (f)_{p,\sph} \le c\, K_s(f,n^{-1})_{p,\sph} 
\end{equation}
and
\begin{equation} \label{inverseS}
 K_{s} (f,n^{-1})_{p,\sph} \leq  c\, n^{-s} \sum_{k=1}^n k^{{s}-1} E_{k}(f)_{p,\sph}.
\end{equation}
\end{thm}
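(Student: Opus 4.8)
The plan is to prove the direct and inverse estimates \eqref{JacksonS} and \eqref{inverseS} as a matched pair, exploiting the fact that the angular operators $D_{i,j}$ act nicely on spherical harmonics and commute with the projections $\proj_n^\CH$. First I would establish the direct (Jackson) inequality \eqref{JacksonS}. Fix $g \in W_p^s(\sph)$ arbitrary. The key structural fact is that for each pair $i<j$ one has $\proj_n^\CH (D_{i,j} g) = D_{i,j} \proj_n^\CH g$, since $D_{i,j}$ maps $\CH_n^d$ into itself (equivalently, $D_{i,j}$ commutes with the Laplace--Beltrami operator $\Delta_0$, which follows from \eqref{eq:LaplaceBeltrami} together with the fact that $D_{i,j}$ preserves homogeneity and harmonicity). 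Consequently, writing $\tau_n := f - S_n^\CH f$, one controls $E_n(f)_{p,\sph} \le \|f - g\|_{p,\sph} + \|g - S_n^\CH g\|_{p,\sph}$ and must bound the second term. Using a Bernstein-type / reverse-Bernstein device on the tail of the harmonic expansion of $g$ — or more cleanly, using the near-best operator $S_{n,\eta}^\CH$ from Theorem~\ref{thm:near-best} and the boundedness $\|S_{n,\eta}^\CH h\|_{p,\sph}\le c\|h\|_{p,\sph}$ — one reduces $\|g - S_{n,\eta}^\CH g\|_{p,\sph}$ to $c\, n^{-s}\sum_{i<j}\|D_{i,j}^s g\|_{p,\sph} = c\, n^{-s}|g|_{W_p^s(\sph)}^\circ$, because applying $D_{i,j}^s$ effectively supplies $s$ factors of the ``frequency'' $n$ that one can trade against the $n^{-s}$. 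Taking the infimum over $g$ then gives $E_n(f)_{p,\sph}\le c\, K_s(f,n^{-1})_{p,\sph}$.

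For the inverse inequality \eqref{inverseS}, I would run the standard dyadic telescoping argument. Choose near-best polynomials: let $P_k$ be the polynomial of degree $\le k$ attaining (up to a constant) $E_k(f)_{p,\sph}$, so that $\|f - P_k\|_{p,\sph}\le c\, E_k(f)_{p,\sph}$. To bound $K_s(f,n^{-1})_{p,\sph}$ one uses the particular competitor $g = P_n$: then $\|f - P_n\|_{p,\sph}\le c\, E_n(f)_{p,\sph}$ handles the first term of the $K$-functional, and it remains to estimate the semi-norm $|P_n|_{W_p^s(\sph)}^\circ = \sum_{i<j}\|D_{i,j}^s P_n\|_{p,\sph}$. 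Telescoping $P_n = P_1 + \sum_{m}(P_{2^{m+1}} - P_{2^m})$ over dyadic blocks up to $2^{M}\sim n$, each difference $P_{2^{m+1}}-P_{2^m}$ is a polynomial of degree $\le 2^{m+1}$ with $\|P_{2^{m+1}}-P_{2^m}\|_{p,\sph}\le c\, E_{2^m}(f)_{p,\sph}$, and the Bernstein inequality on the sphere gives $\|D_{i,j}^s Q\|_{p,\sph}\le c\, (\deg Q)^s \|Q\|_{p,\sph}$ for any polynomial $Q$. Summing $\sum_m (2^{m+1})^s E_{2^m}(f)_{p,\sph}$ and converting the dyadic sum back to the full sum $\sum_{k=1}^n k^{s-1}E_k(f)_{p,\sph}$ by monotonicity of $E_k$ (so each block contributes comparably to $\sum_{2^{m}\le k<2^{m+1}} k^{s-1}E_k(f)_{p,\sph}$), multiplying by the $t^s = n^{-s}$ weight, yields exactly \eqref{inverseS}.

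The two ingredients I would need to pin down carefully are: (i) the Bernstein inequality $\|D_{i,j}^s Q\|_{p,\sph}\le c\, (\deg Q)^s\|Q\|_{p,\sph}$ valid for all $1\le p\le\infty$ — this is the genuinely nontrivial tool; it is not elementary for $p\ne 2$ and should be quoted from the Littlewood--Paley / multiplier theory on the sphere (e.g. \cite{DaiX}), via the fact that the operators $D_{i,j}^s(\text{degree-reduction})$ arise from Cesàro-bounded multiplier operators; and (ii) the commutation/mapping property of $D_{i,j}$ with harmonic projections used in the direct half. With these in hand everything else is the routine telescoping bookkeeping sketched above. I expect the main obstacle to be not the telescoping but the clean verification of the vector-valued/mixed Bernstein estimate for the family $\{D_{i,j}\}_{i<j}$ simultaneously (so that the semi-norm $|\cdot|_{W_p^s(\sph)}^\circ$, a sum over pairs, is controlled with a single constant independent of $n$); this is exactly the kind of fact that \cite{DX10} sets up, so I would cite it rather than reprove it, and the write-up would then be short.
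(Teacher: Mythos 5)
The first thing to say is that the paper does not prove this theorem: it is imported verbatim from \cite{DX10} (``This definition and the characterization of best approximation below were established in \cite{DX10}''), so there is no in-paper proof to compare against. Your outline reproduces the standard direct/inverse machinery that \cite{DX10} itself uses. The inverse half \eqref{inverseS} is essentially complete as you describe it: take near-best polynomials $P_{2^m}$, telescope over dyadic blocks, apply the Bernstein inequality $\|D_{i,j}^s Q\|_{p,\sph}\le c(\deg Q)^s\|Q\|_{p,\sph}$ to each block, and convert the dyadic sum to $\sum_{k=1}^n k^{s-1}E_k(f)_{p,\sph}$ by monotonicity of $E_k$. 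The Bernstein inequality for $D_{i,j}$ is a genuinely separate, previously established tool (it is in \cite{DaiX}), so citing it is legitimate and the rest is bookkeeping.

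The direct half, however, has a real gap. The whole content of \eqref{JacksonS} is the Jackson estimate $E_n(g)_{p,\sph}\le c\,n^{-s}|g|^{\circ}_{W_p^s(\sph)}$ for $g\in W_p^s(\sph)$ (i.e.\ Corollary \ref{cor:JacksonS}); once that is in hand, adding $\|f-g\|_{p,\sph}$ and taking the infimum over $g$ is one line. Your justification of this estimate --- that applying $D_{i,j}^s$ ``supplies $s$ factors of the frequency'' to be traded against $n^{-s}$ --- does not work operator by operator: each individual $D_{i,j}$ has a nontrivial kernel on $\CH_k^d$ (it annihilates, for instance, harmonics not depending on $x_i,x_j$), so $D_{i,j}^s$ cannot be inverted on $\proj_k^\CH g$ and no single term $\|D_{i,j}^s g\|_{p,\sph}$ controls the tail $\sum_{k>n}(1-\eta(k/n))\proj_k^\CH g$. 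One must use the whole family at once, via $-\Delta_0=\sum_{i<j}D_{i,j}^2$, to pass from the semi-norm $\sum_{i<j}\|D_{i,j}^s g\|_{p,\sph}$ to a bound involving $(-\Delta_0)^{s/2}g$, and then invoke a Marcinkiewicz-type multiplier theorem on the sphere to obtain $\|g-S_{n,\eta}^{\CH}g\|_{p,\sph}\le c\,n^{-s}\|(-\Delta_0)^{s/2}g\|_{p,\sph}$ in $L^p$ for $p\ne 2$. You explicitly defer exactly this step to \cite{DX10} --- but \cite{DX10} is the source of the theorem you are proving, so as written the direct half is a citation of the result rather than a proof of it. To make the argument self-contained you would need to supply the multiplier estimate and the equivalence between the $\{D_{i,j}\}$ semi-norm and the fractional Laplace--Beltrami norm, which is precisely the nontrivial analytic content of \cite{DX10}.
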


The estimate \eqref{JacksonS} is usually called direct, or Jackson, inequality, while \eqref{inverseS} is usually
called inverse inequality. If $f \in W_p^{s}(\sph)$, then we can choose $g = f $ in the infimum of 
$K$-functional, which gives the following corollary. 

\begin{cor} \label{cor:JacksonS}
Let ${s}\in \NN$ and let $f \in W_p^{s}(\sph)$ if $1 \le p < \infty$, and $f\in C(\sph)$ if $p=\infty$. Then
\begin{equation}\label{JacksonS2}
   E_n (f)_{p,\sph}  \le c\, n^{-s}   |f|_{W_p^s(\sph)}^\circ \le c\, n^{-s}   \|f\|_{W_p^s(\sph)}.    
\end{equation}
\end{cor}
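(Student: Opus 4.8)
The plan is to obtain this as an immediate specialization of \thmref{thm:JacksonS} combined with the very definition of the $K$-functional in \eqref{eq:K-func-sphere}. The key observation is that when $f$ itself already belongs to $W_p^s(\sph)$, it is an admissible competitor in the infimum defining $K_s(f,t)_{p,\sph}$. So the first step is to take $g = f$ in \eqref{eq:K-func-sphere}: the term $\|f-g\|_{p,\sph}$ then vanishes, leaving
$$
   K_s(f,t)_{p,\sph} \le t^s\, |f|_{W_p^s(\sph)}^\circ, \qquad t \ge 0.
$$

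The second step is to set $t = n^{-1}$ and feed this bound into the Jackson inequality \eqref{JacksonS}, which gives $E_n(f)_{p,\sph} \le c\, K_s(f,n^{-1})_{p,\sph} \le c\, n^{-s} |f|_{W_p^s(\sph)}^\circ$, with $c$ the constant furnished by \eqref{JacksonS} (independent of $f$ and $n$). That is exactly the first asserted inequality. For the second inequality one simply invokes the definition of the full Sobolev norm in \eqref{Snom1S}: since $\|f\|_{W_p^s(\sph)} = \|f\|_{p,\sph} + |f|_{W_p^s(\sph)}^\circ$ and both summands are nonnegative, $|f|_{W_p^s(\sph)}^\circ \le \|f\|_{W_p^s(\sph)}$, whence $c\, n^{-s}|f|_{W_p^s(\sph)}^\circ \le c\, n^{-s}\|f\|_{W_p^s(\sph)}$. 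The case $p=\infty$ is handled identically, reading $C(\sph)$ for $L^p(\sph)$ throughout.

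I do not expect any genuine obstacle here: the statement is a routine corollary, and the entire content is already packaged in \thmref{thm:JacksonS}. The only point meriting a word of care is the bookkeeping of the constant — it is precisely the $c$ from the direct inequality \eqref{JacksonS}, hence independent of $f$ and $n$ as claimed — and the trivial remark that monotonicity of the infimum in its feasible set is what legitimizes substituting $g = f$. If one wished, the same one-line argument also shows that $E_n(f)_{p,\sph} = o(n^{-s})$ is false in general but that the rate $n^{-s}$ is uniform over balls of $W_p^s(\sph)$, which is the form in which the estimate will be used later when deriving the spectral-approximation bounds on $\ball$.
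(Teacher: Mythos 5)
Your argument is correct and coincides with the paper's own reasoning: the corollary is obtained by taking $g=f$ in the infimum defining the $K$-functional \eqref{eq:K-func-sphere} and substituting the resulting bound $K_s(f,n^{-1})_{p,\sph}\le n^{-s}|f|_{W_p^s(\sph)}^\circ$ into the Jackson inequality \eqref{JacksonS}, with the second inequality following from the definition \eqref{Snom1S} of the norm. Nothing further is needed.
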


We will also need an estimate in the fractional order Sobolev space $W^{r+\t}_p(\sph)$, where $0 < \t < 1$, 
which is defined as the interpolation space $(W^{s}_p(\sph), W^{s+1}_p(\sph))_{\theta,p}$; see Appendix B. 

\begin{thm}
\label{th:approxHarm}
If $f\in W^{r+\t}_p(\sph)$  for  $r\in \NN_0$, $0\le \t <1$ and $1<p<\infty$, then
\begin{align}
\label{eq:approxHarm}
\|f-S_{n,\eta}^{\CH} f\|_{W^{s+\t}_p(\sph)} \le c n^{-r+s} \|f\|_{W^{r+\t}_p(\sph)}, 
\quad  s=0,1,\dots,r.
\end{align}
\end{thm}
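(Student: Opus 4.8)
The plan is to derive \eqref{eq:approxHarm} from the integer-order Jackson and near-best estimates already available (Corollary~\ref{cor:JacksonS} and Theorem~\ref{thm:near-best}) by a two-step interpolation argument. First I would establish the claim for integer smoothness, i.e. $\t=0$: for $f\in W_p^r(\sph)$ and $0\le s\le r$, I want $\|f-S_{n,\eta}^\CH f\|_{W_p^s(\sph)}\le c\,n^{-r+s}\|f\|_{W_p^r(\sph)}$. The key observation is that the operators $D_{i,j}$ commute with the projection $\proj_n^\CH$: since $D_{i,j}$ maps $\CH_n^d$ into itself (the $D_{i,j}$ generate the rotation action and spherical harmonics of a fixed degree form an invariant subspace), one has $D_{i,j}^s S_{n,\eta}^\CH f = S_{n,\eta}^\CH (D_{i,j}^s f)$, and likewise $D_{i,j}^s$ commutes with $\proj_m^\CH$. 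Hence $|f-S_{n,\eta}^\CH f|_{W_p^s(\sph)}^\circ = \sum_{i<j}\|D_{i,j}^s f - S_{n,\eta}^\CH(D_{i,j}^s f)\|_{p,\sph}$, and since $D_{i,j}^s f \in W_p^{r-s}(\sph)$ with $|D_{i,j}^s f|_{W_p^{r-s}(\sph)}^\circ \le c\,|f|_{W_p^r(\sph)}^\circ$, the near-best property \eqref{eq:near-best} together with \eqref{JacksonS2} gives $\|D_{i,j}^s f - S_{n,\eta}^\CH(D_{i,j}^s f)\|_{p,\sph}\le (1+c)E_n(D_{i,j}^s f)_{p,\sph}\le c\,n^{-(r-s)}\|D_{i,j}^s f\|_{W_p^{r-s}(\sph)}$. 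Combining with the $s=0$ bound for $\|f-S_{n,\eta}^\CH f\|_{p,\sph}$ and summing over $i<j$ yields the integer-order estimate. (One must check that a power $D_{i,j}^s$ applied to $f$ really lands in $W_p^{r-s}$; this follows from the definition of the spherical Sobolev norm via the $D_{i,j}$ and the fact that these operators, while not commuting, generate an algebra in which iterated brackets are again first-order operators of the same type, so a routine induction controls all $D$-derivatives of $D_{i,j}^s f$ of order $\le r-s$ by those of $f$ of order $\le r$.)

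Second, I would promote this to fractional smoothness by interpolation. Fix $r\in\NN_0$, $0<\t<1$, $0\le s\le r$. The operator $T_n := I - S_{n,\eta}^\CH$ is bounded (uniformly in $n$) as a map $W_p^{m}(\sph)\to W_p^{k}(\sph)$ with operator norm $\le c\,n^{-(m-k)}$ for every pair of integers $0\le k\le m$ with $m\ge r$-ish; in particular it is bounded $W_p^{r}\to W_p^{s}$ with norm $\le c\,n^{-(r-s)}$ and $W_p^{r+1}\to W_p^{s+1}$ with norm $\le c\,n^{-(r-s)}$. By the definition of the fractional spaces as real interpolation spaces $W_p^{r+\t}(\sph)=(W_p^r(\sph),W_p^{r+1}(\sph))_{\t,p}$ and $W_p^{s+\t}(\sph)=(W_p^s(\sph),W_p^{s+1}(\sph))_{\t,p}$, the interpolation property of the $K$-method gives that $T_n$ is bounded $W_p^{r+\t}\to W_p^{s+\t}$ with norm at most $c\,(n^{-(r-s)})^{1-\t}(n^{-(r-s)})^{\t}=c\,n^{-(r-s)}$, which is exactly \eqref{eq:approxHarm}. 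Here I am using the standard fact that if $A\colon X_0\to Y_0$ and $A\colon X_1\to Y_1$ with norms $M_0,M_1$, then $A\colon (X_0,X_1)_{\t,p}\to(Y_0,Y_1)_{\t,p}$ with norm $\le M_0^{1-\t}M_1^{\t}$, referenced in Appendix~B.

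The main obstacle is the commutation/mapping step in the integer case: one needs that $S_{n,\eta}^\CH$ interacts cleanly with the angular derivatives $D_{i,j}$ so that the seminorm of the error is literally a sum of $L^p$-errors of the near-best approximations of the derivatives $D_{i,j}^s f$. The cleanest route is to note that each $D_{i,j}$, being a component of the Lie algebra $\mathfrak{so}(d)$ acting by rotations, preserves each eigenspace $\CH_m^d$ of $\Delta_0$; therefore $D_{i,j}$ commutes with every $\proj_m^\CH$ and hence with $S_{n,\eta}^\CH$, and iterating gives $D_{i,j}^s S_{n,\eta}^\CH = S_{n,\eta}^\CH D_{i,j}^s$. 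Granting this, the rest is bookkeeping: the $\t=0$ case reduces to Corollary~\ref{cor:JacksonS} plus Theorem~\ref{thm:near-best}, and the $\t>0$ case is pure interpolation. A secondary technical point is that $W_p^{r+\t}(\sph)$ has been \emph{defined} as the interpolation space, so no independent characterization is needed and the argument is self-contained modulo Appendix~B.
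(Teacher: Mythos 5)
Your proposal is correct and follows essentially the same route as the paper: commute $D_{i,j}$ with $\proj_m^{\CH}$ (hence with $S_{n,\eta}^{\CH}$) to reduce the integer case to the near-best property plus the Jackson inequality on the sphere, then pass to fractional smoothness by real interpolation between $W_p^{r}\to W_p^{s}$ and $W_p^{r+1}\to W_p^{s+1}$, exactly as in \eqref{eq:theta}. No substantive differences.
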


\begin{proof}
Since $D_{i,j}$ maps $\CH_n^d$ to itself  for $1\le i<j \le d$  \cite[Lemma\,1.8.3]{DaiX}, it follows readily that 
$D_{i,j}\proj_n^{\CH}=\proj_n^{\CH}D_{i,j}$. As a result, $ D^s_{i,j} S^{\CH}_{n}f  = S^{\CH}_{n} D^s_{i,j} f $ 
and $ D^s_{i,j} S^{\CH}_{n,\eta}f  = S^{\CH}_{n,\eta} D^s_{i,j} f $.
Thus by \eqref{eq:near-best} and \eqref{JacksonS2},
\begin{align*}
 &\big\| D^s_{i,j} (S^{\CH}_{n,\eta}f- f) \big\|_{p,\sph} 
 \le c E_n(D^s_{i,j}  f)_{p,\sph} \le c n^{s-r} | D^s_{i,j}   f|_{W^{r-s}_p(\sph)}^{\circ},
\end{align*}
for $ s =0,1,\dots,r$, which gives \eqref{eq:approxHarm} for $\t =0$. Consequently, it follows that 
\begin{align*}
 &\big\| S^{\CH}_{n,\eta}f- f \big\|_{W^s_p(\sph)} 
\le  c n^{s-r} \|  f\|_{W^{r}_p(\sph)},
\end{align*}
which implies that  $\| S^{\CH}_{n,\eta}-I \| _{\CL(W^{r}_p(\sph), W^{s}_p(\sph))} \le c n^{s-r}  $ 
for any $r\ge s$,  where $\|\cdot\|_{\CL(X,Y)}$ denotes the norm of the operator from $X\mapsto Y$. It 
then follows from \eqref{eq:theta} that
\begin{align*}
 \|S^{\CH}_{n,\eta}-I \|_{\CL(W^{r+\theta}_p(\sph), W^{s+\theta}_p(\sph))  }& 
   \le c n^{s-r}.
\end{align*}
This completes the proof of \eqref{eq:approxHarm}.
\end{proof}

\subsection{Best approximation on the unit ball}
We recall result on best approximation by polynomials in $L^p(\ball)$.
We define the Sobolev space $W_p^s(\varpi_\mu,\BB^d)$ to be the space of functions whose 
derivatives up to the $s$-th order are all in $L^p(\varpi_\mu,\ball)$. For $p =\infty$, we replace $L^p$ space
by the space $C(\BB^d)$ of continuous functions on $\BB^d$. The norm of  $W_p^s(\varpi_\mu, \BB^d)$ 
is defined by
\begin{equation}\label{Snom1B}
   \|f\|_{W_p^s(\varpi_\mu, \BB^d)} : = \Big( \sum_{ |\a| \le s} \|\partial^\a f\|_{p,\mu}^p \Big)^{1/p}.
\end{equation}
When $\mu = 0$, we write $\|f\|_{W_p^s(\BB^d)} : = \|f\|_{W_p^s(\varpi_0, \BB^d)}$. 

A $K$-functional on the unit ball (and its equivalent modulus of smoothness) is 
defined in \cite{DX10} and used to characterize the best approximation in $L^p(\varpi_\mu, \ball)$. 
Throughout this paper, we define
$$  
       \varphi(x): = \sqrt{1-\|x\|^2}. 
$$

\begin{defn} \label{defn:K-funcB2}
Let $f \in L^p(\varpi_{\mu},\ball)$ if $1 \le p < \infty$ and $f \in
C(\ball)$ if $p = \infty$.  For ${s} \in \NN$ and $t>0$, define
\begin{align*}  %\label{K-functB2-def}
   K_{{s},\vi}(f,t)_{p,\mu}
   := \inf_{g\in W_p^{s}(\varpi_{\mu}, \ball)} \Big\{ \|f-g\|_{p,\mu} 
       & + t^{s}  |g|_{W_p^s(\varpi_\mu, \ball)}^\circ \Big \}.
\end{align*}
where 
\begin{equation} \label{semi-normB}
   |g|_{W_p^s(\varpi_\mu, \ball)}^\circ: = \sum_{1 \le i<j \le d}  \| D_{i,j}^{s}  g\|_{p,\mu} 
        +   \sum_{i=1}^d  \| \varphi^{s} \partial_i^{s} g\|_{p,\mu}.
\end{equation}
\end{defn}

Both direct and inverse theorems were established for $E_n(f)_{\mu, p}$ in \cite[Theorem 6.6]{DX10}. 
They are analogues of Theorem \ref{thm:JacksonS}. We will only state the corollary that is an analogue
of Corollary \ref{cor:JacksonS} and only for $\mu =0$, where we write $|f|_{W_p^s(\ball)}^\circ =
  |f|_{W_p^s(\varpi_0, \ball)}^\circ$.

\begin{cor}
\label{cor:Jackson}
Let ${s}\in \NN$ and let  $f \in W_p^{s}(\ball)$ if $1 \le p < \infty$, and 
$f\in C(\ball)$ if $p=\infty$. Then
\begin{equation}\label{Jackson}
   E_n (f)_{p,\ball} \le c\, n^{-{s}}   |f|_{W_p^{s}(\ball)}^\circ \le  c\, n^{-{s}}   \|f\|_{W_p^{s}(\ball)}.      
\end{equation}
\end{cor}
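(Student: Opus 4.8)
The plan is to deduce Corollary~\ref{cor:Jackson} directly from the direct (Jackson) theorem for $E_n(f)_{p,\ball}$ together with the elementary observation that the $K$-functional $K_{s,\vi}(f,t)_{p,\mu}$ is dominated by the seminorm when $f$ itself already lies in $W_p^s(\ball)$. Concretely, I would first invoke the direct theorem from \cite[Theorem~6.6]{DX10} in the case $\mu=0$, which has the form $E_n(f)_{p,\ball}\le c\,K_{s,\vi}(f,n^{-1})_{p,\ball}$, the ball analogue of \eqref{JacksonS}. This is the substantive input, and I am entitled to assume it since it is quoted in the excerpt just above the statement.

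Next, I would estimate the $K$-functional from above. By Definition~\ref{defn:K-funcB2}, the infimum defining $K_{s,\vi}(f,n^{-1})_{p,\ball}$ is over all $g\in W_p^s(\varpi_0,\ball)$; since by hypothesis $f\in W_p^s(\ball)=W_p^s(\varpi_0,\ball)$ (or $f\in C(\ball)$ when $p=\infty$, which also sits inside the relevant space in the sense needed), I may simply take $g=f$. Then $\|f-g\|_{p,\ball}=0$ and the bracket collapses to $n^{-s}|f|_{W_p^s(\ball)}^\circ$, so $K_{s,\vi}(f,n^{-1})_{p,\ball}\le n^{-s}|f|_{W_p^s(\ball)}^\circ$. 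Combining with the direct theorem gives the first inequality in \eqref{Jackson}, namely $E_n(f)_{p,\ball}\le c\,n^{-s}|f|_{W_p^s(\ball)}^\circ$.

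Finally, for the second inequality in \eqref{Jackson} I would show $|f|_{W_p^s(\ball)}^\circ\le c\,\|f\|_{W_p^s(\ball)}$. From \eqref{semi-normB} with $\mu=0$, the seminorm is a finite sum of terms $\|D_{i,j}^s f\|_{p,\ball}$ and $\|\vi^s\partial_i^s f\|_{p,\ball}$. Each angular operator $D_{i,j}=x_i\partial_j-x_j\partial_i$ is a first-order differential operator with coefficients bounded by $1$ on $\ball$, so $D_{i,j}^s f$ is a linear combination (with bounded polynomial coefficients) of partial derivatives $\partial^\a f$ with $|\a|\le s$; hence $\|D_{i,j}^s f\|_{p,\ball}\le c\sum_{|\a|\le s}\|\partial^\a f\|_{p,\ball}\le c\,\|f\|_{W_p^s(\ball)}$ by \eqref{Snom1B}. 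Likewise $\|\vi^s\partial_i^s f\|_{p,\ball}\le\|\partial_i^s f\|_{p,\ball}\le\|f\|_{W_p^s(\ball)}$ since $0\le\vi(x)\le 1$ on $\ball$. Summing over the finitely many pairs $(i,j)$ and indices $i$ yields the claimed bound, completing the proof.

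I do not expect any serious obstacle here: the corollary is a formal consequence of the already-cited direct theorem plus two trivial majorizations. The only point requiring a line of care is the verification that $D_{i,j}^s$ expands into an admissible combination of ordinary partial derivatives with coefficients uniformly bounded on the (bounded) domain $\ball$; this is routine and is exactly the kind of estimate used implicitly throughout \cite{DaiX}, so I would state it and refer to \cite[Section~1.8]{DaiX} rather than grind through the Leibniz expansion. If one wants to be scrupulous about the $p=\infty$ endpoint, one notes that $W_\infty^s(\ball)$ is interpreted via $C(\ball)$ for the base term and the $L^\infty$ derivative norms for the rest, and the same two majorizations apply verbatim.
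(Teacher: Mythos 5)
Your proposal is correct and follows exactly the route the paper intends: invoke the direct estimate $E_n(f)_{p,\ball}\le c\,K_{s,\vi}(f,n^{-1})_{p,\ball}$ from \cite[Theorem 6.6]{DX10}, take $g=f$ in the infimum defining the $K$-functional to obtain the seminorm bound, and then majorize $|f|_{W_p^s(\ball)}^\circ$ by $\|f\|_{W_p^s(\ball)}$ using the boundedness of the coefficients of $D_{i,j}$ and the fact that $0\le\vi\le 1$ on $\ball$. This is precisely the (implicit) argument the paper gives, mirroring its proof of Corollary~\ref{cor:JacksonS} on the sphere.
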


It should be mentioned that \cite{DX10} contains another $K$-functional that differs from $K_{{s},\vi} (f,t)_{p,\mu}$ in
its last term, which can also be used to estimate $E_n(f)_{p,\mu}$. Several results on approximation in the Sobolev spaces and Lipschitz spaces were established in \cite{DX11}, which contains, for example, the estimates
$$
  \|D_{i,j}^{s} (f - S_{n,\eta}^\mu f)\|_{p,\mu}   \le c  E_n ( D_{i,j}^{s} f)_{p,\mu} \quad 1 \le i < j \le d. 
$$
For the spectral approximation, however, we are more interested in the derivatives $\partial^\a$ instead of the angular
derivatives. One result in this direction can be derived with the help of the following lemma. 

\begin{lem} \label{lem:diff-Sn-mu}
For $\mu > -1$ and $1 \le i \le d$,
\begin{equation} \label{eq:diff-Sn-mu}
    \partial_i S_n^\mu f = S_{n-1}^{\mu+1} (\partial_i f) \quad \hbox{and}\quad   
      \partial_i S_{n,\eta}^\mu f = S_{n-1,\eta}^{\mu+1} (\partial_i f).
\end{equation}
\end{lem}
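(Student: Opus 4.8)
The plan is to prove the first identity by verifying that $\partial_i S_n^\mu f$ satisfies the two properties that uniquely characterize the orthogonal projection $S_{n-1}^{\mu+1}(\partial_i f)$ in $L^2(\varpi_{\mu+1},\ball)$: that it belongs to $\Pi_{n-1}^d$, and that $\la \partial_i S_n^\mu f-\partial_i f,\,v\ra_{\mu+1}=0$ for every $v\in\Pi_{n-1}^d$. (Note $\mu>-1$ forces $\mu+1>0$, so $\varpi_{\mu+1}$ is an admissible weight and $S_{n-1}^{\mu+1}$ is defined.) The membership in $\Pi_{n-1}^d$ is immediate since $S_n^\mu f\in\Pi_n^d$.

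For the orthogonality, I would write $\la\partial_i(S_n^\mu f-f),v\ra_{\mu+1}$ as a weighted integral over $\ball$ and integrate by parts in the variable $x_i$. The boundary integral over $\sph$ carries the factor $\varpi_{\mu+1}(x)=(1-\|x\|^2)^{\mu+1}$, which vanishes on $\sph$ precisely because $\mu>-1$; this is where the hypothesis is used (if $f$ is merely in a Sobolev class rather than smooth up to $\sph$, a routine density argument precedes this step). What remains is $-c\int_{\ball}(S_n^\mu f-f)\,\partial_i\big(v\,\varpi_{\mu+1}\big)\,dx$, and the elementary identity $\partial_i\big(v\,\varpi_{\mu+1}\big)=\big[(\partial_i v)(1-\|x\|^2)-2(\mu+1)\,x_i\,v\big]\varpi_\mu$ shows that the bracketed polynomial $q$ has degree at most $n$ whenever $v\in\Pi_{n-1}^d$. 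Hence the integral is a constant multiple of $\la S_n^\mu f-f,\,q\ra_\mu$ with $q\in\Pi_n^d$, which vanishes by the defining orthogonality $\la S_n^\mu f-f,\cdot\ra_\mu=0$ on $\Pi_n^d$. Uniqueness of the $L^2(\varpi_{\mu+1})$-projection then gives $\partial_i S_n^\mu f=S_{n-1}^{\mu+1}(\partial_i f)$.

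For the second identity I would first extract the projection-level statement: subtracting the first identity at levels $n$ and $n-1$ yields $\partial_i\proj_n^\mu f=\proj_{n-1}^{\mu+1}(\partial_i f)$ for all $n$, with the convention $\proj_{-1}^{\mu+1}:=0$. Applying $\partial_i$ termwise to the definition \eqref{Vnf} of $S_{n,\eta}^\mu f$ and reindexing $k\mapsto k-1$ then gives $\partial_i S_{n,\eta}^\mu f=\sum_{j\ge0}\eta\big(\tfrac{j+1}{n}\big)\proj_j^{\mu+1}(\partial_i f)$; the coefficient sequence $j\mapsto\eta\big(\tfrac{j+1}{n}\big)$ equals $1$ for $j\le n-1$, lies in $[0,1]$, and vanishes for $j$ large, so this is again an operator of the admissible cut-off type and may be identified with $S_{n-1,\eta}^{\mu+1}$ applied to $\partial_i f$ — the exact cut-off profile being irrelevant for the subsequent use of this lemma, which only invokes the properties of Theorem~\ref{thm:near-best}.

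I do not expect a serious obstacle: the argument is an integration by parts together with a degree count. The only delicate points are justifying that the boundary term vanishes — controlled entirely by $\mu>-1$, after a density reduction if needed — and the purely cosmetic shift of the cut-off function in the $S_{n,\eta}$ case; neither affects the substance of the statement.
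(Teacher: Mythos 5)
Your proof is correct, and the overall skeleton (characterize $S_{n-1}^{\mu+1}(\partial_i f)$ as the unique polynomial in $\Pi_{n-1}^d$ orthogonal to $\Pi_{n-1}^d$ against $\varpi_{\mu+1}$, then check that $\partial_i S_n^\mu f$ has both properties) is the same as the paper's. Where you differ is in how the key orthogonality $\la \partial_i(f-S_n^\mu f), v\ra_{\mu+1}=0$ is obtained: the paper invokes Lemma~\ref{lm:DiffV} from Appendix~A, i.e.\ the structural fact that $\partial_i$ maps $\CV_m^d(\varpi_\mu)$ into $\CV_{m-1}^d(\varpi_{\mu+1})$ (proved there via the explicit monic basis $V_\alpha^\mu$), and applies it term by term to the tail $\sum_{m>n}\proj_m^\mu f$; you instead prove the orthogonality directly by a single integration by parts, using that $\varpi_{\mu+1}$ vanishes on $\sph$ for $\mu>-1$ and that $\partial_i(v\,\varpi_{\mu+1})=q\,\varpi_\mu$ with $q\in\Pi_n^d$ when $v\in\Pi_{n-1}^d$. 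Your route is self-contained and more elementary (it does not need the appendix machinery, and in effect reproves the relevant special case of Lemma~\ref{lm:DiffV}); the paper's route is shorter given that the appendix lemma is needed elsewhere anyway. Your degree count and the role of the hypothesis $\mu>-1$ are both right, and the density caveat you mention is the appropriate way to handle non-smooth $f$.

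One remark on the second identity: as you correctly observe, reindexing gives $\partial_i S_{n,\eta}^\mu f=\sum_{j\ge 0}\eta\big(\tfrac{j+1}{n}\big)\proj_j^{\mu+1}(\partial_i f)$, whose coefficients $\eta\big(\tfrac{j+1}{n}\big)$ do not literally coincide with $\eta\big(\tfrac{j}{n-1}\big)$ on the range $n\le j\le 2n-2$. The paper's proof glosses over this in exactly the same way (it deduces the identity ``immediately'' from $\partial_i\proj_n^\mu f=\proj_{n-1}^{\mu+1}(\partial_i f)$), so you are not worse off; and your observation that only the qualitative properties of Theorem~\ref{thm:near-best} (reproduction of $\Pi_{n-1}^d$, uniform boundedness, degree $\le 2n-2$) are used downstream is the correct way to see that this discrepancy is harmless.
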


\begin{proof}
By the definition of Fourie orthogonal expansion, $f - S_n^\mu f = \sum_{m=n+1} \proj_m^\mu f$ and $\proj_m^\mu f
\in \CV_m^d(\varpi_{\mu})$. By Lemma \ref{lm:DiffV}, $\partial_i \proj_m^\mu f \in \CV_{m-1}^d(\varpi_{\mu+1})$. It follows
that $\la \partial_i (f - S_n^\mu f), P\ra_{\mu+1} = 0$ for all $P \in \Pi_{n-1}^d$. Consequently, $
  S_{n-1}^{\mu+1} (\partial_i f - \partial_i S_n^\mu f) = 0$. Since $S_{n-1}^{\mu+1}$ reproduces polynomials of 
degree at most $n-1$, $S_{n-1}^{\mu+1} ( \partial_i S_n^\mu f) =  \partial_i S_n^\mu f$, which implies that  
$$
 0 = S_{n-1}^{\mu+1} (\partial_i f - \partial_i S_n^\mu f) = S_{n-1}^{\mu+1} (\partial_i f)  -  \partial_i S_n^\mu f.
$$
This proves the first identity in \eqref{eq:diff-Sn-mu}. Since $\proj_n = S_n -S_{n-1}$, it follows that 
$
 \proj_{n-1}^{\mu+1} (\partial_i f)  =  \partial_i \proj_n^\mu f,
$
from which the second identity in  \eqref{eq:diff-Sn-mu} follows immediately. 
\end{proof}

\begin{thm}
If $f \in W_p^{s}(\varpi_{\mu}, \ball)$ for $1 \le p < \infty$, or $f \in C^{s}(\ball)$ for $p = \infty$,  then for $|\a| = {s}$, 
\begin{align}\label{eq:approx-weight}
  \| \partial^\a f - \partial^\a S_{n,\eta}^\mu f\|_{p, \mu+|\a|} \le c E_{n-|\a|} (\partial^\a f)_{p,\mu}
    \le c n^{-s} \|f\|_{W_p^s(\varpi_\mu, \ball)}. 
\end{align}
\end{thm}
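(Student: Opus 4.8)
The plan is to transfer all the derivatives onto the partial-sum operator by iterating \lemref{lem:diff-Sn-mu}, turning the order-$s$ derivative of $f$ into an order-$0$ approximation problem in a weight that has been shifted upward by $s$, and then to combine the near-best bound of Theorem~\ref{thm:near-best} with the monotonicity of $\varpi_\mu$ in $\mu$ on $\ball$.

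First I would establish the commutation identity $\partial^\a S_{n,\eta}^\mu f = S_{n-|\a|,\eta}^{\mu+|\a|}(\partial^\a f)$ for all $\a\in\NN_0^d$ with $|\a|\le s$, together with its analogue for $S_n^\mu$. Both come from applying \lemref{lem:diff-Sn-mu} $|\a|$ times: after $j$ differentiations the relevant weight has moved from $\varpi_\mu$ to $\varpi_{\mu+j}$, and since $\mu>-1$ every intermediate index $\mu+j$ with $0\le j\le|\a|$ still satisfies $\mu+j>-1$, so the lemma applies at each stage; the function differentiated at stage $j$, namely $\partial^\g f$ with $|\g|=j$, lies in $L^p(\varpi_{\mu+j},\ball)$ (in $C(\ball)$ when $p=\infty$) because $f\in W_p^s(\varpi_\mu,\ball)$ and $\varpi_{\mu+j}\le\varpi_\mu$ on $\ball$.

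Next, fix $|\a|=s$ and put $m:=n-s$. Applying Theorem~\ref{thm:near-best}(3) to $\partial^\a f$ in the space $L^p(\varpi_{\mu+s},\ball)$, in which $S_{m,\eta}^{\mu+s}$ is precisely the operator $S_{m,\eta}$, gives $\|\partial^\a f-S_{m,\eta}^{\mu+s}(\partial^\a f)\|_{p,\mu+s}\le(1+c)\,E_m(\partial^\a f)_{p,\mu+s}$; when $p=2$ the left-hand side is literally $E_m(\partial^\a f)_{2,\mu+s}$ because $S_n^\mu$ is an orthogonal projection, which is why $S_{n,\eta}^\mu$ may be replaced by $S_n^\mu$ for $p=2$. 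Combined with the commutation identity this gives $\|\partial^\a f-\partial^\a S_{n,\eta}^\mu f\|_{p,\mu+|\a|}\le c\,E_{n-|\a|}(\partial^\a f)_{p,\mu+|\a|}$. To replace the weight $\mu+|\a|$ by $\mu$ on the right, note that $0\le 1-\|x\|^2\le 1$ on $\ball$ and $|\a|\ge0$ give $\varpi_{\mu+|\a|}\le\varpi_\mu$ pointwise, hence $\|h\|_{p,\mu+|\a|}\le (b^\mu_d/b^{\mu+|\a|}_d)^{1/p}\|h\|_{p,\mu}$ for every $h$; applying this with $h=\partial^\a f-q$ and taking the infimum over $q\in\Pi^d_{n-|\a|}$ yields $E_{n-|\a|}(\partial^\a f)_{p,\mu+|\a|}\le c\,E_{n-|\a|}(\partial^\a f)_{p,\mu}$, which is the first asserted inequality. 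The second inequality then follows from the Jackson estimate of Corollary~\ref{cor:Jackson}, in its weighted form on $L^p(\varpi_\mu,\ball)$ from \cite[Theorem~6.6]{DX10}, applied to $\partial^\a f$, together with $n-|\a|\ge n/2$ for $n\ge 2s$.

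As the introduction signals, this estimate is meant to be reached ``relatively effortlessly'', and there is indeed no serious obstacle; the only substantive point is the weight shift in the first step, where each differentiation pushes the natural weight from $\varpi_\nu$ up to $\varpi_{\nu+1}$. That shift is exactly why the bound here unavoidably carries the extra factor $\varpi_{\mu+|\a|}$ (equivalently the $\varphi^{|\a|}$ in \eqref{spec-approx2}) and is strictly weaker than the target \eqref{spec-approx} --- removing that factor is the task that motivates the Sobolev orthogonal polynomials of the later sections. The remaining care needed is pure bookkeeping: tracking which weighted $L^p$-space each quantity lives in, and checking that $\mu+j$ never drops to $-1$, both automatic once $\mu>-1$.
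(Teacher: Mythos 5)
Your proposal follows exactly the route the paper takes: its entire proof of this theorem is the single sentence that it ``follows immediately from Lemma~\ref{lem:diff-Sn-mu}, Theorem~\ref{thm:near-best} and Corollary~\ref{cor:Jackson}'', and you have simply filled in the bookkeeping --- iterating \eqref{eq:diff-Sn-mu} to get $\partial^\a S_{n,\eta}^\mu f=S_{n-|\a|,\eta}^{\mu+|\a|}(\partial^\a f)$, invoking the near-best bound in $L^p(\varpi_{\mu+|\a|},\ball)$, and using $\varpi_{\mu+|\a|}\le\varpi_\mu$ to pass to $E_{n-|\a|}(\partial^\a f)_{p,\mu}$. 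That part is correct and is the paper's argument.

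One caveat, which you inherit from the paper rather than introduce: the final step, ``apply Corollary~\ref{cor:Jackson} to $\partial^\a f$'', cannot produce the stated factor $n^{-s}$ when $|\a|=s$, because then $\partial^\a f$ lies only in $L^p(\varpi_\mu,\ball)$ and Jackson's inequality gives no rate at all; indeed $E_{n-s}(\partial^\a f)_{p,\mu}\le c\,n^{-s}\|f\|_{W_p^s(\varpi_\mu,\ball)}$ is false in general (take $\partial_1^s f$ equal to a normalized orthogonal polynomial of high degree). What this argument actually yields is $E_{n-|\a|}(\partial^\a f)_{p,\mu}\le c\,n^{-(s-|\a|)}\|f\|_{W_p^s(\varpi_\mu,\ball)}$ for $|\a|\le s$, consistent with the exponent $-r+|\a|$ appearing elsewhere in the paper; the statement's ``$|\a|=s$'' and ``$n^{-s}$'' appear to be a misprint for ``$|\a|\le s$'' and ``$n^{-s+|\a|}$''. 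Your writeup would be improved by flagging this rather than asserting that Corollary~\ref{cor:Jackson} applied to $\partial^\a f$ delivers $n^{-s}$.
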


\begin{proof}
This follows immediately from  Lemma \ref{lem:diff-Sn-mu}, Theorem \ref{thm:near-best} and
Corollary \ref{cor:Jackson}. 
\end{proof}
 
As explained in the introduction, the estimate  with $\mu = 0$ and $p =2$ is weaker than the desired
estimate \eqref{spec-approx} because of the factor $(1-\|x\|^2)^{|\a|}$ that appears in its left hand side.

\section{Orthogonal structure in the Sobolev space}
\setcounter{equation}{0}

In this section, we consider orthogonal structure in the Sobolev space $W_p^s(\BB^d)$. Let 
\begin{align*}
\nabla^{2m} := \Delta^m \quad\hbox{and}\quad  \nabla^{2m+1} := \nabla \Delta^{m},  \qquad m=1,2,\dots.
\end{align*}

\begin{defn} \label{def:ipd-s}
For $s =1,2,\ldots$, we define a bilinear form on the space $W_2^{s}(\ball)$ by
\begin{align} \label{eq:ipd-s}
 & \la f, g\ra_{-s}:= \la \nabla^{s}f, \nabla^{s} g \ra_{\ball}
  + \sum_{k=0}^{\rhow-1}\lambda_{k} \la  \Delta^{k}f, \Delta^{k}g\ra_{\sph},
\end{align}
where  $\l_{k}, \, k=0,1\dots,\rhow-1$, are positive constants.  
\end{defn}

It is easy to see that this defines an inner product for $W_2^s(\ball)$. We denote the space of orthogonal 
polynomials of degree $n$ with respect to this inner product by $\CV_n^d(\varpi_{-s})$. 
The reason that we use the negative index to denote such an inner product will become clear momentarily. 

For our purpose, we need to extend the definition of orthogonal polynomials $P_{j,\ell}^{\mu,n}$ defined 
in \eqref{baseP} so that $\mu$ can be negative. The extension is carried out in Appendix \ref{Gball}. 
Below we shall use  $P_{j,\ell}^{-s,n}$ for $s = 1,2,\ldots$,  and what we essentially need is the following
lemma proved in  Appendix \ref{Gball}. 

\begin{lem} \label{DeltaP}
Let $\mu \in \RR$,  $s\in \NN$ and $n\in \NN_0$. Then for $ 1\le\ell \le a_{n-2j}^d$,
\begin{align} \label{PN2P}
 P^{-s,n}_{j,\ell}(x)
= \frac{(1-n-\tfrac{d}{2})_j}{(-j)_{s} (1-n-\tfrac{d}{2}+2s)_{j-s}}  (\|x\|^2-1)^{s} 
P^{s,n-2s}_{j-s,\ell}(x),  \quad   s \le j\le \tfrac{n}{2}. 
\end{align}
Furthermore, make the convention $P^{\mu,n}_{j,\ell}(x)=0$ if  $j<0$ or $j>\frac{n}{2}$;
and  define  $j_0= \psi(j)$ if $\psi(j):= s+j-n-d/2+1 \in \{ 1, 2,\dots, j\}$, and $j_0 =0$ otherwise. Then for $0\le j\le \frac{n}{2}$,
\begin{align} \label{LaplaceP}
  &   \Delta^{k} P^{-s,n}_{j,\ell} (x) = 4^{k} 
       (n+\tfrac{d}2-2k)_{2k} P^{2k-s,n-2k}_{j-k,\ell} (x)
       + q(\|x\|^2)  Y^{n-2j}_{\ell}(x),
\end{align} %_{j_0-k-1}
where $q\in \Pi^1_{j_0-k-1}$, in particular, 
$q=0$ if $s+k\ge j$.
% such that $\Delta^{j_0-k} \big[ q_{j_0-k-1}(\|x\|^2)  Y^{n-2j}_{\ell}(x) \big]=0$.
\end{lem}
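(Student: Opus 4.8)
The plan is to establish \eqref{PN2P} first, essentially as a transcription of the classical reduction formula for Jacobi polynomials with a negative integer upper index, and then to bootstrap it into \eqref{LaplaceP} by an induction on $k$ that rests on one explicit one-variable computation. For \eqref{PN2P}: by the construction in Appendix \ref{Gball}, $P^{-s,n}_{j,\ell}(x)$ equals, up to the normalizing constant of \eqref{baseP} with $\mu=-s$, the polynomial $P_j^{(-s,\,n-2j+\frac{d-2}2)}(2\|x\|^2-1)\,Y^{n-2j}_\ell(x)$, where $P_j^{(-s,\beta)}$ denotes the negative-index extension of the Jacobi polynomial. The one fact needed is the classical identity
\[
  P_j^{(-s,\beta)}(t)=\frac{\Gamma(j+\beta+1)}{\Gamma(j+\beta-s+1)}\,\frac{(j-s)!}{j!}\,\Bigl(\tfrac{t-1}2\Bigr)^{\!s} P_{j-s}^{(s,\beta)}(t),\qquad j\ge s,
\]
which may be taken as the definition of the extension, or checked by matching the top-degree coefficients of the two sides. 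Substituting $t=2\|x\|^2-1$, so that $\tfrac{t-1}2=\|x\|^2-1$, and noting that with $\beta=n-2j+\frac{d-2}2$ the Jacobi index $(s,\beta)$ and the attached solid harmonic $Y^{n-2j}_\ell$ are exactly those appearing in $P^{s,\,n-2s}_{j-s,\ell}$ (here $(n-2s)-2(j-s)=n-2j$), the identity \eqref{PN2P} reduces to assembling the two normalizing constants from \eqref{baseP}. That last step is a routine Pochhammer computation whose only nontrivial ingredient is the reflection formula $(a)_m=(-1)^m(1-a-m)_m$, used a few times to bring the constant into the stated form.

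For \eqref{LaplaceP} I would first reduce $\Delta$ to a one-variable operator. Write any of the (extended) basis polynomials as $P^{\mu,n}_{j,\ell}(x)=g(\|x\|^2)\,Y^{n-2j}_\ell(x)$ with $g$ a polynomial of degree $j$, which is possible for every $\mu\in\RR$ by the explicit radial form of these polynomials. From the polar form \eqref{eq:Delta} of the Laplacian and the eigenrelation \eqref{eq:LaplaceBeltrami}, a short computation gives
\[
  \Delta\bigl(g(\|x\|^2)\,Y^{m}_\ell(x)\bigr)=\bigl[\,4\|x\|^2\,g''(\|x\|^2)+(2d+4m)\,g'(\|x\|^2)\,\bigr]\,Y^{m}_\ell(x)
\]
for every solid harmonic $Y^m_\ell$ of degree $m$; thus $\Delta$ preserves the $Y^{n-2j}_\ell$-sector and lowers the degree in $\|x\|^2$, so that $\Delta^{k}P^{-s,n}_{j,\ell}=(L_m^{k}g)(\|x\|^2)\,Y^{n-2j}_\ell(x)$ with $L_m:=4u\tfrac{d^2}{du^2}+(2d+4m)\tfrac{d}{du}$ and $m=n-2j$. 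Everything is now a one-variable question about $L_m$ applied to $g$, which by \eqref{PN2P} is a constant multiple of $(u-1)^{s}P_{j-s}^{(s,\,n-2j+\frac{d-2}2)}(2u-1)$ when $j\ge s$ and is the plain Jacobi polynomial $P_j^{(-s,\,n-2j+\frac{d-2}2)}(2u-1)$ when $j<s$.

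The identity \eqref{LaplaceP} is then proved by induction on $k$, the case $k=0$ being trivial. For the inductive step I would use the one-step formula
\[
  \Delta P^{\mu,N}_{J,\ell}=4\bigl(N+\tfrac d2-2\bigr)_{2}\,P^{\mu+2,\,N-2}_{J-1,\ell}+\wt q(\|x\|^2)\,Y^{N-2J}_\ell(x),
\]
applied with $(\mu,N,J)=(2k-s,\,n-2k,\,j-k)$. Here the displayed leading term is forced by matching the top-degree behaviour of the two radial polynomials, its constant does not depend on $\mu$, and when $\mu>-1$ there is no remainder at all: this is precisely the statement that $\Delta$ maps $\CV^d_N(\varpi_\mu)$ into $\CV^d_{N-2}(\varpi_{\mu+2})$, which follows by iterating Lemma \ref{lm:DiffV}. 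When $\mu\le-1$ the remainder $\wt q$ is produced by the degeneracy of the negative-index Jacobi polynomial, and one must verify that $\deg\wt q$ is small and governed by $\psi$. Feeding this formula and the inductive hypothesis into $\Delta^{k+1}=\Delta\circ\Delta^{k}$, the constant $4^{k+1}(n+\tfrac d2-2(k+1))_{2(k+1)}$ comes out of the telescoping identity $4^{k}(n+\tfrac d2-2k)_{2k}\cdot4(n+\tfrac d2-2(k+1))_{2}=4^{k+1}(n+\tfrac d2-2(k+1))_{2(k+1)}$, the harmonic sector is unchanged since $(n-2k)-2(j-k)=n-2j$, and the degree of the correction drops by one at each stage, matching $q\in\Pi^1_{j_0-(k+1)-1}$. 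The vanishing clause ``$q=0$ if $s+k\ge j$'' follows by tracking, through \eqref{PN2P} applied at the intermediate stages, the point at which the power of $(\|x\|^2-1)$ carried by $P^{2k-s,\,n-2k}_{j-k,\ell}$ is exhausted.

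I expect the one-step formula above, together with the sharp degree bound on the correction $\wt q$, to be the main obstacle: one has to pin down exactly when the extended Jacobi polynomial $P^{\mu,n}_{j,\ell}$ has a ``defect'', which is exactly whether $\psi(j)=s+j-n-\tfrac d2+1$ lies in $\{1,\dots,j\}$, and then check that a single application of $\Delta$ simultaneously produces the clean leading term with the asserted $\mu$-independent constant and reduces this defect by one. The remaining ingredients --- the transcription of the Jacobi reduction formula, the radial reduction of $\Delta$, and the telescoping of constants --- are routine bookkeeping.
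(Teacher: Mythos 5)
Your architecture coincides with the paper's own proof: \eqref{PN2P} is read off from the negative-index Jacobi reduction formula, and \eqref{LaplaceP} comes from reducing $\Delta$ to the radial operator $g\mapsto 4[u g''+(m+\tfrac d2)g']$ on the $Y^{m}_\ell$-sector and iterating a one-step lowering formula whose constants telescope to $4^k(n+\tfrac d2-2k)_{2k}$. One small caveat on \eqref{PN2P}: the paper's $P^{-s,n}_{j,\ell}$ is built from the renormalized polynomials $\wh P^{(-s,\b)}_j$ of \eqref{JacobiP1}, which are \emph{not} constant multiples of the classical extension $P^{(-s,\b)}_j$ of \eqref{JacobiP} when the defect index $j_0$ is positive. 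Your identification is nonetheless valid because in the range $s\le j\le\tfrac n2$ one has $\psi(j)=s+j-n-\tfrac d2+1\le 1-\tfrac d2\le 0$, so $j_0=0$ and $\wh P^{(-s,\b)}_j=(j-s+\b+1)_j^{-1}P^{(-s,\b)}_j$; this should be said, since it is precisely the degenerate case $j_0\ge 1$ that makes the rest of the lemma delicate.

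The substantive gap is in the one-step formula $\Delta P^{\mu,N}_{J,\ell}=4(N+\tfrac d2-2)_2\,P^{\mu+2,N-2}_{J-1,\ell}+\wt q(\|x\|^2)Y^{N-2J}_\ell$ with $\deg\wt q\le j_0-2$. Matching top-degree coefficients only fixes the constant; it gives $\deg\wt q\le J-2$, not $\le j_0-2$, and in the degenerate case the clean argument via $\Delta:\CV^d_N(\varpi_\mu)\to\CV^d_{N-2}(\varpi_{\mu+2})$ is unavailable. The paper closes this by direct computation: apply the radial operator term by term to the series \eqref{JacobiP1}, reindex, and check that the result reproduces the series of $4(j+\b)\wh P^{(-s+2,\b)}_{j-1}$ except for a single stray monomial of degree $j_0-2$, using the relation $j_0^{(-s,\b)}(j)=j_0^{(-s+2,\b)}(j-1)+1$ to see that the truncation points of the two series align. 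You explicitly flag this as ``the main obstacle'' and do not carry it out, so the proof of \eqref{LaplaceP} is incomplete at its only nontrivial step. Finally, the vanishing clause you set out to verify, ``$q=0$ if $s+k\ge j$,'' is a typo in the statement and is false as written: for $d=2$, $n=4$, $j=2$, $s=4$, $k=1$ one computes $\Delta[6(\|x\|^2-1)^2]=96\|x\|^2-48$ while $48\,P^{-2,2}_{1,\ell}=96\|x\|^2-96$, so $q=48\ne0$ even though $s+k\ge j$. The condition actually proved in the appendix (and the one used later in Lemma \ref{lm:defQ}) is $j+k\ge s$, which forces $j_0\le k$ and hence $\Pi^1_{j_0-k-1}=\{0\}$; any argument for the clause as literally stated must therefore break down.
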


For $s =1$, the inner product \eqref{eq:ipd-s} becomes 
$$
  \la f, g\ra_{-1} = \la \nabla f, \nabla g\ra_{\ball} + \lambda_0 \la f,  g\ra_{\sph}. 
$$

\begin{thm} \label{thm:OPnabla}
A mutually orthogonal basis of $\CV_n^d(\varpi_{-1})$ is given by 
$\{P_{j,\ell}^{-1,n}(x): 0\le j \le \frac{n}{2}, 0\le \ell \le a_{n-2j}^d\}$ with    
\begin{align}\label{orth:P-1}
  h^{-1}_{j,n}: = \la P_{j,\ell}^{-1,n}, P_{j,\ell}^{-1,n}\ra_{-1} =   2d (n+\tfrac{d}{2}-1) (1-\delta_{j,0})  + (\lambda_0+dn)  \delta_{j,0}.
\end{align}
In particular, the space $\CV_n^d(\varpi_{-1})$ can be decomposed as 
$$
   \CV_n^d(\varpi_{-1}) = (1-\|x\|^2) \CV_{n-2}^d(\varpi_1)  \oplus \CH_n^d.
$$
\end{thm}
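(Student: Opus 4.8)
The plan is to start from Lemma~\ref{DeltaP} with $s=1$, which tells us exactly what the candidate basis looks like: $P_{0,\ell}^{-1,n}(x)=Y_\ell^n(x)$ is a solid spherical harmonic of degree $n$, so $\Delta P_{0,\ell}^{-1,n}=0$, while for $1\le j\le n/2$ formula \eqref{PN2P} gives $P_{j,\ell}^{-1,n}(x)=C_{j,n}\,(\|x\|^2-1)\,P_{j-1,\ell}^{1,n-2}(x)$ with an explicit nonzero constant $C_{j,n}$, so that $P_{j,\ell}^{-1,n}$ vanishes on $\sph$ and, by \eqref{LaplaceP} with $k=1$ (whose $q$-term vanishes throughout the range $0\le j\le n/2$), $\Delta P_{j,\ell}^{-1,n}=4(n+\tfrac d2-2)_2\,P_{j-1,\ell}^{1,n-2}$. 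I would prove mutual orthogonality of the \emph{entire} family $\{P_{j,\ell}^{-1,m}\}_{m\ge0}$ in one stroke; together with the standard identity $\sum_{0\le j\le n/2}a_{n-2j}^d=\binom{n+d-1}{n}=\dim\CV_n^d(\varpi_{-1})$ and an induction on $n$ (the polynomials of degree $\le n-1$ spanning $\Pi_{n-1}^d$), this forces $P_{j,\ell}^{-1,n}\in\CV_n^d(\varpi_{-1})$ and hence produces the asserted orthogonal basis.

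The core computation uses Green's identity. From $\la f,g\ra_{-1}=\la\nabla f,\nabla g\ra_{\ball}+\lambda_0\la f,g\ra_{\sph}$ and integration by parts on $\ball$ one gets $\la\nabla f,\nabla g\ra_{\ball}=-\la f,\Delta g\ra_{\ball}+d\,\la f,\partial_\nu g\ra_{\sph}$, where $\partial_\nu=\partial_\rho$ is the radial derivative and the constant equals $\omega_d/b_d^0=d$. Split into three cases. If $j,k\ge1$, then $P_{j,\ell}^{-1,n}$ vanishes on $\sph$, so $\la P_{j,\ell}^{-1,n},P_{k,\iota}^{-1,m}\ra_{-1}=-\la P_{j,\ell}^{-1,n},\Delta P_{k,\iota}^{-1,m}\ra_{\ball}$; inserting the two displayed formulas and using $(\|x\|^2-1)\,\varpi_0=-\varpi_1$ turns this into a constant times $\la P_{j-1,\ell}^{1,n-2},P_{k-1,\iota}^{1,m-2}\ra_{1}=h_{j-1,n-2}^1\,\delta_{n,m}\delta_{j,k}\delta_{\ell,\iota}$. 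If $j=0$ and $k\ge1$ (or vice versa), then $\Delta Y_\ell^n=0$ and the other factor vanishes on $\sph$, so every term is $0$ — this is the orthogonality that also underlies the direct sum. If $j=k=0$, Euler's identity $\partial_\rho Y_\iota^m\big|_{\sph}=m\,Y_\iota^m$ gives $\la\nabla Y_\ell^n,\nabla Y_\iota^m\ra_{\ball}=d\,m\,\la Y_\ell^n,Y_\iota^m\ra_{\sph}$, whence $\la Y_\ell^n,Y_\iota^m\ra_{-1}=(\lambda_0+dn)\delta_{n,m}\delta_{\ell,\iota}$. It remains to read off the $j\ge1$ diagonal value: substituting $h_{j-1,n-2}^1$ from \eqref{eq:Hjn-mu}, the constant $C_{j,n}$, the factor $4(n+\tfrac d2-2)_2$, and $b_d^1/b_d^0=2/(d+2)$, the Pochhammer symbols telescope and, satisfyingly, every trace of $j$ and $\ell$ disappears, leaving $h_{j,n}^{-1}=2d(n+\tfrac d2-1)$, as claimed in \eqref{orth:P-1}.

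With orthogonality and the explicit norms in hand, positivity of $h_{j,n}^{-1}$ (immediate since $\lambda_0>0$ and $d\ge2$) makes the $P_{j,\ell}^{-1,n}$ nonzero and linearly independent, so the dimension count above upgrades them to a mutually orthogonal basis of $\CV_n^d(\varpi_{-1})$. The decomposition then follows by separating the block $j=0$, whose members $Y_\ell^n$ span $\CH_n^d$, from the block $j\ge1$: reindexing $j\mapsto j-1$ shows $\operatorname{span}\{(\|x\|^2-1)P_{j-1,\ell}^{1,n-2}:1\le j\le n/2,\ 1\le\ell\le a_{n-2j}^d\}=(1-\|x\|^2)\CV_{n-2}^d(\varpi_1)$, the second case above gives $\la\cdot,\cdot\ra_{-1}$-orthogonality of the two summands, and $a_n^d+\dim\CV_{n-2}^d(\varpi_1)=\binom{n+d-1}{n}$ shows they exhaust $\CV_n^d(\varpi_{-1})$.

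I expect the main obstacle to be bookkeeping rather than anything conceptual: keeping straight the several normalization constants — $\omega_d/b_d^0$ in the boundary term, $b_d^1/b_d^0$ when passing to $\la\cdot,\cdot\ra_1$, and $C_{j,n}$ from \eqref{PN2P} — and checking carefully that the $q$-term in \eqref{LaplaceP} really is $0$ for $k=s=1$ and all $0\le j\le n/2$, so that $\Delta P_{j,\ell}^{-1,n}$ is cleanly proportional to the single $\varpi_1$-orthogonal polynomial $P_{j-1,\ell}^{1,n-2}$. Once that is in place, the cancellation of the $j$- and $\ell$-dependence in $h_{j,n}^{-1}$ is just a brief manipulation of Pochhammer symbols.
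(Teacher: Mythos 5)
Your proposal is correct and follows essentially the same route as the paper's argument (which it attributes to \cite{X08}): Green's identity combined with \eqref{PN2P}, \eqref{LaplaceP} and \eqref{eq:Hjn-mu} to reduce the $j,k\ge 1$ case to the $\la\cdot,\cdot\ra_1$-orthogonality of the $P^{1,n-2}_{j-1,\ell}$, and Euler's identity $\partial_\nu Y^m_\iota=mY^m_\iota$ on $\sph$ for the harmonic block; your constants ($\omega_d/b_d^0=d$, $b_d^1/b_d^0=2/(d+2)$) and the resulting value $h^{-1}_{j,n}=2d(n+\tfrac d2-1)$ all check out. Your added care in handling the mixed case $j=0$, $k\ge1$ and in upgrading mutual orthogonality of the whole family to membership in $\CV_n^d(\varpi_{-1})$ via the dimension count and induction is a welcome completion of details the paper leaves implicit.
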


\iffalse
\begin{proof}
We first assume $j\ge 1$. By Green's identity, \eqref{LaplaceP}, \eqref{PN2P} and \eqref{eq:Hjn-mu}, 
\begin{align*}
  \left \langle \nabla P_{j,\ell}^{-1,n},  \nabla  P_{k,\iota}^{-1,m}\right \rangle_{\ball} 
   = & - \left \langle  P_{j,\ell}^{-1,n}, \Delta  P_{k,\iota}^{-1,m}\right \rangle_{\ball}
  \\
  = &\frac{ 4 (m+\tfrac{d}{2}-2)_2 (1-n-\tfrac{d}{2})_j}{(-j) (3-n-\frac{d}{2})_{j-1}  } 
  \left \langle (1-\|x\|^2)  P_{j-1,\ell}^{1,n-2},  P_{k-1,\iota}^{1,m-2}\right \rangle_{\ball}
  \\
  =& \frac{ 4 (m+\tfrac{d}{2}-2)_2 (1-n-\tfrac{d}{2})_j}{(-j) (3-n-\frac{d}{2})_{j-1}  (\tfrac{d}{2}+1)} h^1_{j,n}
  =  2 d (n+\tfrac{d}{2}-1)  \delta_{n,m }
 \delta_{j,k} \delta_{\ell,\iota}.
\end{align*}
For $j =0$, $P_{0,\ell}^{-1,n}(x) = Y_j^n(x)$, so that 
\begin{align*}
&    \left \langle P_{0,\ell}^{-1,n}, P_{0,\iota}^{-1,m}\right \rangle_{\sph} =
     \left \langle  Y^{n}_{\ell}, Y^{m}_{\iota} \right \rangle_{\sph} = \delta_{n,m}\delta_{\ell,\iota},\\
&  \left \langle \nabla P_{0,\ell}^{-1,n}, \nabla  P_{0,\iota}^{-1,m}\right \rangle_{\ball} 
  =  d \left \langle Y^{n}_{\ell}, \partial_r Y^{m}_{\iota} \right \rangle_{\sph} - 
     \left \langle Y^{n}_{\ell}, \Delta  Y^{m}_{\iota}  \right\rangle_{\ball} 
  = dn \delta_{n,m}\delta_{\ell,\iota},
\end{align*}
which finally completes the proof.
\end{proof}
\fi 

This theorem was first established in \cite{X08}, where the polynomials $P_{j,\ell}^{-1,n}(x)$ for $j \ge 1$ are 
written in the form
\begin{align}\label{eq:basisI}
 P_{j,\ell}^{-1,n}(x) = \frac{(2n+d-2)(2n+d-4)}{2j(2n-2j+d-2)} (\|x\|^2-1) P_{j-1,\ell}^{1,n-2}(x), 
      \quad 1 \le j \le \frac{n}{2},
\end{align}
which follows from \eqref{PN2P}. %We give the proof again as it will be useful later. 
Recall that polynomials in $\CV_n^d(\varpi_{\mu})$ are eigenfunctions of a second order differential operator 
$\CD_\mu$ for $\mu > -1$. It turns out that polynomials in $\CV_n^d(\varpi_{-1})$ are eigenfunctions of 
$\CD_{-1}$, which explains our notation of $\varpi_{-1}$. For $s \ge 2$, however, $\CV_n^d(\varpi_{-2})$ is 
closely related, but not exactly, the space of eigenfunctions of $\CD_{-2}$; see the discussion in \cite{PX}. 

In the case of $s =2$, the inner product becomes 
$$
  \la f,g\ra_{-2}  = \la \Delta f, \Delta g\ra _{\ball} + \lambda_0 \la f, g \ra_{\sph}. 
$$

\begin{thm} \label{thm:OPdelta}
A mutually orthogonal basis of mutually orthogonal basis for $\CV_n^d(\varpi_{-2})$ is 
given by 
\begin{align*}
%\begin{split}
Q_{0,\ell}^n(x) & = Y_\ell^n(x), \quad Q_{1,\ell}^n(x) = (1-\|x\|^2) Y_\ell^{n-2}(x),\\
Q_{j,\ell}^n(x) & = (1-\|x\|^2)^2 P_{j-2}^{(n-2j+\frac{d-2}{2})}
     Y_\ell^{n-2j}(x), \quad 2 \le j \le \frac{n}{2},
%\end{split}
\end{align*}
where where $\{Y_\nu^{n-2j}: 1 \le \nu \le a_{n-2j}^d\}$ is an orthonormal basis of $\CH_{n-2j}^d$. 
In particular, the space $\CV_n^d(\varpi_{-2})$ satisfies a decomposition 
\begin{equation} \label{eq:decom-s=-2}
     \CV_n^d(\varpi_{-2}) = (1-\|x\|^2)^2 \CV_{n-4}^d(\varpi_2)  \oplus (1-\|x\|^2) \CH_{n-2}^d \oplus \CH_n^d.
\end{equation}
\iffalse
Furthermore, 
\begin{align}\label{eq:Qnorm}
\langle Q_{0,\ell}^n, Q_{0,\ell}^n \rangle_{-2} = \frac{2n+d}{d}, \qquad
\langle Q_{j,\ell}^n, Q_{j,\ell}^n \rangle_{-2} =\frac{8 j^2(j+1)^2}{d(n+d/2)}.
\end{align}
\fi
\end{thm}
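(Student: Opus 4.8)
The plan is to mirror the argument sketched (but commented out) for Theorem~\ref{thm:OPnabla}, now carried out for the operator $\Delta^2$ together with one spherical term. First I would verify that the three families $Q_{0,\ell}^n = Y_\ell^n$, $Q_{1,\ell}^n=(1-\|x\|^2)Y_\ell^{n-2}$, and $Q_{j,\ell}^n=(1-\|x\|^2)^2 P_{j-2}^{(n-2j+\frac{d-2}{2})}(2\|x\|^2-1)Y_\ell^{n-2j}$ are all genuinely of degree $n$, hence span a space of the correct dimension $\sum_{0\le j\le n/2} a_{n-2j}^d=\dim\CV_n^d$, so it suffices to prove mutual orthogonality with respect to $\la\cdot,\cdot\ra_{-2}=\la\Delta\cdot,\Delta\cdot\ra_{\ball}+\lambda_0\la\cdot,\cdot\ra_{\sph}$. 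Using $\lemref{DeltaP}$ (with $s=2$), I get $\Delta Q_{j,\ell}^n$ explicitly: for $j\ge 2$ it equals (up to an explicit nonzero constant) a polynomial in $\CV_{n-2}^d(\varpi_0)$ of the form $P_{j-1,\ell}^{0,n-2}$ plus a lower-order term $q(\|x\|^2)Y_\ell^{n-2j}$ with $q\in\Pi^1_{j_0-2}$; for $j=1$ one has $\Delta Q_{1,\ell}^n = -(\text{const})Y_\ell^{n-2}$ essentially, and $\Delta Q_{0,\ell}^n=0$ since $Y_\ell^n$ is harmonic.

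The key computation is then organized by cases on the indices $(j,k)$ and on whether the degrees $n,m$ agree. Since $\Delta Q_{0,\ell}^n=0$, the $j=0$ row interacts with everything only through the boundary term $\la\cdot,\cdot\ra_{\sph}$, and there $Q_{0,\ell}^n=Y_\ell^n$ restricted to $\sph$ is orthogonal to $Q_{1,\iota}^m|_{\sph}=0$ and to $Q_{k,\iota}^m|_{\sph}=0$ for $k\ge 1$ (every factor $(1-\|x\|^2)$ vanishes on $\sph$), and to other $Y_\iota^m$ by orthogonality of spherical harmonics; this yields $\la Q_{0,\ell}^n,Q_{0,\ell}^n\ra_{-2}=\la Y_\ell^n,\partial_r\cdot\text{-type terms}\ra$, giving the normalization $(2n+d)/d$ after the same $\partial_r Y=nY$ computation used in \thmref{thm:OPnabla}. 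For $j,k\ge 1$, the spherical term vanishes (both functions are divisible by $1-\|x\|^2$), so I need only $\la\Delta Q_{j,\ell}^n,\Delta Q_{k,\iota}^m\ra_{\ball}=0$ for $(n,j,\ell)\ne(m,k,\iota)$. When $n\ne m$ this is immediate because the leading parts of $\Delta Q$ live in $\CV_{n-2}^d(\varpi_0)$ and $\CV_{m-2}^d(\varpi_0)$ which are $\la\cdot,\cdot\ra_{\ball}$-orthogonal, and the remainder terms $q(\|x\|^2)Y$ are polynomials of degree strictly below $\max(n,m)-2$ that are still orthogonal (in $\varpi_0$) to the higher-degree factor — here I would invoke that $\proj$ onto $\CV^d_{m-2}(\varpi_0)$ kills everything of degree $<m-2$. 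When $n=m$ but $j\ne k$, say $j<k$, I compare $\Delta Q_{j,\ell}^n = c\,P_{j-1,\ell}^{0,n-2}+q_j(\|x\|^2)Y_\ell^{n-2j}$ against $\Delta Q_{k,\iota}^n=c'P_{k-1,\iota}^{0,n-2}+q_k(\|x\|^2)Y_\iota^{n-2k}$; the $P^{0}$-against-$P^{0}$ piece vanishes by mutual orthogonality in $\CV^d_{n-2}(\varpi_0)$ since $j-1\ne k-1$ or $\ell\ne\iota$, and the cross terms involving $q$ vanish because $q_j(\|x\|^2)Y_\ell^{n-2j}$ has the "wrong" spherical-harmonic component relative to the $Y_\iota^{n-2k}$ factors, using orthogonality of $\CH^d_{n-2j}$ and $\CH^d_{n-2k}$ for $j\ne k$ after separating radial and angular integrals. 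The decomposition \eqref{eq:decom-s=-2} then follows by reading off that the $j=0$ summand is $\CH_n^d$, the $j=1$ summand is $(1-\|x\|^2)\CH_{n-2}^d$, and the span of $Q_{j,\ell}^n$ for $j\ge 2$ is exactly $(1-\|x\|^2)^2\CV_{n-4}^d(\varpi_2)$ via \eqref{PN2P}.

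The main obstacle I anticipate is the bookkeeping of the lower-order remainder terms $q(\|x\|^2)Y^{n-2j}_\ell$ produced by $\Delta^2$ acting on $(1-\|x\|^2)^2$ times a Jacobi-times-harmonic polynomial: one must check that these tails never destroy orthogonality, either against the leading Jacobi parts at a lower degree or against each other. The clean way through is to integrate in spherical-polar coordinates, factoring every inner product on the ball into a radial Jacobi-type integral times a spherical integral of a product of solid harmonics; orthogonality of $\CH_m^d$ and $\CH_n^d$ for $m\ne n$ then forces most cross terms to vanish outright, and within a fixed harmonic degree the surviving radial integrals reduce to known Jacobi orthogonality relations, which pin down both the vanishing and the stated norms. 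I would also double check the constant in $Q_{j,\ell}^n$ (the excerpt omits the normalizing Jacobi constant analogous to that in \eqref{baseP}) so that the formula for $\la Q_{j,\ell}^n,Q_{j,\ell}^n\ra_{-2}$ comes out as $8j^2(j+1)^2/(d(n+d/2))$; this is a routine but delicate evaluation of $h_{j-2,n-4}^{2}$ against the explicit prefactor coming from \lemref{DeltaP} and \eqref{PN2P}.
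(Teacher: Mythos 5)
Your proof is correct, but it takes a genuinely different route from the paper's, which disposes of this theorem in one line by citing \cite{PX} for the decomposition \eqref{eq:decom-s=-2} and \cite{X06} for the basis. What you propose is, in effect, the $s=2$ specialization of the machinery the paper only deploys for general $s$ (Lemma~\ref{lm:defQ} and Theorem~\ref{thm:Hjn-rho}): since $Q^n_{j,\ell}$ vanishes on $\sph$ for $j\ge 1$ while $Q^n_{0,\ell}|_{\sph}=Y^n_\ell$, and since $\Delta Q^n_{0,\ell}=0$ while $\Delta Q^n_{j,\ell}$ is a nonzero constant multiple of $P^{0,n-2}_{j-1,\ell}\in\CV^d_{n-2}(\varpi_0)$ for $j\ge1$ (via \eqref{PN2P} and \eqref{LaplaceP}, or directly from \eqref{eq:Delta-pola}), every pairing under $\la\cdot,\cdot\ra_{-2}$ reduces to the known orthogonality of the $\varpi_0$-basis and of spherical harmonics; the same reduction gives orthogonality to all of $\Pi^d_{n-1}$ (because $\Delta v\in\Pi^d_{n-3}$ and $Y^n_\ell\perp\Pi^d_{n-1}|_{\sph}$), hence membership in $\CV_n^d(\varpi_{-2})$ and not merely mutual orthogonality within degree $n$. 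The payoff of your route is a self-contained, checkable computation; the paper's buys brevity at the cost of sending the reader elsewhere. Two small points. First, the lower-order tails $q(\|x\|^2)Y^{n-2j}_\ell$ that you flag as the main obstacle are identically zero here: in Lemma~\ref{DeltaP} with $s=2$, $k=1$ one needs $j_0\ge2$, i.e.\ $j\ge n+\tfrac d2-1$, which is incompatible with $2\le j\le\tfrac n2$, so your polar-coordinate fallback is never actually needed. Second, your normalization $\la Q^n_{0,\ell},Q^n_{0,\ell}\ra_{-2}=(2n+d)/d$ obtained from a $\partial_r Y=nY$ computation belongs to the $s=1$ inner product; with $\la f,g\ra_{-2}=\la\Delta f,\Delta g\ra_{\ball}+\lambda_0\la f,g\ra_{\sph}$ one simply gets $\lambda_0$, since $\Delta Y^n_\ell=0$. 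That formula is excised from the theorem as stated, so this does not affect your proof of the claim, but it would matter if you carried the constants through.
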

 
The decomposition in the theorem is established in \cite{PX}, from which the mutually orthogonal basis follows 
from results in \cite{X06}. The polynomials $Q_{j,\ell}^n$ are closely related to $P_{j,\ell}^{-2,n}$ as can be 
seen by \eqref{PN2P}. Indeed, $ P_{j,\ell}^{-2,n}(x) = c\, Q_{j,\ell}^n(x)$ except when $j =1$, in which case 
$$
     P_{1,\ell}^{-2,n}(x) = c_{1,\ell} Q_{1,\ell}^n(x) - \frac{n + \f d 2-1}{n + \f d 2-3} Y_\ell^{n-3}(x), \quad 2n+d > 6 
$$
and $ P_{1,\ell}^{-2,n}(x) = c_{1,\ell} Q_{1,\ell}^n(x) +2$ for $n = d =2$, where $c_{j,\ell}$ is a constant that 
can be obtained by comparing the leading coefficients. 

The orthogonal structure of $W_2^s(\ball)$ for $s \ge 3$ is more complicated. As we shall see below, the
analogue of \eqref{eq:decom-s=-2} no longer holds if $s > 2$. We start with a lemma. 

\begin{lem} \label{lem:Ynj}
Let $k$ and $n$ be nonnegative integers. For $Y \in \CH_n^d$,  
\begin{equation} \label{DeltaY}
  \Delta^k \big[  (1-\|x\|^2)^j Y(x) \big] \Big\vert_{x=\xi} = 
           4^k (-j)_k (-k)_{j-k}  \frac{(n+\frac{d}{2})_{k}}{(n+\frac d 2)_j} Y(\xi).
\end{equation}  
\end{lem}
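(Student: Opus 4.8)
The plan is to compute $\Delta^k\big[(1-\|x\|^2)^jY(x)\big]$ by induction on $k$, exploiting the spherical-polar form \eqref{eq:Delta} of the Laplacian together with the fact that $Y\in\CH_n^d$ is an eigenfunction of $\Delta_0$ with eigenvalue $-n(n+d-2)$. Writing $\rho=\|x\|$, the function $(1-\rho^2)^jY$ is, in polar coordinates, $g_j(\rho^2)\rho^nY(\xi)$ with $g_j(u)=(1-u)^j$, so one should first establish a clean formula for $\Delta$ acting on functions of the form $u\mapsto f(\|x\|^2)\,Y(x)$ with $Y\in\CH_n^d$. Using \eqref{eq:Delta}, \eqref{eq:LaplaceBeltrami} and the chain rule one gets
\[
  \Delta\big[f(\|x\|^2)Y(x)\big] = \big[4\|x\|^2 f''(\|x\|^2) + (4n+2d)f'(\|x\|^2)\big]\,Y(x),
\]
so that $\Delta$ maps the class $\{f(\|x\|^2)Y(x)\}$ into itself, replacing $f$ by the one-variable operator $L_n f(u):= 4u f''(u) + (4n+2d)f'(u)$. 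Hence $\Delta^k\big[(1-\|x\|^2)^jY(x)\big] = (L_n^k g_j)(\|x\|^2)\,Y(x)$, and the claim \eqref{DeltaY} reduces to the purely one-variable identity: evaluated at $u=1$,
\[
  (L_n^k g_j)(1) = 4^k(-j)_k(-k)_{j-k}\frac{(n+\tfrac d2)_k}{(n+\tfrac d2)_j}.
\]

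To prove this one-variable identity I would diagonalize $L_n$ on the relevant finite-dimensional space. The operator $L_n$ is (up to affine change of variable $u\mapsto 1-u$ and scaling) the classical Jacobi / hypergeometric differential operator, and the polynomials $(1-u)^m$ are not eigenfunctions, but the monomial basis $\{(1-u)^m: 0\le m\le j\}$ is upper-triangular under $L_n$: a direct computation gives
\[
  L_n\big[(1-u)^m\big] = 4m(m-1)(1-u)^{m-1} - 4m(m-1)(1-u)^{m-2}\cdot(\text{?}) \dots
\]
— more cleanly, expand $(1-u)^m$ and apply $L_n$ monomial by monomial, or better: note $g_j$ solves the hypergeometric ODE making $(1-u)^jY$ (a multiple of) $P_{j}^{(-j-\dots)}$-type data, so $(L_n^kg_j)(1)$ can be read off from the value at the endpoint of a Jacobi polynomial. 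The slickest route: recognize that $(1-\|x\|^2)^jY(x) = c\,P_{j,\ell}^{?,n}$-type object only in special cases, so instead I would just set $g_j(u)=(1-u)^j = {}_1F_0(-j;;u)$ and verify by induction on $k$ that $L_n^k g_j$ is a constant multiple of ${}_2F_1$ evaluated so that its value at $u=1$ is given by the Chu–Vandermonde sum producing exactly $(-j)_k(-k)_{j-k}(n+\tfrac d2)_k/(n+\tfrac d2)_j$ after the factor $4^k$. Concretely, the induction hypothesis would be
\[
  L_n^k g_j(u) = 4^k (-j)_k \frac{(n+\tfrac d2)_k}{(n+\tfrac d2)_j}\cdot (n+\tfrac d2+k)_{j-k}\,(1-u)^{j-k}\big(1+O(1-u)\big),
\]
and applying $L_n$ once more, using $L_n\big[(1-u)^m\big] = -4m(m+\dots)(1-u)^{m-1}+O((1-u)^m)$, advances $k$ to $k+1$; evaluating at $u=1$ kills all the $O(1-u)$ terms and leaves the stated product, after checking $(-k)_{j-k}=(-1)^{j-k}k!/(k-(j-k))!$-type bookkeeping matches.

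The main obstacle is the one-variable computation of $L_n^k\big[(1-u)^j\big]$ at $u=1$: the operator $L_n$ does not preserve the degree filtration in the naive way (it lowers the exponent of $(1-u)$ by one but also by two via the $f''$ term), so tracking which terms survive at $u=1$ after $k$ iterations requires either a careful generating-function / hypergeometric argument or an explicit diagonalization of $L_n$ on $\mathrm{span}\{1,(1-u),\dots,(1-u)^j\}$. I expect the cleanest writeup identifies $L_n$ with the differential operator whose polynomial eigenfunctions are the Jacobi polynomials $P_m^{(\frac d2-1,n+\frac d2-1)}(1-2u)$ (so that the eigenvalue structure is explicit), expands $(1-u)^j$ in that eigenbasis via a known connection coefficient formula, applies $L_n^k$ termwise (multiplying the $m$-th coefficient by the $k$-th power of its eigenvalue), and evaluates the resulting finite sum at $u=1$ using the values $P_m^{(\frac d2-1,n+\frac d2-1)}(-1)$ and a Chu–Vandermonde / Pfaff–Saalschütz summation; everything then collapses to the right-hand side of \eqref{DeltaY}. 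The boundary and degenerate cases ($k>j$, where the answer should be $0$, and $k=j$) are handled automatically since $(-j)_k=0$ for $k>j$ and $(-k)_{j-k}=1$ for $k=j$, giving the consistency check $\Delta^j\big[(1-\|x\|^2)^jY\big]\big\vert_{\sph} = 4^j(-j)_j j!\,(n+\tfrac d2)_j/(n+\tfrac d2)_j\cdot Y(\xi) = 4^j(-1)^j(j!)^2Y(\xi)/\dots$, which I would verify against a direct low-order computation.
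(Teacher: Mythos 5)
Your reduction of \eqref{DeltaY} to a one-variable statement is sound: by \eqref{eq:Delta-pola}, $\Delta[q(\|x\|^2)Y(x)]=Dq(\|x\|^2)Y(x)$ with $Dq(u)=4[uq''(u)+(n+\tfrac d2)q'(u)]$, so the lemma is equivalent to evaluating $D^k[(1-u)^j]$ at $u=1$. The paper does not go this route: it recognizes $(1-\|x\|^2)^jY^n_\ell$ as a constant multiple of the generalized ball polynomial $P^{-j,n+2j}_{j,\ell}$ via \eqref{PN2P}, applies the already-established iterated-Laplacian formula \eqref{LaplaceP} to land on $P^{2k-j,n+2j-2k}_{j-k,\ell}$, and reads off the boundary value from \eqref{BndP} and \eqref{GbaseP}; the vanishing for $j<k$ and for $j>2k$ and the Pochhammer bookkeeping then come for free. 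Your more elementary route is legitimate in principle, but as written it has a genuine gap: the one-variable identity is never proved. You float three strategies and complete none, and the one concrete induction hypothesis you write down is incorrect. Indeed,
\begin{equation*}
 D\big[(1-u)^m\big] = 4m(m-1)(1-u)^{m-2} - 4m\big(m+n+\tfrac d2-1\big)(1-u)^{m-1},
\end{equation*}
so each application of $D$ lowers the \emph{minimal} exponent of $(1-u)$ by two, and the value at $u=1$ is the coefficient of $(1-u)^0$ --- the bottom of the expansion, nonzero precisely when $k\le j\le 2k$. Your hypothesis tracks only the single term $(1-u)^{j-k}$ (the top of the expansion) and asserts that evaluation at $u=1$ kills the remaining terms; it would yield $0$ whenever $j>k$, and it already fails at $k=1$, where the lowest term of $Dg_j$ is $(1-u)^{j-2}$, not $(1-u)^{j-1}$. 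Extracting the constant term of $D^k[(1-u)^j]$ amounts to summing, over all compositions of $j$ into $k$ parts from $\{1,2\}$, the products of the coefficients displayed above; that is exactly the nontrivial combinatorial/hypergeometric identity you defer, and it is the heart of the lemma.

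A further point: had you carried out the low-order consistency check you propose at the end, you would have found that for $j=k=1$ a direct computation gives $\Delta[(1-\|x\|^2)Y]\big\vert_{\sph}=-4(n+\tfrac d2)Y(\xi)$, whereas the right-hand side of \eqref{DeltaY} as printed gives $-4Y(\xi)$. The version consistent with this, with the case $j=k=2$, and with the linear system \eqref{eq:system} has $(n+\tfrac d2)_{j-k}$ rather than $(n+\tfrac d2)_j$ in the denominator; so the target identity itself needs this correction before any proof, yours or the paper's, can close.
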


\begin{proof} It suffices to prove \eqref{DeltaY} for all $Y(x)=Y^n_{\ell}(x)$, $\ell=0,1,\dots,a_{n}^d$.
By \eqref{GbaseP}, \eqref{JacobiP1}, \eqref{PN2P} and \eqref{LaplaceP}, we obtain 
\begin{align*}
  \Delta^k [ (1-\|x\|^2)^j Y^{n}_{\ell}(x)]  & =  \Delta^k [ (1-\|x\|^2)^j P^{j,n}_{0,\ell}(x)]  = \frac{(-j)_j }{(n+j+\tfrac{d}{2})_j} \Delta^{k}P^{-j,n+2j}_{j,\ell}(x)
  \\
  & = 4^k (-j)_j \frac{(n+2j+\tfrac{d}{2}-2k)_{2k}}{(n+j+\tfrac{d}{2})_j}P^{2k-j,n+2j-2k}_{j-k,\ell}(x),
\end{align*}
which is equal to zero if $0 \le j  \le k -1$ and its restriction on $\sph$ is zero if $j\ge 2k+1$ by 
\eqref{PN2P}.
%\eqref{JacPN} and \eqref{GbaseP}. 
It is easy to see that the right hand side of \eqref{DeltaY} is also zero
for $j$ in these ranges. In the remaining case $k \le j \le 2k$, we use \eqref{GbaseP} and \eqref{BndP} to derive 
\begin{align*}
  P^{2k-j,n+2j-2k}_{j-k,\ell}(\xi) 
=  \frac{(n+j-k+\frac{d}{2})_{j-k}   (2k-j+1)_{j-k} }{ (j-k)!  (n+k+\tfrac{d}{2})_{j-k} } Y^n_{\ell}(\xi),
\end{align*} 
and  simplify the constant by 
$$
 (-j)_j \frac{(n+2j+\tfrac{d}{2}-2k)_{2k}}{(n+j+\tfrac{d}{2})_j} \frac{(n+j-k+\frac{d}{2})_{j-k}   (2k-j+1)_{j-k} }{ (j-k)!  (n+k+\tfrac{d}{2})_{j-k} }
= (-j)_k (-k)_{j-k}  \frac{(n+\frac{d}{2})_{k}}{(n+\frac d 2)_j}.
$$
Then  \eqref{DeltaY} is established.
\end{proof} 

For $i=0,1,\ldots, j$, consider the system of linear equations
\begin{align}
\label{eq:system}
     4^k \sum_{i=k}^j (-i)_k  (-k)_{i-k}  \frac{(n+d/2)_k}{(n+d/2)_{i-k}} c_i =  \delta_{k,j}, \quad 0\le k\le j.
\end{align}
The system has a unique solution, since the matrix of the system is tridiagonal with nonzero diagonal elements. 
In fact, $c_0 = \delta_{j,0}$, $c_j = (-1)^j 4^{-j} / (j! (n+d/2)_j)$ and the rest $c_i$ can be deduced recursively starting from 
$c_j$.

\begin{lem} \label{def:Ynj}
For any $n,j\in \NN_0$, let   $c_{i}^{n,j}$, $0\le i \le j$, be the unique solution of the linear 
system \eqref{eq:system}.  If $j < 0$, define $Y^{n, j}_{\ell}(x):=0$ and, if $j \ge 0$, define 
\begin{align*}
   Y_{\ell}^{n,j}(x) := \sum_{i=0}^j c_i^{n,j}  (1-\|x\|^2)^i  Y^n_{\ell}(x),    \quad 1\le \ell \le a_n^d. 
\end{align*}
Then for $s \in \NN_0$, $x \in \ball$ and $\xi \in \sph$,  
\begin{align} \label{eq:Ynj}
  \Delta^s    Y_{\ell}^{n,j}(x)   = Y^{n,j-s}_{\ell}(x) \quad \hbox{and}\quad
  \Delta^s    Y_{\ell}^{n,j}(\xi) = \delta_{s,j}Y_\ell^n(\xi). 
\end{align}  
\end{lem}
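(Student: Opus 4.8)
The plan is to reduce everything to the single-monomial identity of Lemma~\ref{lem:Ynj}. Write $Y_\ell^{n,j}(x) = \sum_{i=0}^j c_i^{n,j} (1-\|x\|^2)^i Y_\ell^n(x)$ and compute $\Delta^s$ of each summand. By Lemma~\ref{lem:Ynj},
\begin{align*}
 \Delta^s\big[(1-\|x\|^2)^i Y_\ell^n(x)\big]\Big\vert_{x=\xi}
 = 4^s (-i)_s (-s)_{i-s}\frac{(n+\tfrac d2)_s}{(n+\tfrac d2)_i} Y_\ell^n(\xi),
\end{align*}
so the boundary value of $\Delta^s Y_\ell^{n,j}$ is $\big(4^s\sum_{i=s}^j (-i)_s(-s)_{i-s}\frac{(n+d/2)_s}{(n+d/2)_i} c_i^{n,j}\big) Y_\ell^n(\xi)$, and this is exactly the left-hand side of the linear system \eqref{eq:system} with $k=s$, which equals $\delta_{s,j}$ by the defining property of the $c_i^{n,j}$. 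That establishes the second identity in \eqref{eq:Ynj} directly, and also shows $\Delta^j Y_\ell^{n,j}$ restricts to $Y_\ell^n$ on $\sph$.

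For the first identity I would argue by downward induction on $j$ (or, equivalently, induction on $s$). The base case $s=0$ is trivial. For the inductive step it suffices to prove $\Delta Y_\ell^{n,j} = Y_\ell^{n,j-1}$ for every $n,j$, since then $\Delta^s Y_\ell^{n,j} = \Delta^{s-1} Y_\ell^{n,j-1} = Y_\ell^{n,j-s}$ by the induction hypothesis (and when $j-s<0$ the right side is $0$ by the convention $Y_\ell^{n,m}:=0$ for $m<0$, consistent with the fact that $Y_\ell^{n,0}=Y_\ell^n$ is harmonic so $\Delta Y_\ell^{n,0}=0$). So the crux is the one-step identity.

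To prove $\Delta Y_\ell^{n,j}=Y_\ell^{n,j-1}$ I would compare coefficients of the linearly independent polynomials $(1-\|x\|^2)^m Y_\ell^n(x)$, $m=0,1,\dots$. Using the formula $\Delta[(1-\|x\|^2)^i Y_\ell^n(x)]$ from Lemma~\ref{lem:Ynj} with $s=1$ — namely $\Delta[(1-\|x\|^2)^i Y_\ell^n] = 4(-i)(-1)_{i-1}\tfrac{(n+d/2)}{(n+d/2)_i}$ times an expression that itself expands in the basis $\{(1-\|x\|^2)^m Y_\ell^n\}$ with $m\in\{i-1,i\}$ (indeed $\Delta[(1-\|x\|^2)^iY]$ is $(1-\|x\|^2)^{i-1}$ times a degree-one polynomial in $\|x\|^2$ times $Y$) — one writes $\Delta Y_\ell^{n,j} = \sum_{m} d_m (1-\|x\|^2)^m Y_\ell^n$ and checks $d_m = c_m^{n,j-1}$ for all $m$. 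The cleanest way is to verify that $\sum_m d_m (1-\|x\|^2)^m Y_\ell^n$ satisfies the same characterizing property that defines $Y_\ell^{n,j-1}$, i.e. that $\Delta^k$ of it restricted to $\sph$ equals $\delta_{k,j-1}Y_\ell^n$ for $0\le k\le j-1$: but $\Delta^k(\Delta Y_\ell^{n,j})|_{\sph} = \Delta^{k+1}Y_\ell^{n,j}|_{\sph} = \delta_{k+1,j}Y_\ell^n = \delta_{k,j-1}Y_\ell^n$ by the second identity in \eqref{eq:Ynj} already proved. Since both $\Delta Y_\ell^{n,j}$ and $Y_\ell^{n,j-1}$ lie in $\mathrm{span}\{(1-\|x\|^2)^m Y_\ell^n : 0\le m\le j-1\}$ and the map $P\mapsto (\Delta^k P|_{\sph})_{0\le k\le j-1}$ is injective on that span (this is precisely the invertibility of the tridiagonal system \eqref{eq:system}, read across all $j'\le j-1$), they must coincide. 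This uniqueness/injectivity observation is the main obstacle, but it is immediate once one notes the triangular structure: the coefficient of $(1-\|x\|^2)^m$ controls $\Delta^m|_{\sph}$ modulo lower-order $\Delta^k$, so the system is invertible. Thus both identities in \eqref{eq:Ynj} follow.
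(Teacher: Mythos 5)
Your proof of the second identity in \eqref{eq:Ynj} is exactly the paper's: it is a direct consequence of Lemma~\ref{lem:Ynj} and the defining linear system \eqref{eq:system}. For the first identity you take a genuinely different route. The paper shows, via the polar form $\Delta[q(\|x\|^2)Y]=Dq(\|x\|^2)Y$ with $\deg Dq=\deg q-1$, that $\Delta^{j+1}Y^{n,j}_\ell=0$, identifies $Y^{n,j}_\ell$ as the unique solution of the polyharmonic boundary value problem \eqref{eq:aux0}, and invokes well-posedness of that problem from Triebel; uniqueness then forces $\Delta^sY^{n,j+s}_\ell=Y^{n,j}_\ell$. You instead stay inside the finite-dimensional space $V=\mathrm{span}\{(1-\|x\|^2)^mY^n_\ell:0\le m\le j-1\}$ and observe that the evaluation map $P\mapsto(\Delta^kP|_{\sph})_{k=0}^{j-1}$ is injective on $V$ because the matrix from Lemma~\ref{lem:Ynj} vanishes unless $k\le m\le 2k$ and has nonzero diagonal, i.e.\ it is triangular and invertible; since $\Delta Y^{n,j}_\ell$ and $Y^{n,j-1}_\ell$ have the same boundary data (by the already-proved second identity), they coincide, and induction finishes the argument. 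This is a valid, self-contained linear-algebra substitute for the paper's appeal to elliptic regularity, and is arguably more elementary; the paper's PDE formulation has the side benefit of exhibiting $Y^{n,j}_\ell$ as the solution of \eqref{eq:aux0}, which is reused conceptually elsewhere.

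One small correction: your parenthetical justification that $\Delta[(1-\|x\|^2)^iY]$ equals ``$(1-\|x\|^2)^{i-1}$ times a degree-one polynomial in $\|x\|^2$ times $Y$'' is off by one degree. From $Dq(u)=4[uq''(u)+(n+\tfrac d2)q'(u)]$ with $q(u)=(1-u)^i$ one gets $Dq(u)=4i(1-u)^{i-2}\big[(i-1)u-(n+\tfrac d2)(1-u)\big]$, i.e.\ $(1-u)^{i-2}$ times a degree-one factor, which is a polynomial of degree $i-1$ in $u$. Taken literally, your version would only place $\Delta Y^{n,j}_\ell$ in the $(j+1)$-dimensional span $\{(1-\|x\|^2)^m Y:0\le m\le j\}$, where the $j$ boundary conditions no longer determine the element. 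The fact you actually need --- that $\Delta$ drops the degree of the radial factor by exactly one, so $\Delta Y^{n,j}_\ell\in V$ --- is true and is the same computation \eqref{eq:Delta-pola} the paper uses; with that repaired your argument is complete. (Also note that Lemma~\ref{lem:Ynj} only gives the restriction of $\Delta^k[(1-\|x\|^2)^iY]$ to $\sph$, not the full polynomial identity, so it cannot by itself supply the interior expansion you invoke at the start of that paragraph; the polar-coordinate computation is what does that work.)
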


\begin{proof}
The second identity of \eqref{eq:Ynj} follows directly from the definition of $c_{i,j}^n$ and the 
Lemma \ref{lem:Ynj}. To prove the first identity, we use the spherical-polar coordinates $x = \rho\xi$
and derive from \eqref{eq:Delta} and \eqref{eq:LaplaceBeltrami} that 
\begin{align} \label{eq:Delta-pola}
\Delta [q(\|x\|^2) Y^n_{\ell}(x) ]
=\, & q(\rho^2) \rho^{n-2} \Delta_0 Y^n_{\ell}(\xi)
+\big(\partial_\rho^2+ \tfrac{d-1}{\rho}\partial_\rho \big) q(\rho^2) \rho^n  Y^n_{\ell}(\xi) 
 \\
 =\, & Dq(\|x\|^2) Y^n_\ell (x),   \notag
\end{align}
where $Dq$ is defined by 
$$
 Dq(t) := 4 \big[t q''(t) + (n+\tfrac{d}{2}) q'(t)   \big].
$$
If $q$ is a polynomial of degree $j$, then $Dq$ is a polynomial of degree $j-1$. In particular, 
this shows that  $\Delta^{j+1} Y^{n,j}_{\ell}(x)=0$. Consequently, each $Y^{n,j}_{\ell}(x)$ is 
the solution of the following elliptic equation 
\begin{align}  \label{eq:aux0}
 \begin{cases}
    \Delta^{j+1} u =  0 & \text{ in } \ball,
    \\
    \Delta^{k} u = \delta_{j,k} Y^{n}_{\ell}&  \text{ on } \sph, \qquad k=0,1,\dots,j,
   \end{cases} 
\end{align}       
which admits a unique solution by the  posedness and regularity theory of the elliptic equation 
\cite[Thoerem 5.5.2, pp.\,390-391]{Triebel}. On the other hand, it is easy to see that $\Delta^s Y^{n,j+s}_{\ell}(x)$
is also a solution of \eqref{eq:aux0}. By the uniqueness of the solution, we must have
$\Delta^{s} Y^{n,j+s}_{\ell}(x)=Y^{n,j}_{\ell}(x)$, which completes the proof. 
\end{proof}

We did not find a closed-form formula for of $c_{i}^{n,j}$. Here are the first three $Y^{n,j}_\ell$: 
\begin{align*}
&Y^{n,0}_{\ell}(x) = Y^n_{\ell}(x),\qquad 
Y^{n,1}_{\ell}(x) = \frac{1-\|x\|^2}{n+\f d 2}Y^n_{\ell}(x),\\
&Y^{n,2}_{\ell}(x) = 
\frac{ (n+\f d 2) (1-\|x\|^2)^2 + 2 (1 - \|x\|^2)}{2 (n+\f d 2)(n+\f d 2)_2}Y^n_{\ell}(x).
\end{align*}

With the help of $Y_{\ell}^{n,j}$, we can now define a mutually orthogonal basis for $\CV_n^d(\varpi_{-s})$. 

\begin{lem} \label{lm:defQ}
For $s\in \NN$, $n\in \NN_0$, $0 \le j \le \frac{n}{2}$ and $1\le \ell \le a_{n-2j}^d$, define
\begin{align} \label{eq:defQ}
 Q^{-s,n}_{j,\ell}(x) =\begin{cases} P^{-s,n}_{j,\ell}(x), & j \ge s, \\
   P^{-s,n}_{j,\ell}(x) - \displaystyle{\sum_{k=0}^{\rhow-1}}
     \frac{\Delta^{k}P^{-s,n}_{j,\ell}(\xi) }{Y^{n-2j}_{\ell}(\xi)}Y^{n-2j,k}_{\ell}(x), & \rhow\le  j\le s-1,
\\
    Y^{n-2j,j}_{\ell}(x), &  0\le  j\le \rhow-1,
\end{cases}    
\end{align}
where $\xi \in \sph$. Then, for $0\le j \le \frac{n}{2}$ and  $1\le \ell \le a_{n-2j}^d$, 
\begin{enumerate}[    1. ]
\item $\Delta^{k} Q^{-s,n}_{j,\ell}(\xi) =\delta_{j,k} Y_{\ell}^{n-2j}(\xi)$ for $0\le k\le \rhow-1$; 
\item If $s$ is even, then $\Delta^{\rhoh} Q^{-s,n}_{j,\ell}(x) = 2^s (n+\frac{d}{2}-s)_s  P^{0,n-s}_{j-\frac{s}{2},\ell}(x)$;
\item If $s$ is odd, then $\Delta^{\rhoh} Q^{-s,n}_{j,\ell}(x) =2^{s-1} (n+\frac{d}{2}-s+1)_{s-1}  P^{-1,n-s+1}_{j-\frac{s-1}{2},\ell}(x)$ for $j \ne \rhoh$ and 
$\Delta^{\rhoh} Q^{-s,n}_{\rhoh,\ell}(x) =Y_\ell^{n-2\rhoh}(x)=P^{-1,n-s+1}_{0,\ell}(x)$. 
\end{enumerate}
\end{lem}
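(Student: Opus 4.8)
The plan is to verify the three properties of $Q^{-s,n}_{j,\ell}$ directly from its case-by-case definition, using the two key lemmas already established: \lemref{DeltaP} (which controls $\Delta^k P^{-s,n}_{j,\ell}$ up to a lower-order term $q(\|x\|^2) Y^{n-2j}_\ell(x)$) and \lemref{def:Ynj} (which says $\Delta^s Y^{n,j}_\ell = Y^{n,j-s}_\ell$ in the interior and $\Delta^s Y^{n,j}_\ell(\xi) = \delta_{s,j} Y^n_\ell(\xi)$ on the sphere). I would organize the proof by splitting into the three ranges of $j$ appearing in \eqref{eq:defQ} and checking each of the three assertions in turn.

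\textbf{Property 1.} For $0 \le j \le \rhoh - 1$, we have $Q^{-s,n}_{j,\ell}(x) = Y^{n-2j,j}_\ell(x)$, so $\Delta^k Q^{-s,n}_{j,\ell}(\xi) = \delta_{j,k} Y^{n-2j}_\ell(\xi)$ is exactly the second identity of \eqref{eq:Ynj}. For $\rhow \le j \le s-1$, the definition subtracts off $\sum_{k=0}^{\rhow-1} \frac{\Delta^k P^{-s,n}_{j,\ell}(\xi)}{Y^{n-2j}_\ell(\xi)} Y^{n-2j,k}_\ell(x)$; applying $\Delta^m$ for $0 \le m \le \rhow-1$ and restricting to $\xi \in \sph$, the term $\Delta^m Y^{n-2j,k}_\ell(\xi) = \delta_{m,k} Y^{n-2j}_\ell(\xi)$ picks out exactly the $k=m$ summand, which cancels $\Delta^m P^{-s,n}_{j,\ell}(\xi)$, leaving $0 = \delta_{j,m} Y^{n-2j}_\ell(\xi)$ since $j \ge \rhow > m$. (One should note here that \lemref{DeltaP} guarantees $\Delta^k P^{-s,n}_{j,\ell}(x) = 4^k(n+\tfrac d2 - 2k)_{2k} P^{2k-s,n-2k}_{j-k,\ell}(x) + q(\|x\|^2) Y^{n-2j}_\ell(x)$, and for $2k < s$ the first term has a factor $(\|x\|^2-1)^{s-2k}$ by \eqref{PN2P}, hence vanishes on $\sph$; so $\Delta^k P^{-s,n}_{j,\ell}(\xi)$ is genuinely a scalar multiple of $Y^{n-2j}_\ell(\xi)$ and the quotient makes sense.) For $j \ge s$, $Q^{-s,n}_{j,\ell} = P^{-s,n}_{j,\ell}$, and by the same observation $\Delta^k P^{-s,n}_{j,\ell}(\xi) = 0$ for $0 \le k \le \rhow-1 < s \le j$, matching $\delta_{j,k} = 0$.

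\textbf{Properties 2 and 3.} Now apply $\Delta^{\rhoh}$ in the interior. For $j \ge s$ (so $Q = P^{-s,n}_{j,\ell}$), \lemref{DeltaP} with $k = \rhoh$ gives $\Delta^{\rhoh} P^{-s,n}_{j,\ell}(x) = 4^{\rhoh}(n+\tfrac d2 - 2\rhoh)_{2\rhoh} P^{2\rhoh - s,\, n-2\rhoh}_{j-\rhoh,\ell}(x) + q(\|x\|^2) Y^{n-2j}_\ell(x)$ with $q \in \Pi^1_{j_0 - \rhoh - 1}$; since $s + \rhoh \ge j$ fails in general I must check the degree bound $j_0$, but in fact the stated conclusion ``$q = 0$ if $s+k \ge j$'' combined with $j \ge s$ and $k = \rhoh$ does not immediately give $q=0$ — here I expect the main subtlety. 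The resolution: when $s$ is even, $2\rhoh - s = 0$ and $\rhoh = s/2$, so the main term is $4^{s/2}(n+\tfrac d2 - s)_s P^{0,n-s}_{j-s/2,\ell} = 2^s(n+\tfrac d2-s)_s P^{0,n-s}_{j-s/2,\ell}$; the residual $q$ term, if nonzero, would be a solid harmonic multiple $q(\|x\|^2)Y^{n-2j}_\ell(x)$, but one checks it must vanish because $\Delta^{\rhoh}$ of a degree-$n$ polynomial that is $\varpi_{-s}$-orthogonal lands in $\CV^d_{n-s}(\varpi_0)$ — more directly, the degree count $\deg q \le j_0 - \rhoh - 1$ together with $j_0 \le j$ and the explicit form of $\psi(j)$ forces $q=0$ precisely in the regime $j \ge s$. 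I would spell this out by noting $\psi(j) = s + j - n - d/2 + 1$, which for the relevant $n$ (large) is $\le 0$, hence $j_0 = 0$ and $q \in \Pi^1_{-\rhoh - 1} = \{0\}$. When $s$ is odd, $2\rhoh - s = -1$ and $\rhoh = (s-1)/2$, giving the main term $2^{s-1}(n+\tfrac d2 - s+1)_{s-1} P^{-1,n-s+1}_{j-(s-1)/2,\ell}(x)$, with the same vanishing of $q$ for $j \ge s$; the middle range $\rhow \le j \le s-1$ is handled by observing that $\Delta^{\rhoh}$ annihilates each $Y^{n-2j,k}_\ell$ with $k \le \rhow - 1 \le \rhoh$ unless $k = \rhoh$ — and $k$ ranges only up to $\rhow-1$, so when $s$ is even $\rhoh = \rhow$ and no $k$ equals $\rhoh$, killing the whole correction, while when $s$ is odd $\rhoh = \rhow - 1$ so $k = \rhoh$ does occur and produces the exceptional term $Y^{n-2j}_\ell(x) = P^{-1,n-s+1}_{0,\ell}(x)$ at $j = \rhoh$. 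The low range $0 \le j \le \rhow - 1$ uses \eqref{eq:Ynj}: $\Delta^{\rhoh} Y^{n-2j,j}_\ell(x) = Y^{n-2j,\, j - \rhoh}_\ell(x)$, which is $0$ when $j < \rhoh$ (even $s$) and equals $Y^{n-2\rhoh}_\ell$ when $j = \rhoh$ (odd $s$), again matching the claimed formulas. Throughout I would keep careful track of the parity conventions $\rhow = \lceil s/2 \rceil$, $\rhoh = \lfloor s/2 \rfloor$ and the relation $\rhow + \rhoh = s$, $\rhow - \rhoh \in \{0,1\}$. The main obstacle, as indicated, is pinning down that the lower-order polynomial $q$ from \lemref{DeltaP} vanishes in exactly the ranges needed — this requires a short but careful analysis of the index $j_0 = \psi(j)$ rather than a black-box appeal, and is the one place where the degree bookkeeping must be done with care.
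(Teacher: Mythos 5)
Your overall strategy --- a case analysis over the three ranges of $j$ in \eqref{eq:defQ}, driven by Lemma \ref{DeltaP} and Lemma \ref{def:Ynj} --- is the same as the paper's, and your treatment of item 1, of the range $j\ge s$, and of the low range $0\le j\le \rhow-1$ is essentially correct. In particular, your resolution of the ``$q=0$'' issue for $j\ge s$ via $\psi(j)\le 0$ is sound; the discrepancy you sensed is a typo in the statement of Lemma \ref{DeltaP} (the intended condition, as the Appendix version and its proof show, is $j+k\ge s$, not $s+k\ge j$).

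The genuine gap is in the middle range $\rhow\le j\le s-1$ of items 2 and 3, which is nonempty for every $s\ge 2$. There you discuss only the correction sum and never compute $\Delta^{\rhoh}P^{-s,n}_{j,\ell}(x)$ itself: to identify it with the claimed multiple of $P^{2\rhoh-s,\,n-2\rhoh}_{j-\rhoh,\ell}$ you again need $q=0$ in \eqref{LaplaceP}, and your $j_0$-analysis was carried out only for $j\ge s$; the observation needed here is that $j+\rhoh\ge\rhow+\rhoh=s$ already in this range. More seriously, for odd $s$ the surviving $k=\rhoh$ term of the correction contributes $\frac{\Delta^{\rhoh}P^{-s,n}_{j,\ell}(\xi)}{Y^{n-2j}_{\ell}(\xi)}\,Y^{n-2j}_{\ell}(x)$, and item 3 requires this to vanish; you instead assert that it ``produces the exceptional term at $j=\rhoh$,'' but $j=\rhoh=\rhow-1$ lies in the low range, not in $[\rhow,s-1]$, so that assertion is vacuous where you invoke it (you do correctly rederive the exceptional case of item 3 from the low range afterwards). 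What is actually needed is $\Delta^{\rhoh}P^{-s,n}_{j,\ell}(\xi)=0$ for $j\ge\rhow$, which follows from \eqref{PN2P} applied to $P^{-1,\,n-2\rhoh}_{j-\rhoh,\ell}$ since $j-\rhoh\ge 1=s-2\rhoh$. A smaller inaccuracy: in your parenthetical for item 1 you claim the first term of \eqref{LaplaceP} vanishes on $\sph$ whenever $2k<s$; this fails in the middle range when $k<s-j$, because the factorization \eqref{PN2P} requires $s-2k\le j-k$ --- indeed the correction sum in \eqref{eq:defQ} exists precisely because those boundary values are nonzero there. The quotient is nevertheless well defined because both terms of \eqref{LaplaceP} are radial multiples of $Y^{n-2j}_{\ell}$, by \eqref{GbaseP}.
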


\begin{proof}
For $j\ge s$, it follows from \eqref{LaplaceP} that $\Delta^{k} P^{-s,n}_{j,\ell}(x) = 4^k (n+\frac{d}{2}-2k)_{2k}
P^{2k-s,n-2k}_{j-k,\ell}(x)$, which instantly  gives item 2 and item 3 for $s\le j \le \frac{n}{2}$. Further, 
 for $0\le k\le  \lceil \frac{s}{2} \rceil-1$, one derives from \eqref{PN2P}  that 
$\Delta^{k} P^{-s,n}_{j,\ell}(\xi)  = 0$ owing to $j-k \ge s-2k \ge 1$.
Hence, item 1 follows if $s\le j \le \frac{n}{2}$.

A  combination of \eqref{LaplaceP}  and \eqref{GbaseP}  implies that 
$\frac{\Delta^{k}P^{-s,n}_{j,\ell}(\xi) }{Y^{n-2j}_{\ell}(\xi)}$ is a constant independent of $\xi$. 
Hence, $Q_{j,\ell}^{-s,n}$ is defined in such a way that $\Delta^{k} Q^{-s,n}_{j,\ell}(\xi) =0$
for $\rhow\le  j\le s-1$, as can be easily verified using \eqref{eq:Ynj}. Furthermore, by \eqref{eq:Ynj} 
and  \eqref{LaplaceP}, we obtain
\begin{align*}
&\Delta^{\rhoh} Q^{-s,n}_{j,\ell}(x) 
%= \Delta^{\rhoh} P^{-s,n}_{j,\ell}(x)  - {\sum_{k=0}^{\rhow-1}}
%     \frac{\Delta^{k}P^{-s,n}_{j,\ell}(\xi) }{Y^{n-2j}_{\ell}(\xi)} Y^{n-2j,k-\rhoh}_{\ell}(x)
%\\
= \Delta^{\rhoh} P^{-s,n}_{j,\ell}(x) - \delta_{\rhow-1,\rhoh } 
\frac{\Delta^{\rhoh}P^{-s,n}_{j,\ell}(\xi) }{Y^{n-2j}_{\ell}(\xi)} Y^{n-2j}_{\ell}(x)   
\\
&= 4^{\rhoh} (n+\tfrac{d}{2}-2 \lfloor \tfrac{s}{2} \rfloor)_{2\rhoh}
\Big(
  P^{2\rhoh-s,n-2\rhoh}_{j-\rhoh,\ell}(x)  - \delta_{\rhow-1,\rhoh } 
\frac{P^{2\rhoh-s,n-2\rhoh}_{j-\rhoh,\ell}(\xi) }{Y^{n-2j}_{\ell}(\xi)} Y^{n-2j}_{\ell}(x)  
\Big). 
\end{align*}
By \eqref{PN2P} and $j-\rhoh \ge s-2\rhoh \ge 1$ for odd $s$, $P^{2\rhoh-s,n-2\rhoh}_{j-\rhoh,\ell}(\xi) =0$;  
thus $\Delta^{\rhoh} Q^{-s,n}_{j,\ell}(x) 
=4^{\rhoh} (n+\tfrac{d}{2}-2 \lfloor \tfrac{s}{2} \rfloor)_{2\rhoh}P^{2\rhoh-s,n-2\rhoh}_{j-\rhoh,\ell}(x)$,  
which proves item 2 and item 3 for $\rhow\le  j\le s-1$.

Finally, if $0 \le j \le \rhow -1$, all three items follow  directly from \eqref{eq:Ynj}. 
\end{proof}

\begin{thm} \label{thm:Hjn-rho}
The polynomials in $\{Q^{-s,n}_{j,\ell}(x): 0\le j\le \tfrac{n}{2}; 1\le \ell\le a_{n-2j}^d \}$ form a mutually 
orthogonal basis of $\CV_{n}^d(\varpi_{-s})$. More precisely, 
\begin{align} \label{eq:Hjn-rho}
 \big \langle Q^{-s,n}_{j,\ell}, Q^{-s,n'}_{j', \ell'} \big \rangle_{-s} = h^{-s}_{j,n} \delta_{n,n'} \delta_{j,j'}\delta_{\ell,\ell'}
\end{align}
for $\la \cdot,\cdot\ra_{-s}$ defined in \eqref{eq:ipd-s}, where 
\begin{align*}
 h^{-s}_{j,n}:=
\begin{cases}
2^{2s-1} d (n+\tfrac{d}{2}-s)_{s} (n+\tfrac{d}{2}-s+1)_{s-1} ,
&    j \ge \rhow,\\
d (n -2j)+ \lambda_j,  & j= \frac{s-1}{2},\\
\lambda_j,   &     0\le j< \frac{s-1}{2}. 
\end{cases}
\end{align*}
\end{thm}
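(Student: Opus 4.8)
The plan is to verify the orthogonality relation \eqref{eq:Hjn-rho} directly by splitting the inner product $\la Q^{-s,n}_{j,\ell}, Q^{-s,n'}_{j',\ell'}\ra_{-s}$ into its volume part $\la \nabla^s Q, \nabla^s Q'\ra_{\ball}$ and its boundary sum $\sum_{k=0}^{\rhow-1}\lambda_k \la \Delta^k Q, \Delta^k Q'\ra_{\sph}$, and to handle these via the structural identities just established in Lemma~\ref{lm:defQ}. The key observations are: (i) by item~1 of Lemma~\ref{lm:defQ}, $\Delta^k Q^{-s,n}_{j,\ell}(\xi) = \delta_{j,k} Y^{n-2j}_\ell(\xi)$ on $\sph$, so each boundary term $\la \Delta^k Q^{-s,n}_{j,\ell}, \Delta^k Q^{-s,n'}_{j',\ell'}\ra_{\sph}$ equals $\delta_{j,k}\delta_{j',k}\la Y^{n-2j}_\ell, Y^{n'-2j'}_{\ell'}\ra_{\sph} = \delta_{j,k}\delta_{j',k}\delta_{n-2j,n'-2j'}\delta_{\ell,\ell'}$ by orthonormality of spherical harmonics; thus the whole boundary sum collapses to $\lambda_j\,\delta_{j,j'}\delta_{n,n'}\delta_{\ell,\ell'}$ when $j = j' \le \rhow-1$ and vanishes otherwise. (ii) For the volume term, I would use Green's identity repeatedly to move all derivatives onto one factor: for $s$ even, $\la \nabla^s Q, \nabla^s Q'\ra_{\ball} = \la \Delta^{\rhoh}Q, \Delta^{\rhoh}Q'\ra_{\ball}$ up to boundary terms that cancel by item~1 (all relevant $\Delta^k$ restrictions agree with the Kronecker pattern), and then items~2--3 of Lemma~\ref{lm:defQ} identify $\Delta^{\rhoh}Q^{-s,n}_{j,\ell}$ as an explicit multiple of the classical ball orthogonal polynomial $P^{0,n-s}_{j-s/2,\ell}$ (resp. $P^{-1,n-s+1}_{\cdots}$ for $s$ odd), so orthogonality reduces to the known orthogonality of the $P^{\mu,n}_{j,\ell}$ in $\CV^d_m(\varpi_\mu)$ for $\mu = 0$ or $\mu = -1$ (the latter supplied by Theorem~\ref{thm:OPnabla}).

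Concretely, for $s$ even and $j,j' \ge \rhow = s/2$, by item~2,
\[
 \la \nabla^s Q^{-s,n}_{j,\ell}, \nabla^s Q^{-s,n'}_{j',\ell'}\ra_{\ball}
  = \la \Delta^{\rhoh}Q^{-s,n}_{j,\ell}, \Delta^{\rhoh}Q^{-s,n'}_{j',\ell'}\ra_{\ball}
  = 4^s (n+\tfrac d2 - s)_s (n'+\tfrac d2-s)_s\, \la P^{0,n-s}_{j-s/2,\ell}, P^{0,n'-s}_{j'-s/2,\ell'}\ra_{\ball},
\]
and the right side is $4^s (n+\tfrac d2-s)_s^2\, h^0_{j-s/2,\,n-s}\,\delta_{n,n'}\delta_{j,j'}\delta_{\ell,\ell'}$ by the Proposition, with $h^0$ given by \eqref{eq:Hjn-mu}; plugging in $\mu = 0$ and simplifying the Pochhammer products should produce exactly $2^{2s-1} d\,(n+\tfrac d2-s)_s(n+\tfrac d2-s+1)_{s-1}$. (The extra boundary terms generated when integrating by parts through the middle step must be shown to vanish: after $\rhoh$ applications of Green's identity the surface integrand involves $\Delta^k Q$ and $\partial_r \Delta^k Q$ for $k < \rhow$; on $\sph$ the former is $\delta_{j,k}Y^{n-2j}_\ell$, and one checks the pairing of the cross terms against $\Delta^{k'}Q'$ with $k+k' = s-1$ also matches the Kronecker pattern $\delta_{j,j'}$, so these contributions are either zero or already accounted for — this is where care is needed.) For $s$ odd the same scheme runs with $\rhoh = (s-1)/2$, one extra half-integration producing a genuine $\la \nabla\,\cdot,\nabla\,\cdot\ra_{\ball}$ term plus the first boundary term of \eqref{eq:ipd-s}, and item~3 feeds it into Theorem~\ref{thm:OPnabla}; the special value $j = \rhoh$ (where $\Delta^{\rhoh}Q = P^{-1,n-s+1}_{0,\ell} = Y^{n-2\rhoh}_\ell$) is exactly the $j = \frac{s-1}{2}$ case of the formula for $h^{-s}_{j,n}$, and gives the $d(n-2j)+\lambda_j$ value by combining the $\la\nabla\cdot,\nabla\cdot\ra$ contribution $d(n-2j)$ (as in the $j=0$ computation inside the proof of Theorem~\ref{thm:OPnabla}) with the surviving boundary term $\lambda_j$.

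Finally, the three remaining cross-cases — $j \ge \rhow \le j'$ automatically handled above; $j \le \rhow-1$ paired with $j' \ge \rhow$; and both $j,j' \le \rhow-1$ with $j \ne j'$ — all reduce to the two computations just described: in the low-$j$ regime $Q^{-s,n}_{j,\ell} = Y^{n-2j,j}_\ell$ has $\nabla^s Q = 0$ when $2j < s$ is... actually $\Delta^{\rhoh}$ of it may be nonzero only if $j \ge \rhoh$, so one uses items~1--3 to see that the volume pairing either vanishes or again becomes a $P^{\mu}$ pairing with mismatched indices, while the boundary sum gives $\lambda_j \delta_{j,j'}$; combining these yields $h^{-s}_{j,n} = \lambda_j$ for $0 \le j < \frac{s-1}{2}$. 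That the full collection has the right cardinality $\sum_{0\le j\le n/2} a^d_{n-2j} = \dim\CV^d_n(\varpi_{-s})$ and consists of degree-$n$ polynomials follows from the definitions, so mutual orthogonality plus a dimension count gives that it is a basis. The main obstacle is bookkeeping the boundary terms produced by the repeated integration by parts in the volume term and checking they respect the Kronecker-delta pattern (so they neither spoil orthogonality nor get double-counted), together with the purely computational but delicate simplification of the Pochhammer product into the stated closed form for $h^{-s}_{j,n}$.
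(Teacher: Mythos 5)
Your proposal is correct and follows essentially the same route as the paper: collapse the boundary sum via item~1 of Lemma~\ref{lm:defQ} and the orthonormality of the $Y^{n}_\ell$, then reduce the volume term via items~2--3 to the known orthogonality of $P^{0,n-s}_{j-s/2,\ell}$ in $L^2(\ball)$ (even $s$) and of $P^{-1,n-s+1}_{j-\frac{s-1}{2},\ell}$ under $\la\nabla\cdot,\nabla\cdot\ra_{\ball}$ from Theorem~\ref{thm:OPnabla} with $\lambda_0=0$ (odd $s$), with the same treatment of the low-$j$ and cross cases. The one thing to note is that the ``main obstacle'' you flag --- bookkeeping boundary terms from repeatedly applying Green's identity to pass from $\la\nabla^s Q,\nabla^s Q'\ra_{\ball}$ to $\la\Delta^{\rhoh}Q,\Delta^{\rhoh}Q'\ra_{\ball}$ --- does not arise: the paper defines $\nabla^{2m}:=\Delta^m$ and $\nabla^{2m+1}:=\nabla\Delta^m$, so for even $s$ the two pairings are identical by definition, and for odd $s$ the volume term is already $\la\nabla\Delta^{\rhoh}Q,\nabla\Delta^{\rhoh}Q'\ra_{\ball}$, which item~3 feeds directly into Theorem~\ref{thm:OPnabla}; no integration by parts is needed anywhere in this proof.
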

\begin{proof} 
From item 1 of Lemma \ref{lm:defQ} and the orthonormality of $\{ Y^n_{\ell}: 1\le \ell \le a_n^d \}$,
it follows immediately that
\begin{align*}
  \sum_{k=0}^{\rhow-1} \lambda_{k}  \big \langle \Delta^k Q^{-s,n}_{j,\ell},  \Delta^k Q^{-s,n'}_{j',\ell'} \big \rangle_{\sph}
   =  \lambda_j   \delta_{n,n'} \delta_{j,j'}\delta_{\ell,\ell'},
   \quad   0\le j, j' \le  \left \lceil \tfrac s 2 \right \rceil-1
\end{align*}
and the left hand side is equal to zero  if
 $j \ge \left \lceil \tfrac s 2 \right \rceil$ or $j' \ge \left \lceil \tfrac s 2 \right \rceil$. Thus, we remain to consider
$J:=\la \nabla^s Q_{j,\ell}^{-s,n}, \nabla^s Q_{j',\ell'}^{-s,n'}\ra_{\ball}$. 

 If $s$ is odd,  we temporarily denote 
  $c_{j,n}=2^{s-1} (n+\tfrac d 2 - s+1)_{s-1}$ if $j \neq \frac{s+1}{2}$ 
and $c_{\frac{s-1}{2},n}=1$.  From  item 3 of Lemma \ref{lm:defQ},
we obtain   that  
$$
J
  = c_{j,n} c_{j',n'}  \la \nabla P_{j- \f{s-1}{2}, \ell}^{-1,n - s+1}, \nabla P_{j'- \f{s-1}{2}, \ell'}^{-1,n' - s+1}  
       \big \rangle_{\ball},
$$
which is equal to zero  if $j\le \frac{s-3}{2}$ or $j'\le \frac{s-3}{2}$, 
whereas
  it can be seen from \eqref{orth:P-1} with $\l_0 =0$ that 
\begin{align*}J  %= \big \langle \nabla^{s}Q^{-s,n}_{\f{s-1}{2},\ell}, \nabla^s Q^{-s,n}_{\f{s-1}{2},\ell} \big \rangle_{\ball} 
     %= c_{j,n} c_{j',n'} h^{-1}_{j- \f{s-1}{2},n-s+1}\big|_{\l_0=0}
     = \big[d (n-s+1) \delta_{\frac{s-1}{2},j} + 2d  (n+\tfrac{d}{2}-s)(c_{j,n})^2 (1-\delta_{\frac{s-1}
     {2},j} ) \big]  \delta_{n,n'} \delta_{j,j'}\delta_{\ell,\ell'}
\end{align*}
for $j,j' \ge\frac{s-1}{2}$.  As a result, this completes the proof of \eqref{eq:Hjn-rho} for odd $s$.

If $s$ is even, we obtain from  item 2 of Lemma \ref{lm:defQ}  that  
$$
J  = 4^s(n+\tfrac{d}2-s)_s (n'+\tfrac{d}2-s)_s  \la P_{j- \f{s}{2}, \ell'}^{0,n - s},  P_{j'- \f{s}{2}, \ell'}^{0,n' - s}  
       \big \rangle_{\ball}.
$$
It is obvious that $J=0$ if $j\le \frac{s}{2}-1$ or $j'\le \frac{s}{2}-1$. For  $j,j'\ge \tfrac{s}{2}$, it follows
from \eqref{eq:Hjn-mu} that 
\begin{align*}
J   =  \frac{  \frac{d}{2} [2^s (n+\tfrac{d}2-s)_s]^2  }{ n+\frac{d}{2}-s  } 
   \delta_{n,n'} \delta_{j,j'}\delta_{\ell,\ell'}, 
\end{align*}
which proves \eqref{eq:Hjn-rho} for even $s$. The proof is completed.
\end{proof}
 
Just as the projection operator defined for $\mu > -1$, we define the orthogonal projection operator 
$\proj_n^{-s}: W^{s}_2(\ball) \mapsto \CV_n^d(\varpi_{-s})$, where $s$ is a positive integer, by
\begin{align} \label{eq:proj-s}
\proj_n^{-s} f(x) := \sum_{0\le j \le \tfrac{n}{2}} \frac{1}{h^{-s}_{j,n}} \sum_{\ell =1}^{a_{n-j}^d} 
   \big \langle f, Q^{-s,n}_{j,\ell} \big \rangle_{-s}   Q^{-s,n}_{j,\ell}(x),
\end{align}

\begin{lem}
Let $f \in W_p^s(\ball)$ and $s \in \NN$. For $k =0,1,\ldots \rhow -1$, 
\begin{equation} \label{DeltBndproj}
   \Delta^k \proj_n^{-s} f( \xi) = \proj_{n-2k}^\CH  \Delta^k f (\xi), \quad \xi \in \sph, 
\end{equation}
\end{lem}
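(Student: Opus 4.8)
The plan is to compute the left‑hand side directly from the series \eqref{eq:proj-s} defining $\proj_n^{-s}$, using the boundary--interpolation property of the basis $\{Q^{-s,n}_{j,\ell}\}$ recorded in item~1 of Lemma~\ref{lm:defQ}. Applying $\Delta^{k}$ term by term to \eqref{eq:proj-s} and restricting to $\sph$, that property gives $\Delta^{k}Q^{-s,n}_{j,\ell}(\xi)=\delta_{j,k}\,Y^{n-2j}_{\ell}(\xi)$ for all $0\le k\le\rhow-1$, so every summand with $j\ne k$ drops and
\[
  \Delta^{k}\proj_n^{-s}f(\xi)=\frac{1}{h^{-s}_{k,n}}\sum_{\ell=1}^{a_{n-2k}^d}\la f,Q^{-s,n}_{k,\ell}\ra_{-s}\,Y^{n-2k}_{\ell}(\xi).
\]
Comparing this with the definition \eqref{eq:SnH} of $\proj^{\CH}_{n-2k}$, the lemma reduces to the single identity
\[
  \la f,Q^{-s,n}_{k,\ell}\ra_{-s}=h^{-s}_{k,n}\,\la\Delta^{k}f,\,Y^{n-2k}_{\ell}\ra_{\sph},\qquad 0\le k\le\rhow-1,
\]
with $h^{-s}_{k,n}$ the constant from Theorem~\ref{thm:Hjn-rho}.

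To establish this identity I split the inner product \eqref{eq:ipd-s} into the interior term $\la\nabla^{s}f,\nabla^{s}Q^{-s,n}_{k,\ell}\ra_{\ball}$ and the boundary sum $\sum_{m=0}^{\rhow-1}\lambda_{m}\la\Delta^{m}f,\Delta^{m}Q^{-s,n}_{k,\ell}\ra_{\sph}$. By item~1 of Lemma~\ref{lm:defQ} again, $\Delta^{m}Q^{-s,n}_{k,\ell}$ vanishes on $\sph$ unless $m=k$, so the boundary sum equals $\lambda_{k}\la\Delta^{k}f,Y^{n-2k}_{\ell}\ra_{\sph}$. For the interior term I invoke items~2 and~3 of Lemma~\ref{lm:defQ}. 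If $s$ is even then $\nabla^{s}=\Delta^{\rhoh}$ and $\Delta^{\rhoh}Q^{-s,n}_{k,\ell}$ is a constant multiple of $P^{0,n-s}_{k-s/2,\ell}$, which is $0$ because $k-s/2<0$; if $s$ is odd then $\nabla^{s}=\nabla\Delta^{\rhoh}$ and $\Delta^{\rhoh}Q^{-s,n}_{k,\ell}$ is a constant multiple of $P^{-1,n-s+1}_{k-(s-1)/2,\ell}$, which vanishes for $k<\tfrac{s-1}{2}$ and equals the solid harmonic $Y^{n-2k}_{\ell}$ for $k=\tfrac{s-1}{2}$. Hence the interior term is $0$ in every case except $s$ odd with $k=\tfrac{s-1}{2}$, in which case $\nabla^{s}Q^{-s,n}_{k,\ell}=\nabla Y^{n-2k}_{\ell}$ and, by Green's identity together with the fact that $Y^{n-2k}_{\ell}$ is harmonic with normal derivative $(n-2k)Y^{n-2k}_{\ell}$ on $\sph$ and the normalization $\la\nabla u,\nabla v\ra_{\ball}=d\,\la u,\partial_{\rho}v\ra_{\sph}$ for harmonic $v$ (the constant $d=|\sph|/|\ball|$ already used in the proof of Theorem~\ref{thm:OPnabla}),
\[
  \la\nabla^{s}f,\nabla^{s}Q^{-s,n}_{k,\ell}\ra_{\ball}=\la\nabla\Delta^{k}f,\nabla Y^{n-2k}_{\ell}\ra_{\ball}=d(n-2k)\,\la\Delta^{k}f,Y^{n-2k}_{\ell}\ra_{\sph}.
\]
Adding the boundary contribution and matching against the branches $h^{-s}_{k,n}=\lambda_{k}$ (for $k<\tfrac{s-1}{2}$) and $h^{-s}_{k,n}=d(n-2k)+\lambda_{k}$ (for $k=\tfrac{s-1}{2}$) of Theorem~\ref{thm:Hjn-rho} gives the identity, hence the lemma; the degenerate situations $n<2k$ are covered by the convention that $\proj^{\CH}_{m}$ and $\CV^d_m$ are trivial for $m<0$.

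I expect the only delicate point to be the bookkeeping in the interior term: one has to confirm that the vanishing or non‑vanishing of $\nabla^{s}Q^{-s,n}_{k,\ell}$ lines up exactly with the three branches of $h^{-s}_{k,n}$ (even $s$; odd $s$ with $k<\tfrac{s-1}{2}$; odd $s$ with $k=\tfrac{s-1}{2}$), and, in the borderline odd case, to apply Green's identity with the correct constant $d$. Everything else is a term‑by‑term substitution from Lemma~\ref{lm:defQ} and the definitions \eqref{eq:proj-s} and \eqref{eq:ipd-s}.
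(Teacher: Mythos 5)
Your proposal is correct and follows essentially the same route as the paper: apply $\Delta^k$ termwise to \eqref{eq:proj-s}, use item~1 of Lemma~\ref{lm:defQ} to reduce to the single identity $\la f,Q^{-s,n}_{k,\ell}\ra_{-s}=h^{-s}_{k,n}\la\Delta^k f,Y^{n-2k}_\ell\ra_{\sph}$, observe via items~2--3 that the interior term vanishes except when $s$ is odd and $k=\tfrac{s-1}{2}$, and in that borderline case use Green's identity with $\partial_\sn Y^{n-2k}_\ell=(n-2k)Y^{n-2k}_\ell$ and the normalization constant $d$. The only difference is that you make explicit the vanishing of $\nabla^sQ^{-s,n}_{k,\ell}$ for $k<\tfrac{s-1}{2}$, which the paper leaves implicit.
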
  

\begin{proof} Form Lemma \ref{lm:defQ}, Theorem \ref{thm:Hjn-rho} and the orthonormality of 
$\{ Y^{n}_{\ell}\}$, it follows that
\begin{align*}
   \Delta^k  \proj_n^{-s} f(\xi) = & \sum_{0\le j\le n/2 }\frac{1}{h^{-s}_{j,n}} \sum_{\ell=1}^{a^d_{n-2j}}  \la f, Q^{-s,n}_{j,\ell} \ra_{-s}   \delta_{j,k} Y^{n-2j}_{\ell}(x)   %\Delta^k  Q^{-s,n}_{j,\ell} (\xi)
   \\
   =& \sum_{\ell=1}^{a^d_{n-2k}}  \la \Delta^k f, Y^{n-2k}_{\ell} \ra_{\sph}     Y^{n-2k}_{\ell}  (\xi)
   = \proj_{n-2k}^{\CH} \Delta^kf(\xi),
\end{align*}
for $0\le k< \frac{s-1}{2}$. In the remaining case of $s$ is odd and $k = \frac{s-1}{2}$, it follows from
Lemma \ref{lm:defQ}, Theorem \ref{thm:Hjn-rho} and  \eqref{orth:P-1} that 
\begin{align*}
   \Delta^k & \proj_n^{-s} f(\xi) =  \sum_{0\le j\le \tfrac{n}{2} }\frac{1}{h^{-s}_{j,n}} \sum_{\ell=1}^{a^d_{n-2j}}  \la f, Q^{-s,n}_{j,\ell} \ra_{-s}     \delta_{j,k} Y^{n-2j}_{\ell}(x)  % \Delta^k  Q^{-s,n}_{j,\ell} (\xi)
   \\
   =&\frac{1 }{d(n-2k)+\lambda_k } \sum_{\ell=1}^{a^d_{n-2k}}
    \big[   \la\nabla \Delta^k f, \nabla Y^{n-2k}_{\ell} \ra_{\ball}  +    \lambda_k  \la \Delta^k f, Y^{n-2k}_{\ell} \ra_{\sph}  \big]   Y^{n-2k}_{\ell}  (\xi)
\\
   =&\frac{1 }{d(n-2k)+\lambda_k } \sum_{\ell=1}^{a^d_{n-2k}}
    \big[  d \la  \Delta^k f, \partial_{\sn} Y^{n-2k}_{\ell} \ra_{\sph}  +    \lambda_k  \la \Delta^k f, Y^{n-2k}_{\ell} \ra_{\sph}  \big]   Y^{n-2k}_{\ell}  (\xi)
\\    
   =&\sum_{\ell=1}^{a^d_{n-2k}}
    \la \Delta^k f, Y^{n-2k}_{\ell} \ra_{\sph}    Y^{n-2k}_{\ell}  (\xi)
   =  \proj_{n-2k}^{\CH} \Delta^kf(\xi),
\end{align*}
where the third equality sign is derived using Green's formula. 
\end{proof}

% {\color{blue}  please check the proof or replace it with the proof above.}
 \iffalse
\begin{proof}
This follows essentially from the item 1 of Lemma \ref{lm:defQ}, which implies immediately that 
$\la f, Q_{j,\ell}^{-s,n} \ra_{-s} = \la f, Y_\ell^{n-2j} \ra_{\sph}$ for all $s$ and $0 \le k \le \rhoh -$
so that \eqref{DeltBndproj} holds. In the 
remaining case that $s$ is odd and $k = \frac{s-1}{2}$, applying Green's identity shows that 
$\la f, Q_{j,\ell}^{-s,n} \ra_{-s} = (n-2j) \la f, Y_\ell^{n-2j}\ra_{\sph}$, using the fact that 
$(\frac{d} {d \sn} Y_\ell^{n-2j} )(\xi) = (n-2j) Y_\ell^{n-2j} (\xi)$. The same argument shows that 
$h_{j,n}^{-s} = n-2j$, which proves that \eqref{DeltBndproj} holds in this case as well. 
\end{proof}
\fi

\begin{lem} \label{lem:proj-f=0g}
Assume that $f \in W_p^s(\ball)$ satisfies $f(x) = (1-\|x\|^2)^s g(x)$. Then for $s = 1,2 \ldots$, 
\begin{equation} \label{eq:proj-f=0g}
    \proj_n^{-s} f(x) = (1-\|x\|^2)^s \proj_{n-2s}^s g(x), \qquad x \in \ball. 
\end{equation}
\end{lem}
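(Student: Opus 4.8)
The plan is to exploit the explicit mutually orthogonal basis $\{Q^{-s,n}_{j,\ell}\}$ of $\CV_n^d(\varpi_{-s})$ from Theorem~\ref{thm:Hjn-rho} together with the structural identities of Lemma~\ref{lm:defQ}, reducing the claim to the corresponding statement for ordinary Fourier expansions with weight $\varpi_s$, $s>-1$. First I would observe that since $f(x)=(1-\|x\|^2)^s g(x)=\varphi(x)^{2s} g(x)$ vanishes to order $s$ at the boundary along with all its relevant normal derivatives, the boundary terms in $\la f, Q^{-s,n}_{j,\ell}\ra_{-s}$ drop out: indeed $\Delta^k f(\xi)=0$ on $\sph$ for $0\le k\le \rhow-1$ by Lemma~\ref{lem:Ynj} (the factor $(1-\|x\|^2)^s$ forces $(-s)_k$-type vanishing since $k<s$), so all terms $\la\Delta^k f,\Delta^k Q^{-s,n}_{j,\ell}\ra_{\sph}$ in \eqref{eq:ipd-s} are zero. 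Hence $\la f, Q^{-s,n}_{j,\ell}\ra_{-s}=\la\nabla^s f,\nabla^s Q^{-s,n}_{j,\ell}\ra_{\ball}$.

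Next I would compute $\nabla^s f$. Writing $\nabla^s=\Delta^{s/2}$ or $\nabla\Delta^{(s-1)/2}$ according to parity, the key point is that $\Delta^{\rhoh}$ applied to $f=\varphi^{2s}g$ and to $Q^{-s,n}_{j,\ell}$ can both be expressed via ordinary orthogonal polynomials for $\varpi_s$ or $\varpi_0$. For $Q^{-s,n}_{j,\ell}$ this is precisely items 2 and 3 of Lemma~\ref{lm:defQ}: $\Delta^{\rhoh}Q^{-s,n}_{j,\ell}$ is (a constant multiple of) $P^{0,n-s}_{j-s/2,\ell}$ (even $s$) or $P^{-1,n-s+1}_{j-(s-1)/2,\ell}$ (odd $s$). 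For $f$, one uses Lemma~\ref{DeltaP}/\eqref{PN2P} termwise: $\Delta^{\rhoh}[\varphi^{2s}P^{s,m}_{i,\ell}]$ is a multiple of $P^{s-2\rhoh,m+2s-2\rhoh}_{i+s-\rhoh,\ell}$, i.e.\ again an orthogonal polynomial for $\varpi_s$ (even $s$) or, after an extra gradient, for $\varpi_1$ and then $\varpi_0$ (odd $s$). Thus the inner product $\la\nabla^s f,\nabla^s Q^{-s,n}_{j,\ell}\ra_{\ball}$ becomes, up to explicit constants, an inner product $\la\cdot,\cdot\ra_{\ball}$ between orthogonal polynomials for the same classical weight, which is exactly the kind of quantity controlled by Lemma~\ref{lem:diff-Sn-mu} (for the odd-$s$ case, where one integrates by parts once more using Green's identity as in the proof of Theorem~\ref{thm:OPnabla}).

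Assembling these, the claim follows by matching Fourier coefficients: for $j\ge \rhow$ one shows $\la f, Q^{-s,n}_{j,\ell}\ra_{-s}/h^{-s}_{j,n}$ equals (up to the built-in normalizing constants in \eqref{baseP}) the coefficient $\wh g^{\,n-2s}_{j-s,\ell}$ of $g$ in its $\varpi_s$-expansion; for $j<\rhow$ the coefficient of $f$ vanishes because $f\perp$ the low-order boundary pieces, matching the fact that $(1-\|x\|^2)^s\proj^s_{n-2s}g$ has no component in $Y^{n-2j,j}_\ell$-type directions for small $j$. Summing over $j,\ell$ and using $(1-\|x\|^2)^s P^{s,n-2s}_{j-s,\ell}=c\,P^{-s,n}_{j,\ell}=c\,Q^{-s,n}_{j,\ell}$ (for $j\ge s$, by \eqref{PN2P}) recovers $\proj_n^{-s}f=(1-\|x\|^2)^s\proj^s_{n-2s}g$. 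I expect the main obstacle to be bookkeeping the constants in the odd-$s$ case: there one must track the single surviving gradient inner product $\la\nabla(\cdot),\nabla(\cdot)\ra_{\ball}$, the Green's-identity boundary term, and the mismatch at $j=\rhoh$ (where $Q^{-s,n}_{\rhoh,\ell}$ degenerates to $Y^{n-2\rhoh}_\ell=P^{-1,n-s+1}_{0,\ell}$), and verify these conspire so that the ratio to $h^{-s}_{j,n}$ still collapses to the clean coefficient of $g$. The even-$s$ case should be a direct computation using \eqref{eq:Hjn-mu} and the constants in \eqref{PN2P}.
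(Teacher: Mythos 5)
Your outline is sound and rests on the same two pillars as the paper's argument: (i) the factor $(1-\|x\|^2)^s$ kills all the spherical terms in $\la f, Q^{-s,n}_{j,\ell}\ra_{-s}$, reducing it to $\la \nabla^s f,\nabla^s Q^{-s,n}_{j,\ell}\ra_{\ball}$, and (ii) the structural identities \eqref{PN2P}, \eqref{LaplaceP} and Lemma \ref{lm:defQ} convert this to a classical inner product that matches the coefficient of $g$ in its $\varpi_s$-expansion. Where you diverge is the middle step: you distribute the derivatives symmetrically, putting $\Delta^{\rhoh}$ on each factor and then splitting into even and odd $s$ (essentially replaying the mechanism of Theorem \ref{main-lemma3}), which is what forces the odd-$s$ constant-chasing through $P^{-1,\cdot}_{\cdot}$, the extra Green's identity, and the degenerate $j=\rhoh$ case that you flag as the main obstacle. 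The paper instead integrates by parts $s$ times in one stroke — legitimate because $f$ and enough of its normal derivatives vanish on $\sph$ — obtaining
\begin{align*}
\la f, Q^{-s,n}_{j,\ell}\ra_{-s} = (-1)^s \la f, \Delta^s P^{-s,n}_{j,\ell}\ra_{\ball}
  = (-4)^s (n+\tfrac d2 -2s)_{2s}\, \la f, P^{s,n-2s}_{j-s,\ell}\ra_{\ball}
  = c\, \la g, P^{s,n-2s}_{j-s,\ell}\ra_{s},
\end{align*}
where the last step simply absorbs $(1-\|x\|^2)^s$ into the weight; here it is used that $\Delta^s Q^{-s,n}_{j,\ell}=\Delta^s P^{-s,n}_{j,\ell}$ and that \eqref{LaplaceP} with $k=s$ has no correction term. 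This route needs no parity split at all, and the vanishing for $j<s$ and the matching of $h^{-s}_{j,n}$ against $h^s_{j,n}$ then follow exactly as you describe. Two small slips in your sketch: $\Delta^{\rhoh}$ applied to $\varphi^{2s}P^{s,m}_{i,\ell}$ produces (a multiple of) $P^{2\rhoh-s,\,m+2s-2\rhoh}_{i+s-\rhoh,\ell}$, not $P^{s-2\rhoh,\cdot}_{\cdot}$ — for even $s$ this is a $\varpi_0$-orthogonal polynomial, not a $\varpi_s$-orthogonal one; and Lemma \ref{lem:diff-Sn-mu} is not really the relevant tool for the resulting inner products (the normalizations \eqref{eq:Hjn-mu} and \eqref{orth:P-1} are). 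Neither affects the viability of your plan.
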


\begin{proof}
The fact of $(1-\|x\|^2)^s$ in $f$ implies immediately from \eqref{eq:Delta-pola} that $\Delta^k f(\xi) = 0$ if $k = 0,1,\ldots, \rhow -1$,
so that $\la f, Q^{-s,n}_{j,\ell}\ra_{-s} = \la \nabla^s f, \nabla^s Q^{-s,n}_{j,\ell}\ra_{\ball}$. 
By \eqref{eq:defQ} and \eqref{eq:Ynj}, $\Delta^sQ^{-s,n}_{j,\ell} =\Delta^s P^{-s,n}_{j,\ell}$ for $0\le j \le \frac{n}{2}$. 
Applying 
Green's identity repeatedly and using \eqref{LaplaceP}, we then obtain 
\begin{align*}
\la f, Q^{-s,n}_{j,\ell}\ra_{-s}  &=(-1)^s \la f, \Delta^s Q^{-s,n}_{j,\ell}\ra_{\ball} =  (-1)^s\la f, \Delta^s P^{-s,n}_{j,\ell}\ra_{\ball} \\
  =&(-4)^s (n + \tfrac d 2 - 2s)_{2s} \la f,  P^{s,n-2s}_{j-s,\ell}\ra_{\ball} 
  =    \frac{ (-4)^s  s! (n + \tfrac d 2 - 2s)_{2s}}{(\frac{d}{2}+1)_s  } \la g,  P^{s,n-2s}_{j-s,\ell}\ra_{s},
\end{align*}
which is zero if $j < s$; while for $j\ge s$, it follows from
\eqref{eq:defQ},  \eqref{eq:Hjn-rho},  \eqref{PN2P} and \eqref{eq:Hjn-mu} that 
\begin{align*}
\frac{\la f, Q^{-s,n}_{j,\ell}\ra_{-s}   Q^{-s,n}_{j,\ell} (x)  }{h^{-s}_{j,n}}
%= \frac{ (-4)^s s!  (n + \tfrac d 2 - 2s)_{2s}  (n + \tfrac d 2 - s)  (1-n-\tfrac{d}{2})_{j}  (\|x\|^2-1)^s P^{s,n-2s}_{j-s,\ell}(x)  }{2^{2s-1}d [(n + \tfrac d 2 - s)_s]^2  (\frac{d}{2}+1)_s  (-j)_s  (1-n-\tfrac{d}{2}+2s)_{j-s}    }
= \frac{ \la g, P^{s,n-2s}_{j-s,\ell} \ra_{s}  (1-\|x\|^2)^s P^{s,n-2s}_{j-s,\ell}(x) }{h^{s}_{j,n}},
\end{align*}
which finally proves \eqref{eq:proj-f=0g}.
\end{proof}

%%%%%%%%%%%%%%%%%%%%%
%       Approximation Theorems              %
%%%%%%%%%%%%%%%%%%%%%
\section{Approximation by polynomials on the ball}
\setcounter{equation}{0}

This section contains our main results on approximation in the Sobolev space on the ball and their proofs. 
To facilitate readers who are mainly interested 
in the results, we state our main theorems in the first subsection and give their proofs in subsequent 
subsections. 

\subsection{Main results}\label{sec:main-result}

Let $\eta \in C^\infty [0, \infty)$ be an admissible cut-off function. With respect to the inner product 
$\la \cdot,\cdot\ra_{-s}$ of $W_p^s(
\ball)$, we define
\begin{equation} \label{eq:Sn-1}
      S_n^{-s} f(x):= \sum_{k=0}^n  \proj_{k}^{-s} f(x) \quad \hbox{and} \quad 
        S_{n, \eta}^{-s} f(x):= \sum_{k=0}^{\infty} \eta \left( \f{k}{n} \right)  \proj_{k}^{-s} f(x).   
\end{equation}
By definition, $S_n^{-s} f \in \Pi_n^d$ and $S_{n,\eta}^{-s} f \in \Pi_{2n-1}^d$, and it is obvious that 
\begin{equation} \label{eq:Sn-1*}
      \la f- S_n^{-s}f , v\ra_{-s}= 0  \quad \hbox{and} \quad   \la f- S_{n,\eta}^{-s}f , v\ra_{-s}=0, 
      \quad \forall v\in \Pi_n^d.
\end{equation}

Our first result is on approximation in the space of $W_p^1(\BB^d)$. 

\begin{thm}\label{1stApprox}
Let $r \in \NN$. If $f \in W_p^{r}(\ball)$ and  $1<p <\infty$, then 
\begin{align} \label{1stApprox0}
&\|f - S_{n, \eta}^{-1}f\|_{p,\ball} \le  c  n^{- 1}  \sum_{i=1}^d E_{n-1} ( \partial_i  f)_{p,\ball} 
  \le cn^{- r} \| f\|_{W_p^{r}(\ball)},
\\  \label{1stApprox1}
&\| \partial_i f -  \partial_i S_{n, \eta}^{-1}f\|_{p,\ball} \le   c   E_{n-1} ( \partial_i  f)_{p,\ball}
 \le cn^{-r+1} \|\partial_i f\|_{W_p^{r-1}(\ball)}, \quad 1\le i\le d,
\end{align}
where $S_{n, \eta}^{-1}f$  can be replaced by $S_{n}^{-1}f$ if $p=2$.
\end{thm}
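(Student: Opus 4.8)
The plan is to reduce everything to the known $L^p$-approximation theory on the ball (Corollary~\ref{cor:Jackson} and Theorem~\ref{thm:near-best}) by exploiting the special structure of $S_{n,\eta}^{-1}$ coming from the decomposition $\CV_n^d(\varpi_{-1}) = (1-\|x\|^2)\CV_{n-2}^d(\varpi_1) \oplus \CH_n^d$ in Theorem~\ref{thm:OPnabla}. The first key step is to establish a commutation identity analogous to Lemma~\ref{lem:diff-Sn-mu}, namely that
\begin{equation*}
   \partial_i S_{n,\eta}^{-1} f = S_{n-1,\eta}^{1}(\partial_i f), \qquad 1 \le i \le d.
\end{equation*}
This should follow from the fact that $\partial_i$ maps $(1-\|x\|^2)\CV_{n-2}^d(\varpi_1)\oplus\CH_n^d$ into $\CV_{n-1}^d(\varpi_1)$ — using $\partial_i\CH_n^d \subset \CH_{n-1}^d \subset \CV_{n-1}^d(\varpi_1)$ together with Lemma~\ref{lm:DiffV} applied to the $\varpi_1$ piece — combined with the orthogonality relation \eqref{eq:Sn-1*} tested against $\Pi_{n-2}^d$ in the $\la\cdot,\cdot\ra_1$ inner product, exactly mimicking the argument in Lemma~\ref{lem:diff-Sn-mu}. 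Once this identity is in hand, \eqref{1stApprox1} is immediate: apply Theorem~\ref{thm:near-best}(3) for the space $L^p(\varpi_1,\ball)$ to get $\|\partial_i f - S_{n-1,\eta}^1(\partial_i f)\|_{p,1} \le cE_{n-1}(\partial_i f)_{p,1}$, but here one must be careful — the target norm in \eqref{1stApprox1} is the \emph{unweighted} $L^p(\ball)$ norm, not $L^p(\varpi_1,\ball)$, so this route gives a weighted estimate and needs an extra argument.

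To handle the unweighted norm I would instead argue directly. Write $g = \partial_i f$ and let $\pi \in \Pi_{n-1}^d$ be a near-best unweighted approximation to $g$, so $\|g - \pi\|_{p,\ball} \le (1+c)E_{n-1}(g)_{p,\ball}$. Then
\begin{equation*}
  \|g - \partial_i S_{n,\eta}^{-1}f\|_{p,\ball} \le \|g-\pi\|_{p,\ball} + \|\pi - S_{n-1,\eta}^1 g\|_{p,\ball},
\end{equation*}
and since $S_{n-1,\eta}^1$ reproduces polynomials of degree $\le n-1$, the second term is $\|S_{n-1,\eta}^1(\pi - g)\|_{p,\ball}$. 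The boundedness statement in Theorem~\ref{thm:near-best}(2) is stated for the weighted norm $\|\cdot\|_{p,1}$, so the crucial technical point — and the main obstacle — is to upgrade it to \emph{unweighted} $L^p$-boundedness of $S_{n,\eta}^\mu$, i.e. $\|S_{n,\eta}^1 h\|_{p,\ball} \le c\|h\|_{p,\ball}$. This should be available from the theory of the operators $S_{n,\eta}^\mu$ (they are given by convolution-type kernels that satisfy the requisite Cesàro-type kernel bounds uniformly, a fact used pervasively in \cite{DaiX}); alternatively one can cite the corresponding result from \cite{DX11} where mixed-weight mapping properties are established. Granting this, the second term is $\le cE_{n-1}(g)_{p,\ball}$, and combining with Corollary~\ref{cor:Jackson} applied to $g = \partial_i f \in W_p^{r-1}(\ball)$ gives the full chain in \eqref{1stApprox1}.

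For \eqref{1stApprox0} the strategy is a Poincaré-type / duality argument exploiting that $f - S_{n,\eta}^{-1}f$ has controlled derivatives. Since $\partial_i(f - S_{n,\eta}^{-1}f) = \partial_i f - S_{n-1,\eta}^1(\partial_i f)$ is already estimated by \eqref{1stApprox1}, and since $S_{n,\eta}^{-1}$ is built from the $\la\cdot,\cdot\ra_{-1}$ inner product whose boundary term is $\lambda_0\la\cdot,\cdot\ra_{\sph}$, I would control $\|f - S_{n,\eta}^{-1}f\|_{p,\ball}$ by its gradient plus a boundary contribution. Concretely: the function $u := f - S_{n,\eta}^{-1}f$ satisfies $\la u, v\ra_{-1} = 0$ for $v \in \Pi_n^d$; choosing $v$ appropriately and using the trace/Poincaré inequality $\|u\|_{p,\ball} \le c(\|\nabla u\|_{p,\ball} + \|u\|_{p,\sph})$ together with a bound on $\|u\|_{p,\sph}$ (which can be extracted from the fact that the $\proj_k^{-1}$ components that survive contribute harmonic boundary data governed by $\proj_k^\CH$ via \eqref{DeltBndproj}), one obtains the factor $n^{-1}$. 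The cleanest packaging is probably: first prove \eqref{1stApprox1}; then prove a standalone "gain of one derivative" lemma stating $\|u\|_{p,\ball} \le cn^{-1}\sum_i\|\partial_i u\|_{p,\ball}$ for $u$ of the form $f - S_{n,\eta}^{-1}f$, which holds because such $u$ is orthogonal to constants (and low-degree polynomials) in $\la\cdot,\cdot\ra_{-1}$; this is where I expect to need a careful argument, possibly invoking the $K$-functional characterization or an inverse-type inequality. Finally, $E_{n-1}(\partial_i f)_{p,\ball} \le cn^{-(r-1)}\|f\|_{W_p^r(\ball)}$ from Corollary~\ref{cor:Jackson} closes \eqref{1stApprox0}. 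The $p=2$ statement is automatic since $S_n^{-1}f$ is the orthogonal projection, so $S_n^{-1}$ replaces $S_{n,\eta}^{-1}$ verbatim throughout, with the commutation identity $\partial_i S_n^{-1}f = S_{n-1}^1(\partial_i f)$ in place of its $\eta$-version.
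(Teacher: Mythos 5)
Your proof of \eqref{1stApprox1} hinges on the commutation identity $\partial_i S_{n,\eta}^{-1} f = S_{n-1,\eta}^{1}(\partial_i f)$, and that identity is wrong: the correct one (Theorem \ref{main-lemma} of the paper) is $\partial_i S_{n,\eta}^{-1} f = S_{n-1,\eta}^{0}(\partial_i f)$, with the \emph{unweighted} projection on the right. To see why, take $P \in \CV_{m-2}^d(\varpi_1)$ and $v \in \Pi_{n-1}^d$ with $m \ge n+1$; integrating by parts,
$$
\la \partial_i[(1-\|x\|^2)P], v\ra_{\ball} = -\la (1-\|x\|^2)P, \partial_i v\ra_{\ball} = -c\,\la P, \partial_i v\ra_{1} = 0,
$$
so $\partial_i$ carries $(1-\|x\|^2)\CV_{m-2}^d(\varpi_1) \oplus \CH_m^d$ into $\CV_{m-1}^d(\varpi_{0})$, not into $\CV_{m-1}^d(\varpi_1)$ as you assert (for the $\varpi_1$ pairing the integration by parts produces the extra term $2\la P, x_i v\ra_1$, which does not vanish when $m = n+1$ or $n+2$). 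Your identity does not even match the pattern of Lemma \ref{lem:diff-Sn-mu}, which at $\mu=-1$ predicts the target weight $\mu+1=0$. Once the correct identity is in place, the entire difficulty you then wrestle with --- upgrading a weighted bound $\|\cdot\|_{p,1}$ to the unweighted one, which you propose to do via an unproved (and nowhere-in-the-paper) unweighted $L^p$ boundedness of $S_{n,\eta}^1$ --- simply disappears: Theorem \ref{thm:near-best}(3) with $\mu=0$ gives $\|\partial_i f - S_{n-1,\eta}^0(\partial_i f)\|_{p,\ball} \le cE_{n-1}(\partial_i f)_{p,\ball}$ directly, and Corollary \ref{cor:Jackson} finishes \eqref{1stApprox1}.

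For \eqref{1stApprox0} your plan is only a sketch, and the one step that actually produces the factor $n^{-1}$ --- your ``gain of one derivative'' claim $\|u\|_{p,\ball} \le cn^{-1}\sum_i \|\partial_i u\|_{p,\ball}$ for $u = f - S_{n,\eta}^{-1}f$ --- is precisely what is left unproved; a trace/Poincar\'e inequality gives no power of $n$, and orthogonality of $u$ to low-degree polynomials in $\la\cdot,\cdot\ra_{-1}$ does not by itself yield it. The paper's mechanism is the Aubin--Nitsche duality argument you mention only in passing: with $g=f-S_{n,\eta}^{-1}f$ and $g^*=|g|^{p-1}\mathrm{sign}(g)$, solve the auxiliary Robin problem $-\Delta u = g^*$ in $\ball$, $\partial_\sn u + u = 0$ on $\sph$, use the regularity bound $\|u\|_{W_q^2(\ball)} \le c\|g^*\|_{q,\ball}$, the Galerkin orthogonality $\la S^{-1}_{\lfloor n/2\rfloor,\eta}u, g\ra_{-1}=0$, and the already-established $W^1_p$ estimate applied to $u$ with $r=2$ to obtain $\la g^*,g\ra_{\ball} \le cn^{-1}\|g^*\|_{q,\ball}\|g\|_{W^1_p(\ball)}$. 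That argument (or an equivalent one) must be written out, and it requires the full $W^1_p$ estimate for $g$, including the boundary term controlled through \eqref{DeltBndproj} and the harmonic projection together with the norm equivalence of Lemma \ref{EquivNorm1} --- ingredients you only allude to.
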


For $W_p^{s}(\ball)$, $s=2,3,\dots$, the errors are not directly bounded by $E_n(f)_{p,\ball}$
but we still have the order of convergence. 

\begin{thm}\label{nthApprox}
Let $r, s\in \NN$ and $r \ge s$. If $f \in W_p^{r}(\ball)$  and $1<p <\infty$, then, for $n \ge s$, 
\begin{align} \label{nth1}
 \| f -  S^{-s}_{n,\eta} f \|_{W_p^k(\ball)} \le  c n^{-r+k}  \|f\|_{W_p^r(\ball)},
 \quad k = 0, 1,\ldots, s, 
\end{align}
where $S_{n, \eta}^{-s}f$  can be replaced by $S_{n}^{-s}f$ if $p=2$.
\end{thm}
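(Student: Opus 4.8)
\textbf{Proof proposal for Theorem \ref{nthApprox}.}
The plan is to reduce the $W_p^s$-approximation problem for a general $f$ to the $s=1$ case (Theorem \ref{1stApprox}) and the weighted estimate \eqref{eq:approx-weight}, by exploiting the explicit orthogonal decomposition of $\CV_n^d(\varpi_{-s})$ furnished by Lemma \ref{lm:defQ} and Theorem \ref{thm:Hjn-rho}. The key structural observation, coming from those two results, is that $S_{n,\eta}^{-s}$ interacts well with $\Delta^k$ on the boundary sphere (via \eqref{DeltBndproj}) and with $\Delta^{\rhoh}$ in the interior. First I would split $f$ using a decomposition adapted to the inner product $\la\cdot,\cdot\ra_{-s}$: write $f = h + (1-\|x\|^2)^s g$ where $h$ collects the ``boundary data'' pieces $\Delta^k f|_{\sph}$ for $0 \le k \le \rhow-1$ (these can be lifted to polyharmonic-type functions via the $Y^{n,j}_\ell$ machinery of Lemma \ref{def:Ynj}, or more robustly handled by subtracting off an explicit correction), so that $g \in W_p^{r-2s}$ and $\Delta^k[(1-\|x\|^2)^s g]$ vanishes on $\sph$ for $k < \rhow$. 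By Lemma \ref{lem:proj-f=0g}, $\proj_n^{-s}$ acts on $(1-\|x\|^2)^s g$ simply as $(1-\|x\|^2)^s \proj_{n-2s}^s g$, which is where the negative-weight cancellation of the $\varphi^{|\a|}$ factor in \eqref{eq:approx-weight} finally pays off.

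Next I would establish the commuting relations that drive the induction. Using \eqref{DeltBndproj} together with Lemma \ref{lem:diff-Sn-mu} (applied to $S^s_{n}$ and $S^\CH_{n}$), one shows that for the interior half of the expansion, $\Delta^{\rhoh} S_{n,\eta}^{-s} f = c\, S_{n-s,\eta}^{0}(\Delta^{\rhoh} f)$ (when $s$ is even) or $c\, S^{-1}_{n-s+1,\eta}(\Delta^{\rhoh}f)$ (when $s$ is odd), by item 2 and item 3 of Lemma \ref{lm:defQ}; and on the sphere $\Delta^k S_{n,\eta}^{-s} f = S_{n-2k,\eta}^\CH \Delta^k f$ for $k \le \rhow-1$. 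This decouples the error $f - S_{n,\eta}^{-s}f$ into a part controlled by ordinary $L^2(\varpi_\mu)$-projection error for the derivatives $\nabla^s f$ (handled by \eqref{eq:approx-weight} with $\mu = 0$, since the weight $\varphi^{2s}$ coming from differentiating the $(1-\|x\|^2)^s$ factor exactly matches what \eqref{eq:approx-weight} produces) and a part controlled by spherical-harmonic approximation of the boundary traces $\Delta^k f|_\sph$ (handled by Corollary \ref{cor:JacksonS} / Theorem \ref{th:approxHarm}). One then needs an equivalence-of-norms statement asserting that $\|v\|_{W_p^s(\ball)}$ is comparable to $\sum_{|\a|=s}\|\partial^\a v\|_{p} + \sum_{k<\rhow}\|\Delta^k v\|_{W_p^{?}(\sph)} + \text{lower order}$ — precisely the kind of Hardy-inequality-flavored statement the authors defer to Appendix B — to reassemble the $W_p^k$ norm of the error from the pieces, and to handle the intermediate $k$ by interpolation between $k=0$ and $k=s$ exactly as in the proof of Theorem \ref{th:approxHarm}.

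The main obstacle I anticipate is controlling the ``cross terms'': the correction functions used to kill the boundary traces of $f$ are not polynomials, and one must check that replacing $f$ by $f - (\text{correction})$ does not destroy the $W_p^r$ bound nor introduce error that $S_{n,\eta}^{-s}$ fails to approximate at the optimal rate. Equivalently, one must verify that the splitting $f = h + (1-\|x\|^2)^s g$ can be done with $\|h\|_{W_p^r} + \|g\|_{W_p^{r-2s}} \le c\|f\|_{W_p^r}$ and that $h$ itself is approximated at rate $n^{-r+k}$ in $W_p^k$ — this is essentially a lifting/trace theorem for the polyharmonic operator on the ball, and getting the norms to line up in the $p \ne 2$ case (where one only has the near-best operator $S_{n,\eta}^{-s}$ rather than an orthogonal projection) is the delicate point. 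A secondary technical nuisance is that for odd $s$ the ``innermost'' level $j = \rhoh$ behaves differently (cf.\ the case distinction in Lemma \ref{lm:defQ} item 3 and in $h^{-s}_{j,n}$), so the induction on $s$ must be set up to absorb this single exceptional mode, most cleanly by peeling off one derivative to reduce $s$ to $s-1$ or $s-2$ and invoking Theorem \ref{1stApprox} and \eqref{eq:approx-weight} as the base cases.
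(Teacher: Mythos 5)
Your treatment of the top endpoint $k=s$ is essentially the paper's argument: the commuting relations you state, $\Delta^{\rhoh} S_{n,\eta}^{-s} f = S^{0}_{n-s,\eta}\Delta^{\rhoh}f$ for even $s$ and $S^{-1}_{n-s+1,\eta}\Delta^{\rhoh}f$ for odd $s$, together with $\Delta^k S_{n,\eta}^{-s}f\big\vert_{\sph} = S^{\CH}_{n-2k,\eta}\Delta^k f\big\vert_{\sph}$ for $k\le\rhow-1$, are exactly Theorem \ref{main-lemma3} and \eqref{DeltBndproj}, and reassembling $\|f-S_{n,\eta}^{-s}f\|_{W_p^s(\ball)}$ from these pieces via the norm equivalence of Lemma \ref{lem:EquivNorm} is precisely what the paper does. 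One correction of detail: the interior piece is then finished by Corollary \ref{cor:Jackson} (even $s$) resp.\ Theorem \ref{1stApprox} (odd $s$); the weighted estimate \eqref{eq:approx-weight} plays no role and cannot, since it carries the parasitic factor $\varphi^{|\a|}$ that this theorem is designed to remove.

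The genuine gap is the endpoint $k=0$. You propose to get the intermediate $k$ ``by interpolation between $k=0$ and $k=s$,'' but nothing in your outline actually establishes $\|f-S_{n,\eta}^{-s}f\|_{p,\ball}\le cn^{-r}\|f\|_{W_p^r(\ball)}$: the norm-equivalence decomposition only reconstitutes the $W_p^s$ norm of the error, and the additional gain of $n^{-s}$ in passing from $W_p^s$ down to $L^p$ does not follow from it. The paper obtains this gain by an Aubin--Nitsche duality argument: one solves the auxiliary $2s$-th order boundary value problem \eqref{auxrho} with datum $g^*=|g|^{p-1}\mathrm{sign}(g)$, invokes the isomorphism $\Delta^s:\CU_q^{2s}(\ball)\to L^q(\ball)$ to get $\|u\|_{W_q^{2s}(\ball)}\le c\|g^*\|_{q,\ball}$, and then uses Galerkin orthogonality $\la g, S^{-s}_{\lfloor n/2\rfloor,\eta}u\ra_{-s}=0$ together with the already-proved $k=s$ estimate applied to $u$. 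Your alternative route --- splitting $f=h+(1-\|x\|^2)^sg$ and invoking Lemma \ref{lem:proj-f=0g} --- is exactly the step you flag as the ``main obstacle,'' and it is not a minor one: producing $g$ with $r$ (weighted) derivatives controlled by $\|f\|_{W_p^r(\ball)}$ is a nontrivial Hardy-inequality chain that the paper deliberately sidesteps, and even granting it, the resulting Jackson estimate for $g$ would not obviously return the full rate $n^{-r}$ for the $L^p$ error. Once both endpoints $k=0$ and $k=s$ are in hand, the intermediate cases do follow from Lemma \ref{lem:interD}, as you indicate.
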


We denote by $\accentset{\circ}{W}_p^{s}(\ball)$ the subspace of $W_p^s(\ball)$ defined by 
\begin{equation}\label{Snom1circ}
  \accentset{\circ}{W}_p^{s}(\ball) := \left \{f\in  W_p^{s}(\ball):  \partial_\sn ^k f \big \vert_{\sph} =0, 
    k=0,1,\ldots, s-1\right \},
\end{equation}
where $\partial_\sn$ denote the normal derivative of $\sph$. For $s =1,2,\ldots$ and $1 \le  p \le \infty$, we define 
a semi-norm of $W_p^s(\ball)$ by
\begin{align} \label{semi-norm-ball}
  |f|_{W_p^s(\ball)} : = \Big(\sum_{\a \in \NN^d_0,\, |\a| = s} \|\partial^\a f\|_{p,\ball}^p \Big)^{1/p}.
\end{align}
If $f \in \accentset{\circ}{W}_p^{s}(\ball)$, then it can be shown (see the end of Appendix B) that, for $1 <  p < \infty$, 
\begin{equation} \label{eq:norm-zeroboundary}
\|f\|_{W^s_p(\ball)} \sim |f|_{W^s_p(\ball)}  \sim  \|\nabla^r f \|_{p, \ball} \sim \sum_{i=1}^d \|\partial_i^s f\|_{p,\ball}. 
\end{equation}
%Recall that $|\cdot|_{W_p^s}^\circ$ is defined in \eqref{semi-normB}.
%$$
 %  |f |_{W_p^s}^\circ = \sum_{1 \le i<j \le d}  \| D_{i,j}^{s}  f\|_{p,\ball} 
 %       +   \sum_{i=1}^d  \| \varphi^{s} \partial_i^{s} f\|_{p,\ball}.
%$$

\begin{thm} \label{nthApprox=Wcirc}
Let $r, s \in \NN$ and  $k \in \NN_0$. If $f\in\accentset{\circ}{W}_p^{s}(\ball)\cap {W}_p^{r}(\ball)$
with $r\ge s$  and $1 < p < \infty$,
then, for $n \ge s$, 
\begin{align} \label{nth2}
\|f-{S}_{n,\eta}^{-s} f \|_{W_p^k(\ball)} & \le  c n^{-r+k}   \sum_{|\a| = s} |\partial^\a f |_{W_p^{r-s}(\ball)}^\circ  
   \le  c n^{-r+k}     \|  f \|_{W_p^{r}(\ball)}.
\end{align}
where $S_{n, \eta}^{-s}f$  can be replaced by $S_{n}^{-s}f$ if $p=2$.
\end{thm}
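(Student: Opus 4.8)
The plan is to reduce the estimate on $\accentset{\circ}{W}_p^s(\ball)$ to the already-established Theorem~\ref{nthApprox}, exploiting the extra structure coming from the vanishing boundary data. First I would observe that, for $f \in \accentset{\circ}{W}_p^s(\ball)$, the boundary terms in $\la \cdot,\cdot\ra_{-s}$ are silent: since $\partial_{\sn}^k f|_{\sph} = 0$ for $k = 0,1,\ldots,s-1$, one has $\Delta^k f|_{\sph} = 0$ for $0 \le k \le \rhow-1$ (each $\Delta^k f$ is a polynomial combination of normal and tangential derivatives of order $\le 2k \le s-1$, all of which vanish on $\sph$), so that $\la f, g\ra_{-s} = \la \nabla^s f, \nabla^s g\ra_{\ball}$ for all $g \in W_2^s(\ball)$, and by \eqref{eq:norm-zeroboundary} this is comparable to $\|f\|_{W_p^s(\ball)}^2$ when $p = 2$. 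The key structural input is a factorization: a function in $\accentset{\circ}{W}_p^s(\ball)$ that is a polynomial can be written as $(1-\|x\|^2)^s$ times a lower-degree polynomial, so Lemma~\ref{lem:proj-f=0g} applies to the relevant pieces. More importantly, the defining property \eqref{eq:Sn-1*} of $S_{n,\eta}^{-s}f$, together with Lemma~\ref{DeltBndproj} which shows $\Delta^k \proj_n^{-s} f|_{\sph} = \proj_{n-2k}^{\CH}\Delta^k f|_{\sph} = 0$ for $f \in \accentset{\circ}{W}_p^s(\ball)$, tells us that the error $f - S_{n,\eta}^{-s}f$ again lies in (the closure of) $\accentset{\circ}{W}_p^s$, and the error is controlled purely by the top-order term $\|\nabla^s(f - S_{n,\eta}^{-s}f)\|_{p,\ball}$.

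Next I would set up the iteration on derivatives. Using the commutation relation between $\proj_n^{-s}$ and $\Delta$ implicit in Lemma~\ref{DeltaP} and Lemma~\ref{lm:defQ} (analogous to Lemma~\ref{lem:diff-Sn-mu} for the $\varpi_\mu$ case), I expect that $\nabla^s S_{n,\eta}^{-s}f$ is, up to harmless constants, the near-best $\varpi_0$- or $\varpi_{-1}$-projection of $\nabla^s f$: precisely, from item~2 and item~3 of Lemma~\ref{lm:defQ}, $\Delta^{\rhoh} Q^{-s,n}_{j,\ell}$ equals a constant times $P^{0,n-s}_{j-s/2,\ell}$ (even $s$) or $P^{-1,n-s+1}_{\cdot,\ell}$ (odd $s$), so $\Delta^{\rhoh} S_{n,\eta}^{-s}f = c\, S^{0}_{n-s,\eta}(\Delta^{\rhoh}f)$ resp. $c\, S^{-1}_{n-s+1,\eta}(\Delta^{\rhoh}f)$. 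Then I would apply Corollary~\ref{cor:Jackson} to $\Delta^{\rhoh}f$ (after one more pass through Lemma~\ref{lem:diff-Sn-mu} to peel off the remaining single gradient when $s$ is odd), obtaining
\begin{align*}
  \|\nabla^s f - \nabla^s S_{n,\eta}^{-s}f\|_{p,\ball}
   \le c\, E_{n-s}(\nabla^s f)_{p,\ball}
   \le c\, n^{-(r-s)} \sum_{|\a|=s} |\partial^\a f|^\circ_{W_p^{r-s}(\ball)}.
\end{align*}
The transition from the angular/$\varphi$-weighted semi-norm in $E_n(\cdot)_{p,\ball}$ to the plain $|\cdot|^\circ_{W_p^{r-s}}$ is exactly Corollary~\ref{cor:Jackson}, so no new harmonic analysis is needed here.

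Finally I would recover the lower-order norms $\|f - S_{n,\eta}^{-s}f\|_{W_p^k(\ball)}$ for $0 \le k \le s$. Since $f - S_{n,\eta}^{-s}f \in \accentset{\circ}{W}_p^s(\ball)$ (boundary data vanish, as noted), the norm equivalence \eqref{eq:norm-zeroboundary} applied at each intermediate order — more precisely a Poincaré-type inequality $\|g\|_{W_p^k(\ball)} \le c\|\nabla^{s-k}\text{-derivatives of }g\|$, valid because the relevant traces vanish — lets me bound $\|f - S_{n,\eta}^{-s}f\|_{W_p^k}$ by $n^{-(s-k)}\|\nabla^s(f - S_{n,\eta}^{-s}f)\|_{p,\ball}$; combined with the display above this gives the claimed $n^{-r+k}$. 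The $p=2$ case with $S_n^{-s}$ in place of $S_{n,\eta}^{-s}$ follows since $S_n^{-s}f$ is then the genuine orthogonal projection in $\la\cdot,\cdot\ra_{-s}$, hence minimizes $\|\nabla^s(f-v)\|_{\ball}$ over $v \in \Pi_n^d$, so the same bounds hold a fortiori. The main obstacle I anticipate is making rigorous the Poincaré-type step — that on $\accentset{\circ}{W}_p^s(\ball)$ the intermediate Sobolev norms are dominated by the top-order semi-norm with the correct power of $n^{-1}$ gained at each descent — since for $p \ne 2$ this is not a Hilbert-space projection argument but requires either the interpolation machinery of Appendix~B or a direct scaling/duality argument; everything else is bookkeeping with the explicit basis $Q^{-s,n}_{j,\ell}$ and the cited Jackson theorems.
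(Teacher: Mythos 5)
Your treatment of the top-order case $k=s$ is essentially the paper's argument: since $\partial_\sn^j f\vert_{\sph}=0$ for $j\le s-1$ forces $\Delta^k f\vert_{\sph}=0$ for $k\le\rhow-1$, the boundary terms in $\la\cdot,\cdot\ra_{-s}$ drop out, the commutation identity $\Delta^{\rhoh}S_{n,\eta}^{-s}f=S_{n-2\rhoh,\eta}^{2\rhoh-s}\Delta^{\rhoh}f$ reduces everything to Jackson's inequality for $\nabla^s f$, and one obtains $\|f-S_{n,\eta}^{-s}f\|_{W_p^s(\ball)}\le c\,n^{-r+s}\sum_{|\a|=s}|\partial^\a f|^\circ_{W_p^{r-s}(\ball)}$. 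That part is correct and is exactly how the paper replaces \eqref{high-der2}.

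The gap is the descent from $k=s$ to $k<s$. You propose to gain the factor $n^{-(s-k)}$ from a ``Poincar\'e-type inequality'' on $\accentset{\circ}{W}_p^{s}(\ball)$, using that the error $g=f-S_{n,\eta}^{-s}f$ again has vanishing traces. Even granting that $g\in\accentset{\circ}{W}_p^{s}(\ball)$ (which does follow from Lemma \ref{lem:proj-f=0g}), no inequality of Poincar\'e type can produce a power of $n$: it compares two norms of a single fixed function with an $n$-independent constant, so it can only give $\|g\|_{W_p^k(\ball)}\le c\|\nabla^s g\|_{p,\ball}\le c\,n^{-r+s}(\cdots)$, which is strictly weaker than the claimed $n^{-r+k}$ for every $k<s$. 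The extra decay in weaker norms is a genuinely different phenomenon and is obtained in the paper by the Aubin--Nitsche duality argument: one solves the auxiliary $2s$-th order problem $(-\Delta)^s u=|g|^{p-1}\mathrm{sign}(g)$ with the boundary conditions of \eqref{auxrho}, uses the Galerkin orthogonality $\la g, S^{-s}_{\lfloor n/2\rfloor,\eta}u\ra_{-s}=0$ together with the regularity bound $\|u\|_{W_q^{2s}(\ball)}\le c\|g^*\|_{q,\ball}$ to get the case $k=0$, and then interpolates via Lemma \ref{lem:interD} for $0<k<s$. You flag this as ``the main obstacle'' and mention duality as a fallback, but you do not carry it out; since the paper's proof of this theorem consists precisely of the semi-norm replacement above followed by rerunning the proof of Theorem \ref{nthApprox} verbatim (duality plus interpolation), the missing step is the entire content of the $k<s$ cases. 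Note also that for $p=2$ your ``a fortiori'' minimization remark only controls the $\la\cdot,\cdot\ra_{-s}$-norm of the error, not the lower-order $W_2^k$ norms, so the duality step is needed there as well.
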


It should be pointed out that if $f\in\accentset{\circ}{W}_p^{s}(\ball)$, then $f(x) = (1-\|x\|^2)^s g(x)$ for
some $g \in W_p^s(\ball)$ and, by Lemma \ref{lem:proj-f=0g},
$$
 S_{n,\eta}^{-s} f(x) = (1-\|x\|^2)^s S_{n-2s,\eta}^{s} g(x).
$$

These results will be proved in the following subsections, where the following observation will be useful.
For $f \in W_p^s(\ball)$,  it follows immediately from applying the H\"older inequality on $\ball$ and $\sph$ that 
$$
| \la f, g\ra_{-s} |\le   \Big( \| \nabla^s f\|_{p,\ball}  +  \sum_{k=0}^{\rhow-1} \sqrt{\lambda_k}  \|\Delta^k f\|_{p,\sph} \Big)
 \Big( \| \nabla^s g\|_{p,\ball}  +  \sum_{k=0}^{\rhow-1}\sqrt{\lambda_k}  \|\Delta^k g\|_{p,\sph} \Big). 
$$
Using  Lemma \ref{lem:EquivNorm} and the inequality \eqref{eq:imbedding}, we obtain the following lemma.

\begin{lem}
For  $f \in W_p^s(\ball)$ and $g \in W_q^s(\ball)$, with $\f 1 p + \f 1 q =1$ and $1 < p < \infty$, 
\begin{equation}\label{eq:Holder}
   |\la f, g \ra_{-s}| \le c \| f \|_{W^{s}_p(\ball)}  \| g \|_{{W^{s}_p(\ball)}}.  
\end{equation}
\end{lem}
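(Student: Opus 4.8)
The plan is to start from the inequality displayed immediately before the lemma, which is nothing but the H\"older inequality applied separately on $\ball$ and on $\sph$. That inequality already splits $|\la f,g\ra_{-s}|$ into a product of two factors,
$$
\Big( \| \nabla^s f\|_{p,\ball}  +  \sum_{k=0}^{\rhow-1} \sqrt{\lambda_k}\, \|\Delta^k f\|_{p,\sph} \Big)
\quad\text{and}\quad
\Big( \| \nabla^s g\|_{q,\ball}  +  \sum_{k=0}^{\rhow-1} \sqrt{\lambda_k}\, \|\Delta^k g\|_{q,\sph} \Big),
$$
so it suffices to bound the first by $c\,\|f\|_{W^s_p(\ball)}$ and, by the same argument with $p$ replaced by $q$, the second by $c\,\|g\|_{W^s_q(\ball)}$.

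The term $\|\nabla^s f\|_{p,\ball}$ is harmless: by the definition of $\nabla^s$, it is either $\Delta^{s/2}f$ or $\nabla\Delta^{(s-1)/2}f$, hence a fixed linear combination of partial derivatives of $f$ of order $s$, so $\|\nabla^s f\|_{p,\ball}\le c\,\|f\|_{W^s_p(\ball)}$. The point is the boundary terms. For $0\le k\le \rhow-1$ one checks that $2k\le s-1$ (with equality possible only when $s$ is odd), so $\Delta^k f$ is a linear combination of partial derivatives of $f$ of order $2k\le s-1$; in particular $\Delta^k f\in W^1_p(\ball)$ and, since $2k+1\le s$,
$$
\|\Delta^k f\|_{W^1_p(\ball)}\le c\,\|f\|_{W^{2k+1}_p(\ball)}\le c\,\|f\|_{W^s_p(\ball)}.
$$

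I would then invoke the trace embedding \eqref{eq:imbedding}, which for $1<p<\infty$ gives $\|h\|_{p,\sph}\le c\,\|h\|_{W^1_p(\ball)}$ for $h\in W^1_p(\ball)$; applying it with $h=\Delta^k f$ and combining with the previous bound yields $\|\Delta^k f\|_{p,\sph}\le c\,\|f\|_{W^s_p(\ball)}$. Summing over the finitely many $k$ and absorbing the constants $\sqrt{\lambda_k}$ into $c$ bounds the first factor, and likewise the second; multiplying gives \eqref{eq:Holder}. Here \lemref{lem:EquivNorm} is used only as a convenience, to pass freely among the several equivalent norms of $W^s_p(\ball)$ when identifying $\|\nabla^s\cdot\|_{p,\ball}$ and the intermediate quantities with $\|\cdot\|_{W^s_p(\ball)}$. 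I do not expect a genuine obstacle in this lemma: the only non-elementary input is the trace embedding \eqref{eq:imbedding}, and that — together with the norm equivalences — is exactly what Appendix B is set up to supply; everything else is routine bookkeeping with Sobolev norms.
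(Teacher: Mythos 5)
Your proof is correct and takes essentially the same route as the paper: H\"older's inequality applied separately on $\ball$ and $\sph$, followed by bounding each resulting factor by the full Sobolev norm on the ball (and yes, the $\|g\|_{W^s_p(\ball)}$ on the right of \eqref{eq:Holder} should be $\|g\|_{W^s_q(\ball)}$, as you implicitly assume). One labeling slip: the inequality $\|h\|_{p,\sph}\le c\,\|h\|_{W^1_p(\ball)}$ you invoke is a trace inequality from the ball to the sphere, which is \emph{not} what \eqref{eq:imbedding} states --- that is an embedding between fractional Sobolev spaces on $\sph$ itself; in the paper's framework the needed bound $\|\Delta^k f\|_{p,\sph}\le c\,\|f\|_{W_p^s(\ball)}$ comes from combining \eqref{eq:imbedding} with one direction of the equivalence \eqref{eq:EquivNorm2} in Lemma \ref{lem:EquivNorm}, so the fact you use is available, just under a different citation.
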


\subsection{Approximation by polynomials in $W_p^1(\ball)$}
According to Theorem \ref{thm:OPnabla}, if $f\in W^1_p(\ball)$, then we can decompose $f$
into two parts
\begin{align*}
  f(x) = (1-\|x\|^2) g(x) + h(x), \quad \hbox{where}\quad  \Delta h = 0.  
\end{align*}
Then it is readily checked by Lemma \ref{lem:proj-f=0g} that
\begin{align*}
  \proj_n^{-1}f(x) = (1-\|x\|^2) \proj_{n-2}^1 g(x) + \proj_n^{\CH} h(x).  
\end{align*}

For $f \in W_p^1(\ball)$, let $S_n^{-1} f $ be defined as in \eqref{eq:Sn-1}. The following lemma is 
essential for the proof of Theorem \ref{1stApprox}. 

\begin{thm} \label{main-lemma}
For $1 \le i \le d$ and $n= 1,2,\ldots$, 
\begin{equation}\label{eq:main-lemma}
 \partial_i S_n^{-1} f = S_{n-1}^0 (\partial_i f) \quad\hbox{and} \quad  \partial_i S_{n,\eta}^{-1} f 
   = S_{n-1,\eta}^0 (\partial_i f).
\end{equation}
\end{thm}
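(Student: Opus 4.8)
The plan is to prove the identity $\partial_i S_n^{-1} f = S_{n-1}^0(\partial_i f)$ by showing that both sides coincide as orthogonal projections, and then deduce the $S_{n,\eta}$ version by the same telescoping trick used in the proof of Lemma~\ref{lem:diff-Sn-mu}. The key structural fact I would exploit is the decomposition $\CV_n^d(\varpi_{-1}) = (1-\|x\|^2)\CV_{n-2}^d(\varpi_1) \oplus \CH_n^d$ from Theorem~\ref{thm:OPnabla}, together with the companion decomposition $\proj_n^{-1} f(x) = (1-\|x\|^2)\proj_{n-2}^1 g(x) + \proj_n^{\CH} h(x)$ recorded just above, where $f = (1-\|x\|^2)g + h$ with $\Delta h = 0$.

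First I would establish that $\partial_i$ maps $\CV_m^d(\varpi_{-1})$ into $\Pi_{m-1}^d$ and that $\la \partial_i P, Q \ra_{\ball} = 0$ for every $P \in \CV_m^d(\varpi_{-1})$ and every $Q \in \Pi_{m-2}^d$; equivalently, $\partial_i P \in \CV_{m-1}^d(\varpi_0) = \CV_{m-1}^d(\varpi_{-1}) \cap \Pi_{m-1}^d$ modulo lower degree, but more precisely I want $\partial_i P$ to be genuinely orthogonal to $\Pi_{m-2}^d$ with respect to the plain $L^2(\ball)$ inner product. To see this, split $P = (1-\|x\|^2) P_1 + P_2$ with $P_1 \in \CV_{m-2}^d(\varpi_1)$ and $P_2 \in \CH_m^d$. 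For the harmonic piece, $\partial_i P_2 \in \CH_{m-1}^d$ (derivatives of harmonic polynomials are harmonic), which is orthogonal to $\Pi_{m-2}^d$ in $L^2(\ball)$ since solid harmonics of degree $m-1$ are $\varpi_0$-orthogonal to lower degree polynomials. For the first piece, $\partial_i[(1-\|x\|^2)P_1] = -2x_i P_1 + (1-\|x\|^2)\partial_i P_1$; I would use Green's/integration-by-parts identities exactly as in Lemma~\ref{lem:diff-Sn-mu} — invoking Lemma~\ref{lm:DiffV} which gives $\partial_i \CV_k^d(\varpi_\mu) \subset \CV_{k-1}^d(\varpi_{\mu+1})$ — to show $\la \partial_i[(1-\|x\|^2)P_1], Q\ra_{\ball} = \la \partial_i P_1, Q \ra_1 \cdot(\text{const})$-type relation vanishes for $Q \in \Pi_{m-2}^d$. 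The cleanest route: integrate by parts to move $\partial_i$ onto $Q$, picking up a boundary term over $\sph$ that vanishes because $(1-\|x\|^2)P_1$ vanishes on $\sph$, leaving $-\la (1-\|x\|^2)P_1, \partial_i Q\ra_{\ball}$, and $(1-\|x\|^2)P_1 \in \CV_m^d(\varpi_{-1}) \cap$... — actually since $P_1 \in \CV_{m-2}^d(\varpi_1)$, $\la (1-\|x\|^2)P_1, R\ra_{\ball} = \la P_1, R\ra_1 = 0$ for all $R \in \Pi_{m-3}^d$, and $\partial_i Q \in \Pi_{m-3}^d$. That handles it.

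Granting this, the argument closes as in Lemma~\ref{lem:diff-Sn-mu}: write $f - S_n^{-1} f = \sum_{m > n}\proj_m^{-1} f$, so $\partial_i(f - S_n^{-1}f) = \sum_{m>n}\partial_i\proj_m^{-1}f$ is a sum of terms each orthogonal to $\Pi_{n-1}^d$ in $L^2(\ball)$; hence $\la \partial_i f - \partial_i S_n^{-1}f, v\ra_{\ball} = 0$ for all $v \in \Pi_{n-1}^d$, i.e. $S_{n-1}^0(\partial_i f - \partial_i S_n^{-1}f) = 0$. Since $\partial_i S_n^{-1}f \in \Pi_{n-1}^d$ and $S_{n-1}^0$ reproduces $\Pi_{n-1}^d$, this yields $S_{n-1}^0(\partial_i f) = \partial_i S_n^{-1}f$. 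For the second identity, subtract consecutive partial sums: $\proj_n^{-1} = S_n^{-1} - S_{n-1}^{-1}$ gives $\partial_i \proj_n^{-1} f = \proj_{n-1}^0(\partial_i f)$, and then $\partial_i S_{n,\eta}^{-1}f = \sum_k \eta(k/n)\partial_i\proj_k^{-1}f = \sum_k \eta(k/n)\proj_{k-1}^0(\partial_i f) = S_{n-1,\eta}^0(\partial_i f)$ after re-indexing (noting $\proj_{-1}^0 = 0$ and $\eta((k+1)/n) \leftrightarrow \eta(k/n)$ — here one should check the index shift is consistent with the definition of $S_{n,\eta}$, which is the only mildly delicate bookkeeping point).

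The main obstacle I anticipate is the integration-by-parts step for the term $\partial_i[(1-\|x\|^2)P_1]$: one must be careful that the boundary contribution over $\sph$ genuinely vanishes (it does, because the factor $1-\|x\|^2 = \varphi^2$ kills it) and that the degree count $\partial_i Q \in \Pi_{m-3}^d$ lines up with the orthogonality $P_1 \perp_1 \Pi_{m-3}^d$; a degenerate case is small $m$ (e.g. $m \le 2$) where some of these spaces are trivial, which should be checked separately but is routine. Everything else is a direct transcription of the template in Lemma~\ref{lem:diff-Sn-mu}.
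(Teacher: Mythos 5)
Your proposal is correct and follows essentially the same route as the paper: the paper also reduces the claim to showing $\la \partial_i(f-S_n^{-1}f), v\ra_{\ball}=0$ for $v\in\Pi_{n-1}^d$, splits each $\proj_m^{-1}f$ into the $(1-\|x\|^2)\CV_{m-2}^d(\varpi_1)$ part (handled by integration by parts, with the boundary term killed by the factor $1-\|x\|^2$ and the remaining term killed by the $\varpi_1$-orthogonality of $P^{1,m-2}_{j-1,\ell}$ to $\Pi_{m-3}^d$) and the harmonic part (handled by $\partial_i\CH_m^d\subset\CH_{m-1}^d$), and obtains the $S_{n,\eta}^{-1}$ identity by the same telescoping of $\proj_n^{-1}=S_n^{-1}-S_{n-1}^{-1}$. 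The only cosmetic difference is that the paper argues basis element by basis element via \eqref{PN2P} rather than through the abstract decomposition of Theorem~\ref{thm:OPnabla}, which is the same content.
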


\begin{proof}
Since $\partial_i S_n^{-1}f \in \Pi_{n-1}^d$, it suffices to prove 
\begin{align}
\label{DSn-1}
\left \langle \partial_i S_n^{-1}f-\partial_if, v\right \rangle_{\ball} = 0, \quad \forall v\in  \Pi_{n-1}^d,
\end{align}
for the first identity of \eqref{eq:main-lemma}. From the Fourier expansion of $f$,
\begin{align*}
f = \sum_{m\ge 0} \sum_{0\le j\le m/2} \sum_{\ell=1}^{a^d_{m-2j}}\wh{f}^{-1,m}_{j, \ell} P^{-1,m}_{j, \ell},  
\qquad  \wh{f}^{-1,m}_{j, \ell} = (h^{-1,m}_{j, \ell})^{-1} \la f, P^{-1,m}_{j, \ell}\ra_{-1},
\end{align*}
it follows that 
\begin{align*}
&\partial_if-\partial_i S_n^{-1}f= \sum_{m \ge n+1}\sum_{0\le j\le m/2} \sum_{\ell=1}^{a^d_{m-2j}}\wh{f}^{-1,m}_{j, \ell}\partial_i  P^{-1,m}_{j, \ell}.
\end{align*}
If $j \ge 1$, by integration by part and \eqref{PN2P}, we obtain that, for $m\ge n+1$,
\begin{align*}
\big \langle \partial_i P^{-1,m}_{j, \ell}, v\big \rangle_{\ball} 
= c \big  \langle (1-\|x\|^2) P^{1,m-2}_{j-1,\ell}, \partial_i v \big  \rangle_{\ball}  = 0, \quad v\in \Pi_{n-1}^d.
\end{align*}
If $j =0$, then $P^{-1,m}_{0, \ell}(x) = c Y^m_{\ell}(x)$. Since $\partial_iY^m_{\ell} $ is a homogenous polynomial
of degree $m-1$, and  $\Delta \partial_iY^m_{\ell} =0$, it follows that $\partial_iY^m_{\ell}
\in \CH^{d}_{m-1} \subseteq \CV_{m-1}^d(\varpi_0)$, so that 
\begin{align*}
\big  \langle \partial_i P^{-1,m}_{0,\ell}, v \big \rangle_{\ball}  = 0, \quad m\ge n+1, \quad v\in \Pi_{n-1}^d.
\end{align*}
As a result, \eqref{DSn-1} holds, which proves the first identity of \eqref{eq:main-lemma}.

Since $\partial_i\proj_n^{-1}f=\partial_i(S_n^{-1}f - S_{n-1}^{-1}f) 
= S_{n-1}^{0}\partial_i f - S_{n-2}^{0}\partial_i f =  \proj_{n-1}^{-1} \partial_i f$,
the second identity of \eqref{eq:main-lemma} follows readily. 
\end{proof}
 
\noindent
{\it Proof of Theorem \ref{1stApprox}}.
For $1 \le i \le d$, by \eqref{eq:main-lemma}, Theorem \ref{thm:near-best}
and \eqref{Jackson}, we obtain
$$
   \|\partial_i f - \partial_i S_{n,\eta}^{-1} f\|_{p,\ball} = \|\partial_i f  - S_{n-1,\eta}^0(\partial_i f)\|_{p,\ball}
     \le c E_{n-1}(\partial_i f)_{p,\ball} \le c n^{-r+1} \| \partial_i f \|_{W_p^{r-1}(\ball)}, 
$$
which proves \eqref{1stApprox1}. Furthermore, by \eqref{DeltBndproj} and  \eqref{JacksonS2}, 
$$
  \|f - S_{n,\eta}^{-1} f\|_{p, \sph} = \| f - S_{n,\eta}^{\CH} f\|_{p, \sph} \le c n^{-r+1} \|f\|_{W_p^{r-1}(\sph)}.
$$
Putting these together then applying Lemma \ref{EquivNorm1} with $s=1$ and $\CF(f)=\|f\|_{p,\sph}$, 
Lemma \ref{EquivNorm1}  with $s=r$,  and $\CF(f)=\|f\|_{W^{r-1}_p(\sph)}$,  we obtain
\begin{align} \label{eq:der-est-s=1}
 \| f- S_{n,\eta}^{-1} f \|_{W^{1}_p(\ball)} & \le c n^{-r+1} \left[  \|\nabla f\|_{W_p^{r-1}} +  \|f\|_{W_p^{r-1}(\sph)}\right] 
        \le c n^{-r+1} \|f\|_{W_p^{r}}. 
\end{align}

To prove \eqref{1stApprox0}, we use the Aubin-Nitsche duality argument.
We define 
\begin{align*}
  g = f - S_{n,\eta}^{-1} f \quad \hbox{and}\quad  g^* = \begin{cases} |g|^{p-1}\mathrm{sign}(g), & g \neq 0,\\
    0, & g = 0.
  \end{cases}
\end{align*}
Then $g \in L^p(\ball)$, $g^*\in L^q(\ball)$ and $\|g\|_{p,\ball} \|g^*\|_{q,\ball}=\la g^*, g\ra_{\ball} = \|g\|_{p,\ball}^p = \|g^*\|_{q,\ball}^q$ 
with $\frac1{p}+\frac1{q}=1$.
Consider the following auxiliary  elliptic boundary value problem  
\begin{align*}
   -\Delta u = g^*  \text{ \  \  in \  \ }  \ball, \quad \text{ and } \quad    \partial_\sn u + u   = 0  \text{ \  \  on \  \ } \sph. 
\end{align*}%\accentset{\circ}
It admits a unique solution $u\in {W}^{2}_{q}(\ball)$ such that $\|u\|_{W^2_q}\le c \|g^*\|_q$ 
(\cite[Theorem 5.5.2, pp.\,390-391]{Triebel}). Let $\la \cdot,\cdot \ra_{-1}$ denote the inner product \eqref{eq:ipd-s}
with $\l_0 =d$. The equivalent variational form reads 
\begin{equation} \label{eq:variation-1}
    \la u,v\ra_{-1} = \la \nabla u, \nabla v \ra_{\ball} + d\la u, v\ra_{\sph}  =  \la g^*, v\ra_{\ball} \qquad \forall v \in W^1_p(\ball).
\end{equation}
Since $S^{-1}_{n,\eta}$ reproduces polynomials of degree $n$, it follows that
$$
  \big \langle  S_{\lfloor \f n 2 \rfloor, \eta}^{-1} u, g  \big \rangle_{-1}
      = \big \langle  S_{\lfloor \f n 2 \rfloor, \eta}^{-1} u,   u - S_{n,\eta}^{-1} u \big \rangle_{-1} =0,
$$ 
Consequently, by \eqref{eq:variation-1} with $v = g$, the H\"older inequality \eqref{eq:Holder},  and 
\eqref{eq:der-est-s=1} with $r =2$, we obtain 
\begin{align*}
 \la g^*, g\ra_{\ball} & = \la u ,g\ra_{-1} = \big \langle u- S_{\lfloor \f n 2 \rfloor, \eta}^{-1} u, g \big \rangle_{-1} \\
  &  \le  \|u -   S_{\lfloor \f n 2 \rfloor, \eta}^{-1} u \|_{W^1_q(\ball)} \|g\|_{W^1_p(\ball)}  
      \le c n^{-1} \| u\|_{W^2_q(\ball)}   \|g\|_{W^1_p(\ball)} .
\end{align*}
We then apply $\|u\|_{W_q^2} \le c \|g^*\|_{q,\ball}$ and \eqref{eq:der-est-s=1} again to obtain 
\begin{align*}
  \|g\|_{p,\ball}  =  \frac{ \la g^*, g\ra_{\ball}} {\|g^*\|_{q,\ball}}  \le c n^{-1} \|g\|_{W^1_p(\ball)} 
           \le c n^{-r} \|f\|_{W_p^r(\ball)}. 
\end{align*}
This completes the proof of \eqref{1stApprox0}.
\qed

\subsection{Approximation by polynomials in $W_p^{s}(\ball)$} 
By definition, $S_n^{-s} f$ is a polynomial in $\Pi_n^d$ and $S_{n,\eta}^{-s} f$ is a polynomial in $\Pi_{2n-1}^d$.
We need an analogue of Theorem \ref{main-lemma}. 

\begin{thm} \label{main-lemma3}
For $n, s \in \NN_0^d$ and $n \ge s$, 
\begin{align}
\label{DeltaS}
  &\Delta^{\rhoh} S_n^{-s} f = S_{n-2\rhoh}^{2\rhoh-s} \Delta^{\rhoh}f ,
   \quad \Delta^{\rhoh} S_{n,\eta}^{-s} f = S_{n-2\rhoh,\eta}^{2\rhoh-s} \Delta^{\rhoh}f.
\end{align}
\end{thm}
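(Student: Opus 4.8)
\emph{Proof proposal.}
The plan is to prove the degreewise identity
$$
  \Delta^{\rhoh}\proj_{m}^{-s} f \;=\; \proj_{m-2\rhoh}^{\,2\rhoh-s}\,\Delta^{\rhoh} f
  \qquad\text{for every }m\ge 0
$$
(with both sides read as $0$ when $m<2\rhoh$), and then to read off \eqref{DeltaS} by summing this over the degrees $\le n$, resp.\ by forming the $\eta$-weighted sum, exactly as the second identity in \eqref{eq:main-lemma} was obtained from the first in the proof of Theorem \ref{main-lemma}. Note that $2\rhoh-s\in\{0,-1\}$, so the target expansion is the $L^2(\ball)$ one when $s$ is even and the one attached to $\la\cdot,\cdot\ra_{-1}$ when $s$ is odd; in the odd case the natural constant in this $\la\cdot,\cdot\ra_{-1}$ comes out as $\lambda_{\rhoh}$, but since $\proj_m^{-1}$ is in fact independent of that constant (the same cancellation used to prove \eqref{DeltBndproj}), the statement is unambiguous.

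I would first isolate two structural consequences of Lemma \ref{lm:defQ}. (a) \emph{$\Delta^{\rhoh}$ maps $\CV_m^d(\varpi_{-s})$ into $\CV_{m-2\rhoh}^d(\varpi_{2\rhoh-s})$:} by items 2 and 3 of Lemma \ref{lm:defQ}, $\Delta^{\rhoh}Q^{-s,m}_{j,\ell}$ is a nonzero constant multiple of $P^{2\rhoh-s,\,m-2\rhoh}_{j-\rhoh,\ell}$ for $\rhoh\le j\le m/2$ and vanishes for $j<\rhoh$, together with, in the odd case and for $j=\rhoh$, the value $Y^{m-2\rhoh}_\ell=P^{-1,m-2\rhoh}_{0,\ell}$, which by Theorem \ref{thm:OPnabla} lies in $\CV_{m-2\rhoh}^d(\varpi_{-1})$ as well. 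In particular $\Delta^{\rhoh}\proj_m^{-s} f\in\CV_{m-2\rhoh}^d(\varpi_{2\rhoh-s})$. (b) \emph{$\Delta^{\rhoh}$ can be inverted inside the $Q$-basis with prescribed vanishing traces:} since for $m\ge s$ the constants in (a) are nonzero, every $v\in\CV_{m-2\rhoh}^d(\varpi_{2\rhoh-s})$ can be written as $\Delta^{\rhoh}w$ with $w\in\CV_m^d(\varpi_{-s})$, obtained by matching the coefficients of $v$ in the basis $\{P^{2\rhoh-s,m-2\rhoh}_{i,\ell}\}$ against the corresponding $Q^{-s,m}_{i+\rhoh,\ell}$; by item 1 of Lemma \ref{lm:defQ} this $w$ moreover satisfies $\Delta^{k}w\big|_{\sph}=0$ for $k=0,\dots,\rhoh-1$.

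The crux is that such a $w$ is dual to $v$ in the sense that, for every $g\in W_p^s(\ball)$,
$$
  \la g,\,w\ra_{-s} \;=\; \la \Delta^{\rhoh} g,\; v\ra_{2\rhoh-s},
$$
and this holds with \emph{no} integration by parts: since $\nabla^{s}=\nabla^{\,s-2\rhoh}\Delta^{\rhoh}$ by definition, inserting $\Delta^{\rhoh}w=v$ turns $\la\nabla^s g,\nabla^s w\ra_{\ball}$ into $\la\Delta^{\rhoh}g,v\ra_{\ball}$ when $s$ is even and into $\la\nabla\Delta^{\rhoh}g,\nabla v\ra_{\ball}$ when $s$ is odd; the spherical terms $\lambda_{k}\la\Delta^{k}g,\Delta^{k}w\ra_{\sph}$ with $k\le\rhoh-1$ drop out because $\Delta^{k}w$ vanishes on $\sph$, and when $s$ is odd the one remaining term $\lambda_{\rhoh}\la\Delta^{\rhoh}g,v\ra_{\sph}$ supplies exactly the boundary contribution of $\la\Delta^{\rhoh}g,v\ra_{-1}$. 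Applying this duality once with $g=f$ and once with $g=\proj_m^{-s}f$, and using that $f-\proj_m^{-s}f$ is $\la\cdot,\cdot\ra_{-s}$-orthogonal to $\CV_m^d(\varpi_{-s})$, hence to $w$, gives $\la\Delta^{\rhoh}\proj_m^{-s}f-\Delta^{\rhoh}f,\,v\ra_{2\rhoh-s}=0$ for all $v\in\CV_{m-2\rhoh}^d(\varpi_{2\rhoh-s})$. Together with (a) this says precisely that $\Delta^{\rhoh}\proj_m^{-s}f$ is the $\CV_{m-2\rhoh}^d(\varpi_{2\rhoh-s})$-component of $\Delta^{\rhoh}f$, i.e.\ $\Delta^{\rhoh}\proj_m^{-s}f=\proj_{m-2\rhoh}^{\,2\rhoh-s}\Delta^{\rhoh}f$; summing (resp.\ $\eta$-weighting) over the degrees then finishes \eqref{DeltaS}.

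The hard part here is bookkeeping, not a new difficulty: the genuine analytic content — the action of $\Delta^{\rhoh}$ on $Q^{-s,m}_{j,\ell}$ in the interior and on $\sph$, and the associated normalizing constants — is already packaged into Lemma \ref{lm:defQ}, so what remains is to carry the parity split $2\rhoh-s\in\{0,-1\}$ consistently, to treat the exceptional index $j=\rhoh$ that only occurs for odd $s$, and to verify the identification of the constant $\lambda_{\rhoh}$ in the target $\la\cdot,\cdot\ra_{-1}$ (ultimately harmless). The only other step, passing from the degreewise identity to the $S_{n,\eta}^{-s}$ form, is routine and goes exactly as in Theorem \ref{main-lemma}.
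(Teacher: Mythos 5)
Your proposal is correct, and it reduces the theorem to the same degreewise identity $\Delta^{\rhoh}\proj_m^{-s}f=\proj_{m-2\rhoh}^{2\rhoh-s}\Delta^{\rhoh}f$ that the paper proves, drawing on the same inputs: items 1--3 of Lemma \ref{lm:defQ}, the parity split $2\rhoh-s\in\{0,-1\}$, and the identification $\lambda_0=\lambda_{\rhoh}$ in the odd case (your remark that $\proj_m^{-1}$ does not depend on this constant is the same cancellation used for \eqref{DeltBndproj}, so this is harmless). The difference lies in the final identification. The paper computes each term $\widehat f^{-s,m}_{j,\ell}\,\Delta^{\rhoh}Q^{-s,m}_{j,\ell}$ explicitly and matches it against the corresponding term of $\proj^{2\rhoh-s}_{m-2\rhoh}\Delta^{\rhoh}f$ by comparing the normalization constants $h^{-s}_{j,m}$ of Theorem \ref{thm:Hjn-rho} with $h^{0}_{j-\rhoh,m-s}$ from \eqref{eq:Hjn-mu} (resp.\ $h^{-1}_{j-\rhoh,m-2\rhoh}$ from \eqref{orth:P-1}). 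You instead observe that $\Delta^{\rhoh}$ maps $\CV_m^d(\varpi_{-s})$ \emph{onto} $\CV^d_{m-2\rhoh}(\varpi_{2\rhoh-s})$ and satisfies the adjointness relation $\la g,w\ra_{-s}=\la\Delta^{\rhoh}g,\Delta^{\rhoh}w\ra_{2\rhoh-s}$ for $w$ in the span of $\{Q^{-s,m}_{j,\ell}:j\ge\rhoh\}$ (item 1 killing the boundary terms with $k\le\rhoh-1$), and then invoke uniqueness of the orthogonal projection. This spares you the constant bookkeeping --- the identity $h^{-s}_{j,m}=c^2\,h^{2\rhoh-s}_{j-\rhoh,m-2\rhoh}$ becomes the special case $g=w$ of your duality rather than something to verify --- at the cost of the surjectivity step (b), which is anyway immediate from the nonvanishing of the constants in items 2--3 for $m\ge s$. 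Both routes are sound; the passage from the degreewise identity to the $S_n^{-s}$ and $S_{n,\eta}^{-s}$ statements is then the same as in Theorem \ref{main-lemma}.
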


\begin{proof}
By the definition \eqref{eq:proj-s}, we recall that
 \begin{align*}
\proj_n^{-s} f(x) =  \sum_{0\le j\le n/2} \sum_{l=1}^{a_{n-2j}^d} \widehat{f}^{-s,n}_{j,\ell} Q^{-s,n}_{j,\ell}(x),
\qquad  \widehat{f}^{-s,n}_{j,\ell} = \frac{1}{h^{-s}_{j,n}} \la f, Q^{-s,n}_{j,\ell}\ra_{-s}.
\end{align*}
 
We first assume $s=2m$. By item 2 of  Lemma \ref{lm:defQ}, it follows that 
$$
  \Delta^m Q^{-s,n}_{j,\ell}(x) =2^s (n+\tfrac{d}{2}-s)_{s} P^{0,n-s}_{j-m,\ell}(x),
$$ 
which is equal to zero if $ j\le m-1$.  
By item 1 of Lemma \ref{lm:defQ}, $\Delta^k Q^{-s,n}_{j,\ell} (\xi) =0$  holds for $j\ge m$ and $0\le k \le m-1$,  it  then follows from \eqref{eq:ipd-s}, \eqref{eq:Hjn-rho} and \eqref{eq:Hjn-mu}
that 
$$
    \widehat{f}^{-s,n}_{j,\ell}\Delta^m Q^{-s,n}_{j,\ell}(x)
    = \frac{\Delta^m Q^{-s,n}_{j,\ell}(x)}{h^{-s}_{j,n}}  \la \Delta^m f, \Delta^m Q^{-s,n}_{j,\ell}\ra_{\ball}
    = \frac{ \la \Delta^m f, P^{0,n-s}_{j-m,\ell}\ra_{\ball} }{h^{0}_{j-m,n-s}} P^{0,n-s}_{j-m,\ell}(x),
$$
for any $j\ge m$.
Since $\{ P^{0,n-s}_{j-m,\ell}:  m\le j\le \tfrac{n}{2}, 1\le \ell\le a_{n-2j}^d \}$ is a mutually orthogonal basis
of $\CV^d_{n-s}(\varpi_{0})$, we conclude that 
\begin{align*}
    \Delta^{m}\proj_n^{-s} f(x)  = \proj_{n-s}^{0} \Delta^{m}f(x),
\end{align*}
which leads to \eqref{DeltaS} for even $s$.

Now we assume $s=2m+1$. The same argument of using Lemma \ref{lm:defQ}, \eqref{eq:Hjn-rho}  and  \eqref{orth:P-1}
shows that $ \Delta^{m}Q^{-s,n}_{j,\ell}(x)=0$ for $j\le m-1$ and 
\begin{align*}
    \widehat{f}^{-s,n}_{j,\ell} \Delta^{m}Q^{-s,n}_{j,\ell}(x) = &
    \frac{\Delta^m Q^{-s,n}_{j,\ell}(x)}{h^{-s}_{j,n}} \big[ \la \nabla \Delta^m f, \nabla \Delta^m Q^{-s,n}_{j,\ell}\ra_{\ball}  + \lambda_m  \la \Delta^m f, \Delta^m Q^{-s,n}_{j,\ell}\ra_{\sph}  \big] 
    \\
    =&
   \left. \frac{ \la \Delta^m f, P^{-1,n-2m}_{j-m,\ell}\ra_{-1} }{h^{-1}_{j-m,n-2m}}\right|_{\lambda_0=\lambda_m} P^{-1,n-2m}_{j-m,\ell}(x),
        \quad j\ge m.
\end{align*}
 This implies
\begin{align*}
\Delta^{m}\proj_n^{-s} f(x)  = \proj_{n-2m}^{-1} \Delta^{m}f(x)
\end{align*}
by Theorem \ref{thm:OPnabla}. Thus, \eqref{DeltaS} also holds for odd $s$.
\end{proof}

\medskip\noindent
{\it Proof of Theorem \ref{nthApprox}.}
If $s = 2m$, it follows from Theorem \ref{main-lemma3}, Theorem \ref{thm:near-best} and 
Corollary \ref{cor:Jackson} that 
\begin{align*}
 \|\nabla^{s} f - \nabla^{s} S_{n,\eta}^{-s} f\|_{p,\ball}  & = 
     \| \Delta^m  f  - S_{n-2m,\eta}^0(\Delta^m f)\|_{p,\ball}  \le c E_{n-s} (\Delta^m f)_{p,\ball} \\
& \le c n^{-r+s} \|\Delta^m f\|_{W_p^{r-s}(\ball)}
 %  c \sum_{|\a|=s} E_{n-s} (\partial^{\a} f)_{p,\ball}
     \le  c n^{-r+s}\sum_{|\a|=s} \|\partial^{\a} f\|_{W_p^{r-s}(\ball)},
\end{align*}
whereas, if $s=2m+1$, we obtain form Theorem \ref{main-lemma3}, Theorem \ref{1stApprox},
and Corollary \ref{cor:Jackson} that
\begin{align*}
    \|\nabla^{s} f - \nabla^{s} S_{n,\eta}^{-s} f\|_{p,\ball} & = 
            \| \nabla \Delta^{m-1}  f  - \nabla  S_{n-s+1,\eta}^{-1}(\Delta^{m-1} f)\|_{p,\ball}  
             \le c \sum_{i=1}^d E_{n-s} (\partial_i\Delta^{m-1} f)_{p,\ball}   \\
   & \le  c n^{-r+s} \sum_{i=1}^d \|\partial_i \Delta^m f\|_{W_p^{r-s}(\ball)}
   % \le c \sum_{|\a|=s} E_{n-s} (\partial^{\a} f)_{p,\ball} 
   \le  c n^{-r+s} \sum_{|\a|=s} \|\partial^{\a} f\|_{W_p^{r-s}(\ball)}.
\end{align*}
Moreover, for $0 \le k \le \rhow -1$, we obtain, by \eqref{DeltBndproj} and \eqref{eq:approxHarm},
\begin{align*}
  \|\Delta^{k} (f -    S_{n,\eta}^{-s}f) \|_{W^{s-2k-1/p}_p(\sph)} 
  &  =   \|  \Delta^{k}  f   - S^{\CH}_{n-2 k,\eta} (\Delta^{k} f)\|_{W^{s-2k-1/p}_p(\sph)}  
   \\
  & \le c n^{- r + s} \| \Delta^k  f \|_{W_p^{r-2k-1/p}(\sph)} .
\end{align*}

Putting these together and applying Lemma \ref{lem:EquivNorm} and \eqref{eq:EquivNorm1}, we obtain
\begin{align} \label{high-der2}
  \|f - S_{n,\eta}^{-s} f \|_{W^s_p(\ball)} &  \le c n^{-r+s} \left[ \|\nabla^s f \|_{W_p^{r-s}(\ball)} 
     + \sum_{k=0}^{\rhoh -1} \| \Delta^k  f \|_{W_p^{r-2k-1/p}(\sph)}\right] \\
      & \le c n^{-r+s} \|f\|_{W_p^r(\ball)},    \notag 
\end{align}  
which establishes \eqref{nth1}  for the case $k = s$. 

Next we consider the estimate \eqref{nth1} for $k = 0$. This requires the following formula: for $s = 1, 2,3,\ldots$
\begin{align}\label{high-Green}
(-1)^s \int_{\ball} & \nabla^s f(x) \nabla^s g(x) dx
      =   \sum_{j=0}^{\lfloor \f{s}2 \rfloor -1} \int_{\sph} \Delta^{s-j-1} f(\xi) \partial_\sn \Delta^j g(\xi) d\xi \\
     & -  \sum_{j=0}^{\lceil \f{s}2 \rceil -1}   \int_{\sph}  \partial_\sn \Delta^{s-j-1} f(\xi) \Delta^j g(\xi) d\xi
             + \int_{\ball}  \Delta^s f(x) \, g(x)dx, \notag
\end{align}
where $\partial_\sn =  \frac{d}{d\sn}$ denote the derivative in the radius direction. For $s =1$, the first
term in the right hand side is taken to be zero and \eqref{high-Green} is the classical Green's identity. 
For $s = 2, 3,\ldots$, we apply Green's identity repeatedly. 

We need the following auxiliary partial differential equations with boundary values, 
\begin{align}
\label{auxrho}
\begin{cases}
(-\Delta)^{s}u = v,  & \text{in } \ball,
\\
\Delta^{s-1-k}u =0, & \text{on } \sph, k=0,1,\dots,\rhoh-1,
\\
 \partial_\sn \Delta^{s-1-k} u - (-1)^s \Delta^{k} u =0,
  & \text{on } \sph, k=0,1,\dots,\rhow-1.
\end{cases}
\end{align}
Let $\la \cdot, \cdot \ra_{-s}$ be defined as in \eqref{eq:ipd-s} with all $\l_k = d$. Using 
\eqref{high-Green} with $f =u$ shows that 
\begin{align}
\label{varvarrho}
 \la u, g\ra_{-s} = 
\la \nabla^{s}u, \nabla^{s}g \ra_{\ball} +d \sum_{k=0}^{\rhow -1}  \la  \Delta^{k}u, \Delta^{k}g\ra_{\sph}  
       = \la v, g\ra_{\ball}, 
\end{align}
for $g\in W_q^{s}(\ball)$, where $\frac{1}{p}+\frac{1}{q}=1$ for $1<p<\infty$. If $v =0$, then \eqref{varvarrho} 
with $u = g$ shows that $\|u\|_{-s}:= \la u, u\ra_{-s}= 0$, which implies that $u \equiv 0$. This shows that the homogenous 
problem of \eqref{auxrho} has a unique solution $u=0$. Hence, by Theorem 5.4.4/2 and Theorem 5.5.1
in \cite{Triebel}, $\Delta^{s}$ is an isomorphic mapping from the space ${\CU}_q^{2s}(\ball)$ onto $L^q(\ball)$, 
where  
\begin{align*}
{\CU}_q^{2s}(\ball) :=
&\Big\{ u:\in {W}_q^{2s}(\ball):  \Delta^{s-1-k}u =0 \text{ on } \sph, k=0,1,\dots, \left \lfloor \f s 2 \right \rfloor -1,
\\
& \text{ and }   \partial_\sn \Delta^{s-1-k} u - (-1)^s \Delta^{k} u =0,
  \text{on } \sph, k=0,1,\dots, \left \lceil \f s 2 \right \rceil -1 \Big\}.
\end{align*} 

As in the proof of Theorem \ref{1stApprox}, we use duality arguments and define  
\begin{align*}
  g = f - S_{n,\eta}^{-s} f \quad \hbox{and}\quad  g^* = \begin{cases} |g|^{p-1}\mathrm{sign}(g), & g \neq 0,\\
    0, & g = 0.
  \end{cases}
\end{align*}
By the isomorphism property of  $\Delta^{s}$,  \eqref{auxrho} with $v = g^*$ admits a unique solution that satisifes
$\|u \|_{W_q^{2s}(\ball)} \le  \|g^*\|_{q,\ball}$  \cite[Theorem 5.5.1]{Triebel}. 

Since $S_{n,\eta}^{-s}$ reproduces polynomials of degree $n$, it follows that 
$$
  \big  \langle g, S_{\lfloor \frac{n}{2}\rfloor ,\eta}^{-s} u \big  \rangle_{-s} =  \big  \langle  f - S_{n,\eta}^{-s} u, 
       S_{\lfloor \frac{n}{2}\rfloor,\eta}^{-s}u  \big  \rangle_{-s} =0. 
$$
As a result, we derive from \eqref{varvarrho} with $v = g^*$, \eqref{eq:Holder} and \eqref{high-der2} with 
$r = s$ that 
\begin{align*}
\la g^*, g\ra_{\ball}  = & \la u, g \ra_{-s} = \big  \langle u- S_{\lceil \frac{n}{2} \rceil,\eta}^{-s} u, g \big  \rangle_{-s} \\
\le & \| u-S_{\lceil \frac{n}{2} \rceil,\eta}^{-s} u \|_{W^{s}_q(\ball)}
     \| g \|_{W^{s}_q(\ball)} \le   c n^{-s} \| u\|_{W^{2s}_q(\ball)} \|g\|_{W^{s}_q(\ball)}.
\end{align*}
We then apply $\|u\|_{W^{2s}_q(\ball)} \le c \|g^*\|_q$ and obtain, using  \eqref{high-der2} again, that 
\begin{align*}
  \|g\|_{p,\ball}  =  \frac{ \la g^*, g\ra_{\ball}} {\|g^*\|_{q,\ball}}  \le c n^{-s} \|g\|_{W^{s}_q(\ball)}  \le c n^{-r} \|f\|_{W_p^{r}}.
\end{align*}
By the definition of $g$, this establishes \eqref{nth1} for $k =0$. Finally, the case $0 < k < s$ of
\eqref{nth1} follows from Lemma \ref{lem:interD}. 
\qed 
  
\subsection{Proof of Theorem \ref{nthApprox=Wcirc}}

If $f \in  \accentset{\circ}{W}(\ball)$ or if $f$ is a radial function, then $\la f, g\ra_{-s} = \la \nabla^s f, \nabla^s g\ra_{\ball}$.
As a result, by Corollary \ref{cor:Jackson}, \eqref{high-der2} can be replaced by 
\begin{align*}
    \|f-S_{n,\eta}^{-s} f \|_{W^s_p(\ball)}   \le c n^{-r+s} \sum_{|\a| =s} |\partial^\a f |_{W_p^{r-s}(\ball)}^\circ. 
\end{align*}
The proof of the first inequality of \eqref{nth2} now follows exactly the proof of Theorem \ref{nthApprox}. The
second inequality of \eqref{nth2} follows from
$$
  |\partial^\a f |_{W_p^{r-s}(\ball)}^\circ \le c \|\partial^\a f \|_{W_p^{r-s}(\ball)}
      \le c \|f\|_{W_p^r(\ball)},
$$
which completes the proof. \qed
 
\section{Applications and numerical examples}
\setcounter{equation}{0}

To illustrate our results, we consider numerical solutions of two elliptic equations of the second 
and fourth order, respectively, on the unit ball, and we choose the spectral-Galerkin method 
using orthogonal polynomials on the ball. We will carry out a convergence analyses of the approximation
scheme in the Hilbert space and present numerical examples that illustrate our theorems.

\subsection{Second order equation}
We consider the non-homogenous boundary problem of the Helmholtz equation on the unit ball,
\begin{align}
\label{Helmholtz}
-\Delta u +\lambda u= f \text{ in } \ball, \qquad   \partial_{\sn} u+\eta u = g \text{ on } \sph,
\end{align} 
where the  constant $\lambda \ge 0, \eta\ge 0$ and  $\lambda+\eta >0$. Let 
$$
  \CA_1(u,v):=  \big[ \la \nabla u, \nabla v\ra_{\ball}  +d\eta  \la u, v\ra_{\sph}\big]+ \lambda \la u,v\ra_{\ball} 
     =  \la u, v\ra_{-1}+ \lambda \la u,v\ra_{\ball}
$$ 
In the variational formulation, solving \eqref{Helmholtz} is equivalent 
to find  $u\in W^1_2(\ball)$ such that
\begin{align}
\label{variational1}
  \CA_1(u,v) = & \la f, v\ra_{\ball} + d\la g, v\ra_{\sph},  \  v\in {W}^1_2(\ball),  
\end{align}
which, by the Lax-Milgram lemma \cite{Lax}, admits a unique solution that satisfies
\begin{align}
\label{stability1}
\|\nabla u\|^2_{2,\ball} + \lambda\|u\|_{2,\ball}^2 + d\eta \|u\|_{2,\sph}^2 \le  c( \|f\|^2_{2,\ball} +d \|g\|_{2, \sph}^2).
\end{align}

The spectral-Galerkin approximation to \eqref{Helmholtz} amounts to find $u_n\in \Pi_n^d$ such that
\begin{align}
\label{GSM2nd}
  \CA_1(u_n,v)  = \la f, v\ra_{\ball} +d  \la g, v \ra_{\sph}, \qquad v\in \Pi_n^d,
\end{align}
which has a unique solution that satisfies \eqref{stability1} with $u_n$ in place of  $u$.

By Theorem \ref{thm:OPnabla}, the orthogonal expansions of $u_n \in \Pi_n^d$ can be written as
$$
      u_n=\sum_{k =0}^n \sum_{0\le j \le n/2} \sum_{\ell=1}^{a^d_{n-2j}} \wh{u}^{k}_{j,\ell} P^{-1,k}_{j,\ell}.
$$
Substituting this expression into \eqref{GSM2nd} and setting $v = P^{-1,k}_{j,\ell}$, we obtain a 
linear system of equations on $\{\wh{u}^{k}_{j,\ell}\}$. The matrix $[\CA_1(P^{-1,k}_{j,\ell},P^{-1,k'}_{j',\ell'})]$
contains two parts. The first part, called stiff matrix, has been computed in \eqref{orth:P-1}. To evaluate the 
second part, called the mass matrix, we use the definition of $P^{-1,k}_{j,\ell}$ given in \eqref{GbaseP}. By 
\eqref{JacobiP1} and \eqref{GbaseP}, 
\iffalse
\begin{align*}
(j+&\b+1)_j \wh{P}^{(-1,\b)}_j(x)\overset{\eqref{JacobiP1}} =\frac{2j+\b}{j+\b} \sum_{k=0}^{j} \frac{(j+\b)_{k} (k)_{j-k}}{(j-k)! k!}\Big(\frac{t-1}{2}\Big)^k
\\
&= \sum_{k=0}^{j} \Big[ \frac{(k+1)_{j-k} (j+\b+1)_k}{(j-k)! k! }  - \frac{(k+1)_{j-1-k} (j+\b)_k}{(j-k-1)! k! }   \Big] \Big(\frac{t-1}{2}\Big)^k 
\\
&\overset{\eqref{JacobiP1}}= (j+\b+1)_j \wh{P}^{(0,\b)}_j(x) - (j+\b)_{j-1} \wh{P}^{(0,\b)}_{j-1}(x),
\end{align*}
which implies 
\fi
it is not difficult to check that
\begin{align*}
   P^{-1,n}_{0,\ell}  =Y^n_{\ell} \quad\hbox{and}\quad P^{-1,n}_{j,\ell} = P^{0,n}_{j,\ell} - P^{0,n-2}_{j-1,\ell}, \quad j\ge1.
\end{align*}
Hence, using \eqref{eq:Hjn-mu}, we obtain that 
\begin{align*}
\la P^{-1,n}_{j,\ell}, P^{-1,n'}_{j',\ell'} \ra_{\ball}
= \begin{cases}
\frac{d}{2n+d} (2-\delta_{j,0}), &   n=n',     j=j', \ell=\ell', \\ %\delta_{n,m} \delta_{j,k} \delta_{\ell,\iota}
-\frac{d}{2n'+d} , &   n=n'+2, j = j' + 1, \ell=\ell', \\ %\delta_{n,m} \delta_{j,k} \delta_{\ell,\iota}
-\frac{d}{2n+d} , &   n'=n+2, j' = j + 1, \ell=\ell', \\ %\delta_{n,m} \delta_{j,k} \delta_{\ell,\iota}
0, & \text{otherwise}.
\end{cases}
\end{align*}
Thus, the stiff matrix is diagonal and the mass matrix is tridiagonal when the coefficients are arranged
appropriately.  

The convergence of this approximation scheme is given in the following theorem. 

\begin{thm}
Let $u$ and $u_n$ be the solutions of \eqref{Helmholtz} and \eqref{GSM2nd}, respectively. If 
$u\in W_2^{s}(\ball)$ with $s\ge 1$, then
\begin{align*}
   \|u-u_n\|_{W_2^k(\ball)} \le cn^{-s+k} \|u\|_{W_2^s(\ball)}, \quad 0\le k \le 1\le s.
\end{align*} 
\end{thm}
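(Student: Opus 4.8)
The plan is to combine the standard Galerkin theory with the approximation estimates of Theorem~\ref{1stApprox}. First I would check that the bilinear form $\CA_1$ is bounded and coercive on $W_2^1(\ball)$. Boundedness is immediate from the Cauchy--Schwarz inequality together with the trace inequality \eqref{eq:imbedding}. For coercivity, $\CA_1(v,v) = \la v,v\ra_{-1} + \lambda\|v\|_{2,\ball}^2 = \|\nabla v\|_{2,\ball}^2 + d\eta\|v\|_{2,\sph}^2 + \lambda\|v\|_{2,\ball}^2$, and one uses the norm equivalences of Lemma~\ref{lem:EquivNorm}: when $\lambda>0$ this already dominates $\|v\|_{W_2^1(\ball)}^2$, and when $\lambda=0$ (so $\eta>0$) one invokes the Poincar\'e-type inequality $\|v\|_{2,\ball}^2\le c(\|\nabla v\|_{2,\ball}^2+\|v\|_{2,\sph}^2)$ from the same circle of norm equivalences. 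Hence by the Lax--Milgram lemma \cite{Lax} and the resulting quasi-optimality (C\'ea's lemma),
$$
  \|u-u_n\|_{W_2^1(\ball)} \le c\inf_{v\in\Pi_n^d}\|u-v\|_{W_2^1(\ball)} \le c\,\|u-S_{n,\eta}^{-1}u\|_{W_2^1(\ball)},
$$
and since we are in the Hilbert-space case $p=2$, $S_{n,\eta}^{-1}$ may be replaced by $S_{n}^{-1}$. For $k=1$ the claim then follows at once from Theorem~\ref{1stApprox} applied with $r=s$ and $p=2$, which gives $\|u-S_{n,\eta}^{-1}u\|_{W_2^1(\ball)}\le cn^{-s+1}\|u\|_{W_2^s(\ball)}$.

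For $k=0$ I would run the Aubin--Nitsche duality argument, exactly as in the proof of Theorem~\ref{1stApprox}. Set $e:=u-u_n$ and let $w\in W_2^2(\ball)$ solve the (self-adjoint) auxiliary problem $-\Delta w+\lambda w=e$ in $\ball$, $\partial_\sn w+\eta w=0$ on $\sph$, whose variational form is $\CA_1(w,v)=\la e,v\ra_{\ball}$ for all $v\in W_2^1(\ball)$, with the regularity bound $\|w\|_{W_2^2(\ball)}\le c\|e\|_{2,\ball}$ by \cite[Theorem~5.5.2, pp.\,390--391]{Triebel}. Taking $v=e$ and using Galerkin orthogonality $\CA_1(e,v)=0$ for $v\in\Pi_n^d$, together with symmetry of $\CA_1$, gives
$$
 \|e\|_{2,\ball}^2 = \CA_1(e,w) = \CA_1\big(e,\,w-S_{\lfloor n/2\rfloor,\eta}^{-1}w\big)
  \le c\,\|e\|_{W_2^1(\ball)}\,\big\|w-S_{\lfloor n/2\rfloor,\eta}^{-1}w\big\|_{W_2^1(\ball)}.
$$
Applying Theorem~\ref{1stApprox} with $r=2$ to the last factor, then the regularity bound, yields $\|e\|_{2,\ball}^2\le cn^{-1}\|e\|_{W_2^1(\ball)}\|e\|_{2,\ball}$, hence $\|e\|_{2,\ball}\le cn^{-1}\|e\|_{W_2^1(\ball)}$, and the already-established $k=1$ estimate turns this into $\|e\|_{2,\ball}\le cn^{-s}\|u\|_{W_2^s(\ball)}$. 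This completes the proof.

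I expect the main obstacle to be the coercivity step in the borderline case $\lambda=0$, where control of $\|v\|_{2,\ball}$ must be recovered from $\|\nabla v\|_{2,\ball}^2+d\eta\|v\|_{2,\sph}^2$; this relies on the boundary-term Poincar\'e inequality embedded in the norm equivalences of Appendix~B and on the standing hypothesis $\lambda+\eta>0$. Everything else is a direct application of the approximation machinery (Theorem~\ref{1stApprox}, Theorem~\ref{thm:near-best}) and of classical elliptic regularity, and the duality step is a verbatim repetition of the argument already carried out in Theorem~\ref{1stApprox}.
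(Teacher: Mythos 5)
Your proof is correct. For $k=0$ it is essentially identical to what the paper does (the paper invokes ``a standard Aubin--Nitsche argument'' and omits the details; you have supplied them correctly). For $k=1$ your route is mildly different: you establish coercivity of $\CA_1$ on $W_2^1(\ball)$ (using Lemma~\ref{EquivNorm1} with $\CF(f)=\|f\|_{2,\sph}$ in the borderline case $\lambda=0$, $\eta>0$ --- note this is the relevant lemma rather than Lemma~\ref{lem:EquivNorm}, whose boundary term carries the stronger $W_p^{1-1/p}(\sph)$ norm) and then apply C\'ea's lemma, reducing everything to the approximation bound of Theorem~\ref{1stApprox}. The paper instead compares $u_n$ directly with the Sobolev-orthogonal projection $S_n^{-1}u$, exploiting the identity $\CA_1(\cdot,\cdot)=\la\cdot,\cdot\ra_{-1}+\lambda\la\cdot,\cdot\ra_{\ball}$ together with \eqref{eq:Sn-1*} to get $\CA_1(u_n-S_n^{-1}u,v)=\lambda\la u-S_n^{-1}u,v\ra_{\ball}$, and then absorbs. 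The two arguments need the same coercivity ingredient in the end (the paper uses Lemma~\ref{EquivNorm1} to pass from $\|\nabla v\|_{2,\ball}+\sqrt{d\eta}\|v\|_{2,\sph}+\sqrt{\lambda}\|v\|_{2,\ball}$ back to $\|v\|_{W_2^1(\ball)}$), but the paper's version yields the slightly sharper byproduct $\|u_n-S_n^{-1}u\|_{W_2^1(\ball)}\le c\sqrt{\lambda}\,\|u-S_n^{-1}u\|_{2,\ball}=O(n^{-s})$ --- in particular $u_n=S_n^{-1}u$ when $\lambda=0$ --- whereas C\'ea's lemma only gives quasi-optimality in the $W_2^1$ norm, which is $O(n^{-s+1})$. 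For the theorem as stated both suffice, so your argument is a perfectly valid, if marginally less informative, alternative.
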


\begin{proof} 
Recall that $S_n^{-1}$ denote the $n$th partial sum of orthogonal 
expansion with respect to $\la \cdot, \cdot \ra_{-1}$. By \eqref{eq:Sn-1*}, \eqref{variational1} and \eqref{GSM2nd},
\begin{align*}
\CA_1(u_n - S_n^{-1} u, v) = &   \la u_n - S_n^{-1} u, v\ra_{-1} +\lambda \la u_n - S_n^{-1} u, v\ra_{\ball}
\\
= & \la u_n - u, v\ra_{-1} +\lambda \la u_n -  u, v\ra_{\ball}  + \lambda \la u- S_n^{-1}u, v\ra_{\ball}  \\
= & \lambda \la u- S_n^{-1}u, v\ra_{\ball},  \quad  v\in \Pi_n^d.
\end{align*}
Taking $v=u_n - S_n^{-1} u \in \Pi_n^d$, we obtain 
\begin{align*}
 \CA_1 (u_n-S_n^{-1}u,u_n-S_n^{-1}u)
& \le \lambda \| u-S_n^{-1}u\|_{2,\ball}  \| u_n-S_n^{-1}u\|_{2,\ball}
 \\
 &\le \frac12\lambda  \| u-S_n^{-1}u\|_{2,\ball}^2 + \frac12 \CA_1(u_n-S_n^{-1}u,u_n-S_n^{-1}u),
\end{align*}
which implies that
\begin{align*}
  \| \nabla (u_n-S_n^{-1}u)\|_{2,\ball}  +\sqrt{ d\eta} \| u_n-S_n^{-1}u\|_{2,\sph} +  \sqrt{\lambda} \|u_n-S_n^{-1}u\|_{2,\ball}  
  \le \sqrt{3 \lambda}  \| u-S_n^{-1}u\|_{2,\ball}.
\end{align*}
Thus, by Lemma \ref{EquivNorm1} and \eqref{1stApprox1}, for $s = 1,2,\ldots$, 
\begin{align*}
&  \| u_n-u\|_{W^1_2(\ball)} \le c \|u-S_n^{-1}u\|_{W^1_2(\ball)}       \le c n^{-s+1} \|u\|_{W_p^s}. 
\end{align*}
Furthermore, by a standard  Aubin-Nitsche argument, we also have 
\begin{align*}
  \| u_n-u\|_{2,\ball} \le c n^{-1}  \| u_n-u\|_{W^1_2(\ball)},
\end{align*}
where we omit the details. Together, the last two displayed inequalities complete the proof. 
\end{proof}

\subsection{Fourth order equation}
We consider the following fourth order elliptic equation on the unit ball,
\begin{align}
\label{Biharmonic}
   &\Delta^2 u - \lambda_1 \Delta u + \lambda_0 u = f, \quad  \text{in } \ball,
   \qquad 
   u = \partial_\sn u = 0, \quad  \text {on } \partial\ball. 
\end{align}
where the constants $\lambda_1,\lambda_0 \ge 0$ and, for simplicity, we consider homogeneous 
boundary. In the variational formulation, solving \eqref{Biharmonic} is equivalent to find 
$u\in \accentset{\circ}{W}^2_2(\ball)$ such that 
\begin{align}
\label{variational2}
& \CA_2(u,v):=\la \Delta u, \Delta v\ra_{\ball} + \lambda_1 \la \nabla u, \nabla v \ra_{\ball}  + \lambda_0 \la u, v \ra_{\ball} = \la f, v\ra_{\ball},\quad v\in \accentset{\circ}{W}^2_2(\ball).
\end{align}
Let the approximation space be $\accentset{\circ}{\Pi}_n^d:={\Pi}_n^d\cap \accentset{\circ}{W}^2_2(\ball)$. The 
spectral Galerkin approximation scheme for \eqref{Biharmonic} amounts to find $u_n\in \accentset{\circ}{\Pi}_n^d$ 
such that
\begin{align}
\label{GSM4th}
&\CA_2(u_n,v)=\la f, v\ra_{\ball},\qquad v\in \accentset{\circ}{\Pi}_n^d,
\end{align}
which has a unique and stable solution by the Lax-Milgram lemma \cite{Lax}. 

The convergence of this approximation scheme is given in the following theorem.  

\begin{thm} \label{ConvBi}
Let $u$ and $u_n$ be the solutions of \eqref{Biharmonic} and \eqref{GSM4th}, respectively. If 
$u\in W_2^{s}(\ball)$ with $s \ge 2$, then
\begin{align} \label{estimate-PDE4}
   \|u-u_n\|_{W_2^{k}(\ball)} \le cn^{-s+k} \sum_{|\a| = 2} \|\partial^\a u\|_{W_2^{s-2}(\ball)}, \quad 0\le k\le 2\le s.
\end{align} 
\end{thm}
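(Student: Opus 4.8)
The plan is to combine the standard Galerkin‑orthogonality (C\'ea) estimate with the approximation result of Theorem~\ref{nthApprox=Wcirc} to obtain the energy‑norm ($k=2$) bound, and then to descend to the lower‑order norms by an Aubin--Nitsche duality argument together with an intermediate‑derivative inequality, in the spirit of the proof of Theorem~\ref{nthApprox}.

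First I would record that $\CA_2$ is bounded and coercive on $\accentset{\circ}{W}^2_2(\ball)$: boundedness is the Cauchy--Schwarz inequality, while coercivity follows from $\CA_2(v,v)\ge \|\Delta v\|_{2,\ball}^2$ together with the norm equivalence \eqref{eq:norm-zeroboundary}, which for $s=2$ reads $\|\Delta v\|_{2,\ball}\sim \|v\|_{W^2_2(\ball)}$ on $\accentset{\circ}{W}^2_2(\ball)$. Subtracting \eqref{GSM4th} from \eqref{variational2} gives the Galerkin orthogonality $\CA_2(u-u_n,v)=0$ for all $v\in \accentset{\circ}{\Pi}_n^d$. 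Since $u\in\accentset{\circ}{W}^2_2(\ball)$, Lemma~\ref{lem:proj-f=0g} shows that $S_n^{-2}u$ vanishes to second order on $\sph$, so $S_n^{-2}u\in\accentset{\circ}{\Pi}_n^d$ is an admissible trial element. Choosing $v=u_n-S_n^{-2}u$ in the Galerkin orthogonality and invoking the coercivity and boundedness of $\CA_2$ yields, in the usual way, $\|u-u_n\|_{W^2_2(\ball)}\le c\,\|u-S_n^{-2}u\|_{W^2_2(\ball)}$; Theorem~\ref{nthApprox=Wcirc} with $p=2$, $s=2$, $r=s$ and $k=2$ then produces \eqref{estimate-PDE4} for $k=2$.

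For $k=0$ I would run the Aubin--Nitsche argument as in the proof of Theorem~\ref{nthApprox}. Put $g:=u-u_n\in\accentset{\circ}{W}^2_2(\ball)$ and let $w$ solve the self‑adjoint problem $\Delta^2 w-\lambda_1\Delta w+\lambda_0 w=g$ in $\ball$ with $w=\partial_\sn w=0$ on $\sph$; by the regularity theory for this fourth‑order elliptic boundary value problem (\cite[Theorem 5.5.2]{Triebel}) it has a unique solution $w\in\accentset{\circ}{W}^2_2(\ball)\cap W^4_2(\ball)$ with $\|w\|_{W^4_2(\ball)}\le c\,\|g\|_{2,\ball}$. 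Using $\CA_2(w,g)=\la g,g\ra_{\ball}=\|g\|_{2,\ball}^2$, the Galerkin orthogonality against the admissible element $S_n^{-2}w\in\accentset{\circ}{\Pi}_n^d$, the boundedness of $\CA_2$, and Theorem~\ref{nthApprox=Wcirc} (now with $r=4$, $s=2$, $k=2$) in the form $\|w-S_n^{-2}w\|_{W^2_2(\ball)}\le c\,n^{-2}\|w\|_{W^4_2(\ball)}$, one obtains $\|g\|_{2,\ball}^2=\CA_2(w-S_n^{-2}w,\,g)\le c\,n^{-2}\|g\|_{2,\ball}\,\|g\|_{W^2_2(\ball)}$, hence $\|u-u_n\|_{2,\ball}\le c\,n^{-2}\|u-u_n\|_{W^2_2(\ball)}\le c\,n^{-s}\sum_{|\a|=2}\|\partial^\a u\|_{W^{s-2}_2(\ball)}$. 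The remaining case $k=1$ then follows by interpolating between the $k=0$ and $k=2$ estimates, or equivalently from the intermediate‑derivative inequality of Lemma~\ref{lem:interD}.

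The only non‑routine point is making sure that the discrete space $\accentset{\circ}{\Pi}_n^d$ contains a polynomial of near‑best accuracy that still satisfies the clamped boundary conditions $v=\partial_\sn v=0$ exactly; this is exactly what Lemma~\ref{lem:proj-f=0g} supplies, since $S_n^{-2}$ maps functions in $\accentset{\circ}{W}^2_2(\ball)$ to polynomials of the form $(1-\|x\|^2)^2(\,\cdot\,)$, whose approximation error is controlled by Theorem~\ref{nthApprox=Wcirc}. Everything else — coercivity via \eqref{eq:norm-zeroboundary}, the $W^4_2$‑regularity of the biharmonic‑type dual problem, and the interpolation step — is routine.
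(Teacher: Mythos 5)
Your argument is correct and follows essentially the same route as the paper: Galerkin orthogonality with $S_n^{-2}u\in\accentset{\circ}{\Pi}_n^d$ (justified via Lemma~\ref{lem:proj-f=0g}) as the comparison element, quasi-optimality in $W_2^2(\ball)$ via coercivity of $\CA_2$ on $\accentset{\circ}{W}^2_2(\ball)$, Theorem~\ref{nthApprox=Wcirc} for the $k=2$ bound, and Aubin--Nitsche plus the intermediate-derivative inequality for $k=0,1$. The only cosmetic difference is that the paper derives quasi-optimality by exploiting the $\la\cdot,\cdot\ra_{-2}$-orthogonality of $u-S_n^{-2}u$ directly in the error equation rather than invoking generic C\'ea, and it omits the duality details that you correctly supply.
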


\begin{proof}
From \eqref{variational2} and \eqref{GSM4th}, it follows that $\CA_2(u-u_n, v)=0$. Since 
$S_n^{-2}u\in  \accentset{\circ}{\Pi}_n^d$,  by \eqref{eq:Sn-1*}, we obtain 
\begin{align*}
\CA_2(S_n^{-2}u-u_n, v) &  = \CA_2(S_n^{-2}u-u_n, v)    
\\
& = \la S_n^{-2}u-u, v\ra_{-2}
+ \lambda_1 \la \nabla (S_n^{-2}u-u), \nabla v \ra_{\ball}  + \lambda_0 \la S_n^{-2}u-u, v \ra_{\ball}  
\\
& =  \lambda_1 \la \nabla (S_n^{-2}u-u), \nabla v \ra_{\ball}
+  \lambda_0 \la S_n^{-2}u-u, v \ra_{\ball}.
\end{align*}
Taking $v=S_n^{-2}u-u_n \in  \accentset{\circ}{\Pi}_n^d$ in \eqref{variational2}, the above inequality shows
that 
\begin{align*}
  \|\Delta (u_n-S_n^{-2}u)\|^2_{2,\ball} + & \lambda_1 \|\nabla (u_n-S_n^{-2}u)\|^2_{2,\ball}
  +  \lambda_0 \| u_n-S_n^{-2}u\|^2_{2,\ball}
  \\
\le& \lambda_1 \|\nabla (u-S_n^{-2}u)\|^2_{2,\ball} + \lambda_0 \| u_n-S_n^{-2}u\|^2_{2,\ball},
\end{align*}
which leads to, by  \eqref{eq:EquivNorm2}, 
\begin{align*}
\|u_n-u\|_{W_2^2(\ball)} & \le \|u-S^{-2}_nu\|_{W_2^2(\ball)}+  \|u_n-S^{-2}_nu\|_{W_2^2(\ball)}
\\
  &  \le  \|u-S^{-2}_nu\|_{W_2^2(\ball)}+ c \|\Delta(u_n-S^{-2}_nu)\|_{2,\ball} \le 
    c \|u-S^{-2}_nu\|_{W_2^2(\ball)}.
\end{align*}
Consequently, the estimate for $k=2$ of \eqref{estimate-PDE4} follows from \eqref{nth2} in Theorem \ref{nthApprox=Wcirc}. 

A standard dual argument can then be used to derive the error estimate in the case of $k =0$ an $k=1$
of \eqref{estimate-PDE4}, we omit details. 
\end{proof}

\subsection{Numerical results}

We consider examples for the equations \eqref{Helmholtz} and \eqref{Biharmonic}. 

\begin{exam} \label{exam1}
For the Helmholtz equation  \eqref{Helmholtz}, we give numerical results in two and three dimensions with 
$\lambda=1$ in the following settings:
\begin{itemize}
\item[(a)] $d=2$,  $f(x)=x_1(11-x_1^2-x_2^2)$ and $g(\xi)=2\eta \xi_1$
such that  $u(x)=3x_1-(x_1^2+y_1^2)x_1$;
\vspace*{0.2em}
\item[(b)] $d=3$,  $f(x)=\dfrac{4-x_1^2-x_2^2}{4+x_1^2+x_2^2 -4 x_1}$ and 
$g(\xi)=\dfrac{4(1-\xi_3^2)(\xi_1-4) + 16 \xi_1  }{(5-\xi_3^2-4 \xi_1)^2} + \dfrac{\eta (3+\xi_3^2)}{5-\xi_3^2 -4 \xi_1}$
%$g(\xi)=\dfrac{4(\xi_1^2+\xi_2^2)(\xi_1-4) + 16 \xi_1  }{(4+\xi_1^2+\xi_2^2 -4 \xi_1)^2} + \dfrac{\eta (4-\xi_1^2-\xi_2^2)}{4+\xi_1^2+\xi_2^2 -4 \xi_1}$
such that $u=f$.
\end{itemize}
\end{exam}

Let $\{ t_{i}^{(\beta)}, \omega^{(\beta)}_i \}_{i=0}^{n}$ be the zeros and the corresponding Christoffel numbers
of the Jacobi polynomials  $P^{(0,\beta)}_{n+1}(t)$.
Set $ \rho_i = \sqrt{(t_i^{(d/2-1)}+1)/2}$, $\theta_i = \arccos t^{(0)}_i$  for $ i=0,\dots,n$
and  $\phi_j = \frac{2 j \pi}{2n+1}$ for $j=0,1,\dots,2n$. 
We report  the discrete maximum 
error $e_M(u-u_n)$ and the discrete $L^2$--error $e_{L^2}(u-u_n)$, defined by
\begin{align*}
   &e_M(f) = \max_{0\le k_1, k_2/2 \le n} \big|f(x_{k})\big|, 
  \quad  e_{L^2}(f) = \sum_{0\le k_1,k_2/2\le n}  \big|f(x_{k})\big|^2 \frac{\pi\,  \omega^{(0)}_{k_1} }{2(2n+1)}, 
\end{align*}
with the measuring points  $x_{k} = \big(\rho_{k_1} \cos (\phi_{k_2}), \rho_{k_1} \sin (\phi_{k_2})\big)$ in two dimensions
and by 
\begin{align*}
   & e_M(f) = \max_{0\le k_1,k_2, k_3/2 \le n} \big|f(x_{k})\big|, 
  \quad  e_{L^2}(f) = \sum_{0\le k_1,k_2,k_3/2\le n}    \big|f(x_{k})\big|^2   \frac{\pi\, \omega^{(1/2)}_{k_1}       \omega^{(0)_{k_2}}} {2\sqrt{2}(2n+1)},
\end{align*}
with  $x_{k} = \big(\rho_{k_1} \cos (\phi_{k_3}), \rho_{k_1}  \sin(\theta_{k_2}) \cos (\phi_{k_3})  , \rho_{k_1}  \sin(\theta_{k_2}) \sin (\phi_{k_3}) \big)$ in three dimensions.

Theoretically, the spectral-Galerkin approximation \eqref{GSM2nd} with any $n\ge 5$ recover  the exact solution of Example \ref{exam1} (a).
Figure \ref{fig1}  shows the maximum and the $L^2$--errors between the exact solution and the approximation solution
of \eqref{GSM2nd}. It is easy to see from Figure \ref{fig1} (a) that all the errors plotted  are close to the machine precision, and an exponential oder of convergence is found in Figure  \ref{fig1} (b).
These conclusions match our theoretical results. 

\begin{figure}[!ht]
\centering
\hfill\includegraphics[width=0.45\textwidth]{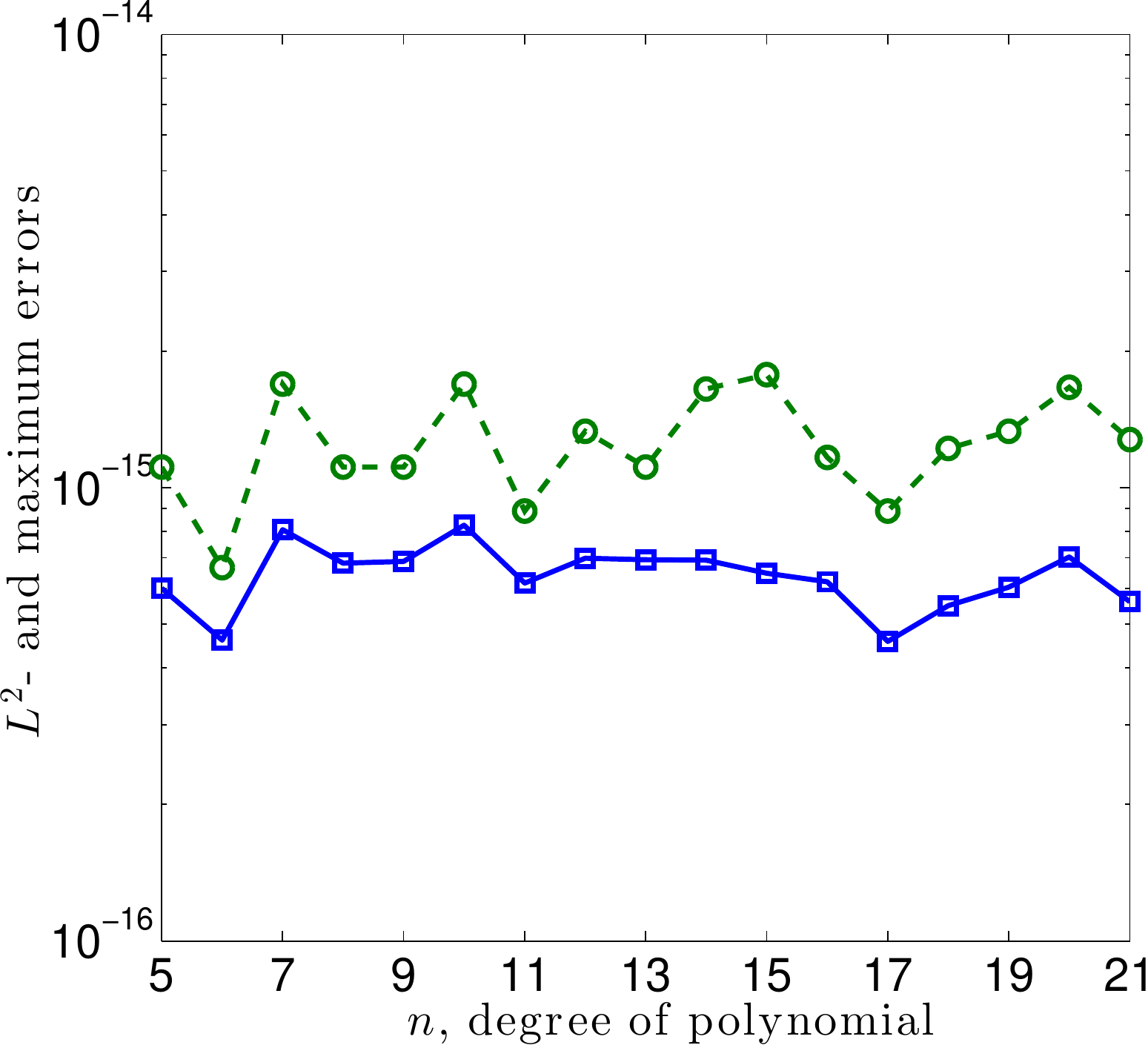}\hfill%
\hfill\includegraphics[width=0.45\textwidth]{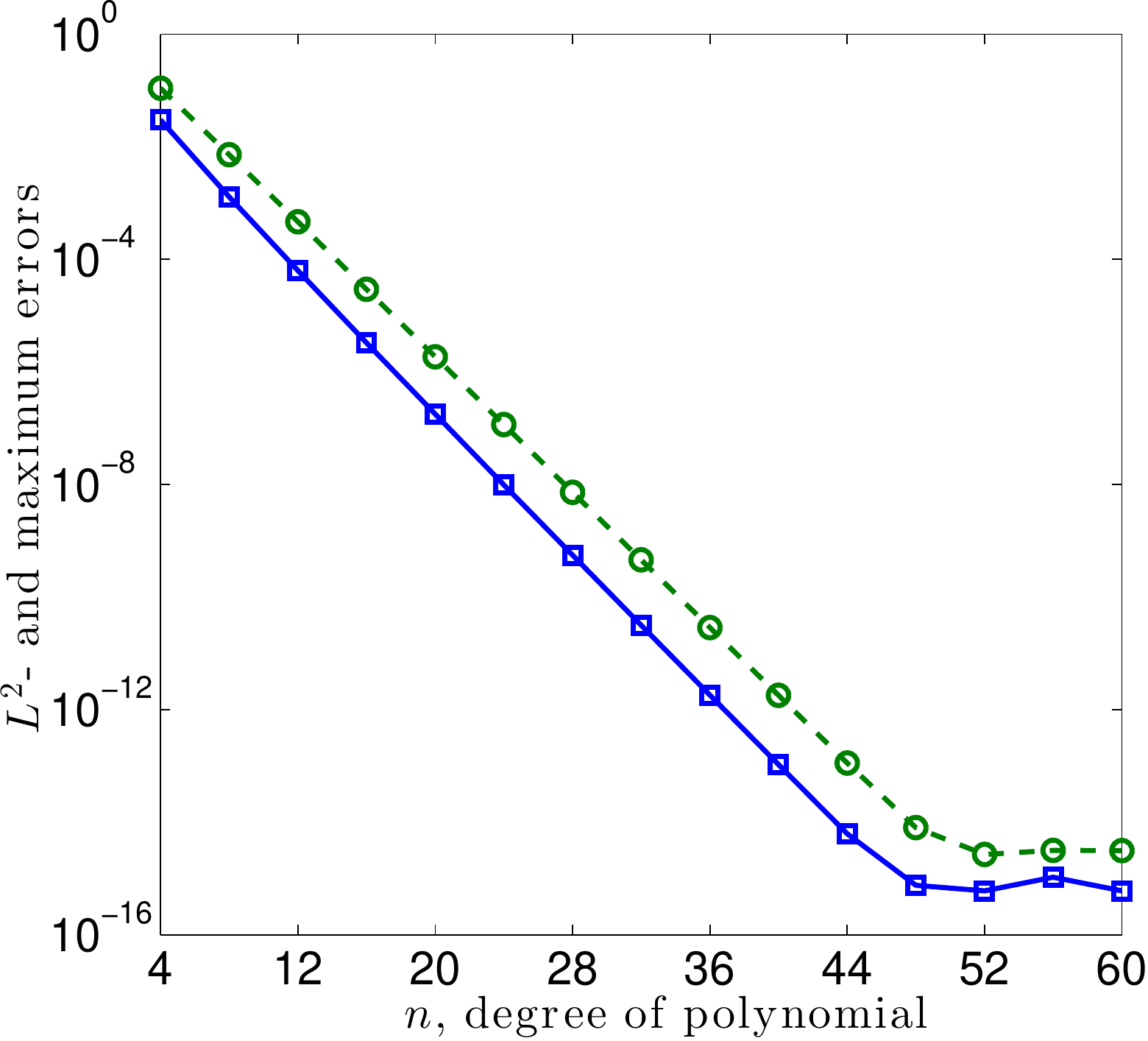}\hspace*{\fill}

\hfill(a)\hfill\hfill(b)\hspace*{\fill}

\caption{$L^2$-  (solid) and  maximum  (dashed) errors of Example \ref{exam1}.  Left:  $d=2$, $\eta=0$;  right: $d=3$, $\eta=1$. }\label{fig1}
\end{figure}

\

\iffalse
\begin{exam}
\begin{align*}
u =& \frac{R^2-\rho^2}{R^2+\rho^2 -2 R  \rho \cos(\theta)} =   \frac{R^2-\|x\|^2}{R^2+\|x\|^2 -  2 R  x_1}\\
=& 1+2\sum_{\k=1}^{\infty} \frac{\rho^k}{R^{\k}} \cos(\k \theta) , \quad R>1.
\end{align*}
\end{exam}
\fi

\begin{exam}
\label{exam2}
For the biharmonic equation \eqref{Biharmonic}, we consider an example with $\lambda_1=\lambda_0=1$, $d =2$
and the exact solution $u=\cos(2\pi(x^2+y^2))-1$. The function $f$ in the right hand side is determined by \eqref{Biharmonic}.
\end{exam}

Figure \ref{fig2} shows the maximum and the $L^2$- errors  of the approximation scheme \eqref{GSM4th}. 
An exponential order of convergence is observed in this plot, which is in agreement with Theorem 
\ref{ConvBi}.  
\begin{figure}[!ht]
\centering
\includegraphics[width=0.49\textwidth]{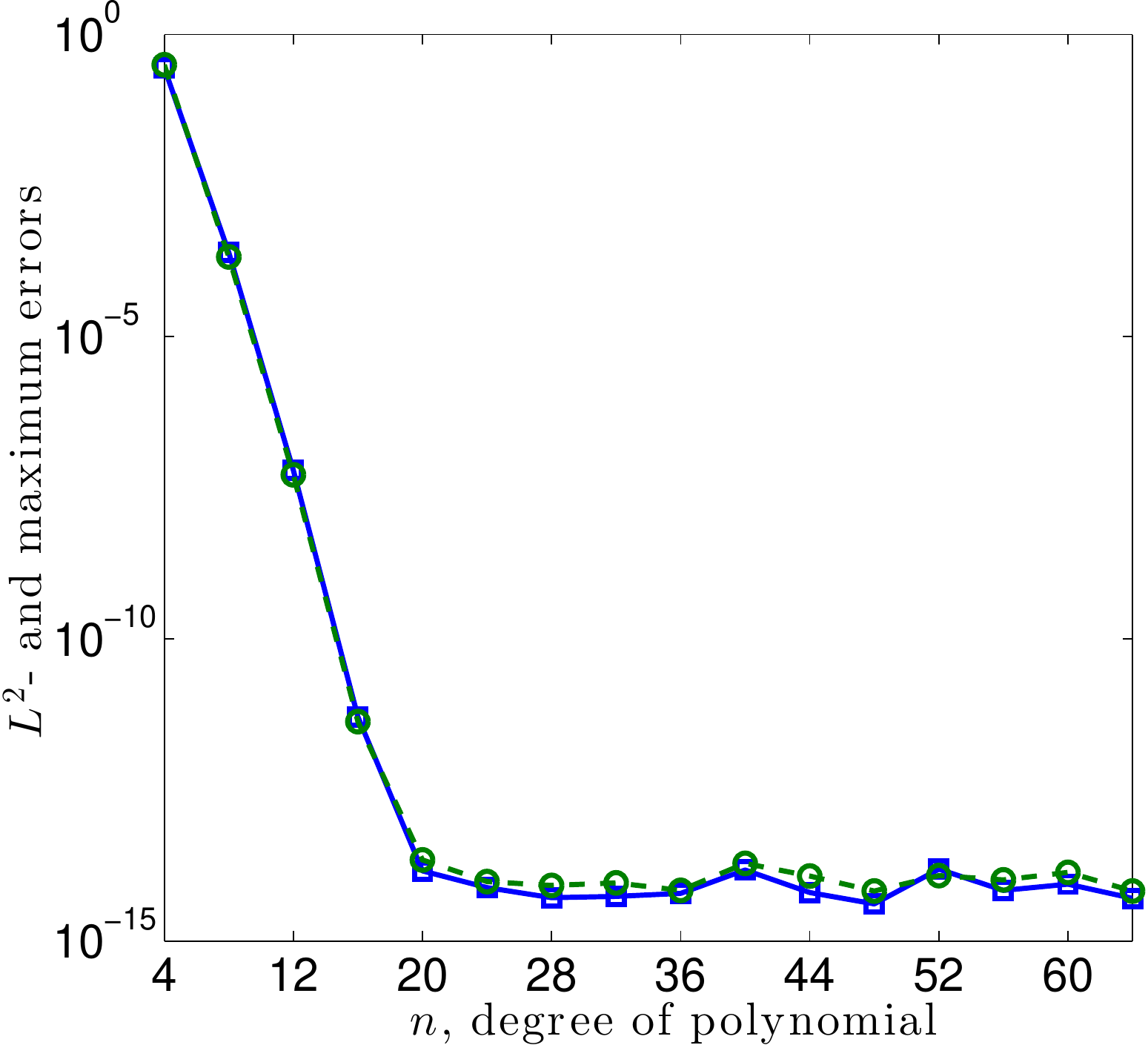}
\caption{$L^2$- (solid) and maximum  (dash) errors of Example \ref{exam2}.}
\label{fig2}
\end{figure}

\appendix
\section{Generalized orthogonal polynomials on the unit ball}\label{Gball}
\setcounter{equation}{0}

For $\a,\b>-1$, the Jacobi polynomials are defined by 
$$
  P_j^{(\a,\b)}(t) = \frac{(a+1)_j}{j!} {}_2F_1 \left(-j, j+\a + \b+a; \a+1; \frac{1-t}{2} \right).
$$
which are  orthogonal to each other with respect to the weight function 
$
   w^{\a,\b}(t) := (1-t)^\a (1+t)^\b
$
 on $[-1,1]$,
\begin{align}
\label{OrtJacP}
 \int_{-1}^1  P^{(\alpha,\beta)}_{j}(t) P^{(\alpha,\beta)}_{k}(t) w^{\a,\b}(t) dt
 = \frac{2^{\a+\b+1}  \delta_{j,k}}{2j+\a+\b+1} \frac{\Gamma(j+\a+1)\Gamma(j+\b+1)}{j!
 \Gamma(j+\a+\b+1)}.
\end{align}
As it is shown in \cite{Sz}, writing the ${}_2F_1$ in the following explicit form 
\begin{align}
\label{JacobiP}
   P^{(\alpha,\beta)}_{j}(t) 
   =& \sum_{k=0}^j  \frac{(k+\a+1)_{j-k} \, (j+\alpha+\beta+1)_{k} }{ (j-k)!\, k! } \left(\frac{t-1}{2}\right)^k,
\end{align}
extends the definition of $P_j^{(\a,\b)}(t)$ for all negative values of $\a$ and/or $\b$ in literature. 
However, if $-j-\a-\b\in\{ 1,2,\dots,j\}$, then a reduction of the degree of $P^{(\a,\b)}_j$ occurs. To 
avoid the degree reduction, we define the generalized Jacobi polynomials by
\begin{align}
\label{JacobiP1}
&\wh{P}^{(\alpha,\beta)}_{j}(t):=
\sum_{k=j_0}^j  \frac{(k+\a+1)_{j-k}  }{ (j-k)!\, k!  (j+\alpha+\beta+k+1)_{j-k} } \left(\frac{t-1}{2}\right)^k,   \quad j\in \NN_0,
\end{align}
where $j_0 = j^{\a,\b}_0(j):= -j-\a-\b$ if $-j-\a-\b\in\{ 1,2,\dots,j\}$ and $j_0 = 0$ otherwise. We also define
that ${P}^{(\alpha,\beta)}_{j}(t)=\wh{P}^{(\alpha,\beta)}_{j}(t)=0$ whenever $j$ is a negative integer. 
By the definition, it is evident that 
\begin{equation} \label{JacobiP2}
\wh{P}^{(\alpha,\beta)}_{j}(t):=
    \frac{1 }{(j+\a+\beta+1)_{j}}{P}^{(\alpha,\beta)}_{j}(t),
    \qquad \hbox{if $j_0= 0$}.
\end{equation}
The lemma below contains several properties of these polynomials, which are well--known properties of
the Jacobi polynomials if $j_0 =0$. 

\begin{lem} For $\a, \b \in \RR$, 
\begin{align} 
\label{BndP}
&\wh{P}_j^{(\a,\b)}(1) = \frac{(\a+1)_j}{j! (j+\a+\b+1)_j} \delta_{j_0,0},
\\
\label{JacPN}
&\wh{P}_j^{(\a,\b)}(t) = \frac{1}{(j+\a+1)_{-\a}} \Big(\frac{t-1}{2}\Big)^{-\a}  \wh{P}_{j+\a}^{(-\a,\b)}(t), \quad j\ge -\a\in \NN,
\\
\label{DiffP}
&\frac{d}{dt}\wh{P}_j^{(\a,\b)}(t) = \frac{1}{2}\wh{P}_{j-1}^{(\a+1,\b+1)}(t),  \qquad j\ge 0.
\end{align} 
\end{lem}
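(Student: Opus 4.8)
The plan is to prove all three identities by direct manipulation of the defining series \eqref{JacobiP1}, handling the three cases separately; in the non-degenerate case $j_0=0$ each reduces, via \eqref{JacobiP2}, to a classical fact about Jacobi polynomials, so the real work is only the bookkeeping of the cut-off index $j_0$.

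For \eqref{BndP} I would simply set $t=1$ in \eqref{JacobiP1}: every term with $k\ge 1$ vanishes, so $\wh P_j^{(\a,\b)}(1)$ is the $k=0$ term when it is present (that is, when $j_0=0$) and is $0$ otherwise (when $j_0\ge1$). The $k=0$ term equals $\frac{(\a+1)_j}{j!\,(j+\a+\b+1)_j}$, which yields \eqref{BndP} at once.

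For \eqref{JacPN} write $\a=-m$ with $m\in\NN$ and $j\ge m$. The key observation is that the numerator Pochhammer $(k+\a+1)_{j-k}=(k-m+1)_{j-k}$ is a product of $j-k$ consecutive integers running from $k-m+1\le 0$ up to $j-m\ge 0$, hence contains a zero factor whenever $0\le k\le m-1$; so only the terms $k\ge m$ contribute. Substituting $k=\ell+m$ and pulling out $\big(\tfrac{t-1}{2}\big)^{m}$ rewrites the series as
$$\Big(\tfrac{t-1}{2}\Big)^{m}\sum_{\ell}\frac{(\ell+1)_{j-m-\ell}}{(j-m-\ell)!\,(\ell+m)!\,(j+\b+\ell+1)_{j-m-\ell}}\Big(\tfrac{t-1}{2}\Big)^{\ell}.$$
Comparing the coefficient of $\big(\tfrac{t-1}{2}\big)^{\ell}$ with that of $\wh P_{j-m}^{(m,\b)}(t)$ in \eqref{JacobiP1}, and using $(\ell+1)_{j-m-\ell}=(j-m)!/\ell!$ together with $(\ell+m+1)_{j-m-\ell}=j!/(\ell+m)!$, one sees the two differ by the $\ell$-independent factor $(j-m)!/j!=1/(j-m+1)_m=1/(j+\a+1)_{-\a}$, which is exactly \eqref{JacPN}.

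For \eqref{DiffP} I would differentiate \eqref{JacobiP1} term by term: the $k=0$ term disappears and the chain rule inserts a factor $\tfrac{k}{2}$, so after $k=\ell+1$ the coefficient of $\big(\tfrac{t-1}{2}\big)^{\ell}$ becomes $\tfrac12\cdot\frac{(\ell+\a+2)_{j-1-\ell}}{(j-1-\ell)!\,\ell!\,(j+\a+\b+\ell+2)_{j-1-\ell}}$. Since $(j-1)+(\a+1)+(\b+1)+\ell+1=j+\a+\b+\ell+2$, this is precisely $\tfrac12$ times the corresponding coefficient of $\wh P_{j-1}^{(\a+1,\b+1)}(t)$, giving \eqref{DiffP}.

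The only point needing care — and the main obstacle — is verifying in each case that the index shift carries the summation range $\{j_0,\dots,j\}$ of one series onto the correct range of the other. For \eqref{JacPN} this requires checking that the terms with $k<m$ really drop out (they carry a zero numerator) and that the remaining lower cut-off agrees with $j_0^{m,\b}(j-m)$; for \eqref{DiffP} one notes $j_0^{\a+1,\b+1}(j-1)=\max\{j_0^{\a,\b}(j)-1,0\}$. In the degenerate parameter ranges $j_0\ge1$ one also uses the fact that the cut-off $j_0$ was defined precisely to exclude the terms in which a denominator Pochhammer would vanish, so no spurious terms intervene. This is routine bookkeeping rather than a genuine difficulty.
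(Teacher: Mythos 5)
Your proposal is correct and follows essentially the same route as the paper, which proves all three identities by comparing coefficients of powers of $\tfrac{t-1}{2}$ in the defining series \eqref{JacobiP1} (the paper merely states this in one line, while you carry out the index shifts and the cut-off bookkeeping explicitly). The coefficient computations and the tracking of $j_0$ under the shifts $k\mapsto k+m$ and $k\mapsto k+1$ all check out.
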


By comparison of the corresponding powers of $t-1$, both \eqref{JacPN} and \eqref{DiffP} follow from 
\eqref{JacobiP2}, and \eqref{BndP} is an immediate consequences of \eqref{JacobiP2}.
 
We now extend the definition of the orthogonal polynomials \eqref{baseP} on the unit ball to negative $\mu$,
\begin{defn}
Let $\mu \in \RR$. For $n \in \NN_0$ and $0 \le j \le \tfrac{n}{2}$, let $\{Y_\ell^{n-2j}: 1\le \ell\le a_{n-2j}^d\}$ be an 
orthonormal basis for $\mathcal{H}_{n-2j}^d$. Define 
\begin{align}
\label{GbaseP}
P^{\mu,n}_{j,\ell}(x) :=  (n-j+\tfrac{d}{2})_j  \wh{P}^{(\mu,n-2j+\tfrac{d}{2}-1)}_{j}(2\|x\|^2-1) Y^{n-2j}_{\ell}(x).
\end{align}
\end{defn}

We now prove Lemma \ref{DeltaP}, which we restate below.  

\medskip
{\noindent \bf Lemma \ref{DeltaP}}. \emph{Let $s\in \NN$ and $k,n\in \NN_0$. Then for $ 1\le\ell \le a_{n-2j}^d$,
\begin{align} %\label{PN2P*}
&P^{-s,n}_{j,\ell}(x)
= \frac{(1-n-\tfrac{d}{2})_j}{(-j)_{s} (1-n-\tfrac{d}{2}+2s)_{j-s}}  (\|x\|^2-1)^{s} P^{s,n-2s}_{j-s,\ell}(x),  \quad   s \le j\le \tfrac{n}{2},
\tag{\ref{PN2P}}
\\
%\end{align}
%Furthermore, make the convention $P^{\mu,n}_{j,\ell}(x)=0$ if $j<0$ or $j>\frac{n}{2}$. Then
%for $0\le j \le \frac{n}{2}$, 
%\begin{align}
%\label{LaplaceP*}
&       \Delta^{k} P^{-s,n}_{j,\ell} (x) = 4^{k} 
       (n+\tfrac{d}2-2 k)_{2 k} P^{2  k-s,n-2\k}_{j- k,\ell} (x)
       + p(\|x\|^2) Y^{n-2j}_{\ell}(x), \quad 0\le j \le \tfrac{n}{2}, \tag{\ref{LaplaceP}}
\end{align} 
where 
$p
\in \Pi^1_{j_0-k-1}$,
$j_0=j_0^{-s,n-2j+\frac{d}{2}-1}(j)$ and $P^{\mu,n}_{j,\ell}(x)=0$ if $j<0$ or $j>\frac{n}{2}$.
In particular,  $p=0$ if $j+k\ge s$. }

%\medskip
\begin{proof}
The identity \eqref{PN2P} is an immediate consequence of \eqref{JacPN} and \eqref{GbaseP}. Let $q$ be a polynomial
of degree $j$. Recall that, by \eqref{eq:Delta-pola},  
$$
\Delta [q(\|x\|^2) Y^n_{\ell}(x) ]=  Dq(\|x\|^2) Y^n_\ell (x)
$$
with $Dq(u) := 4 \big[u q''(u) + (n+\tfrac{d}{2}) q'(u)   \big]$. Let $\b=n-2j+\frac{d}{2}-1$, 
$q(u)=\wh{P}^{(-s,\beta)}_j(2u-1)$ and $t = 2u-1$. It follows from \eqref{JacobiP1} and \eqref{JacPN} that 
\begin{align*}
   Dq(u) & = 8  \big[ (\b+1)\partial_{t}  +(1+t) \partial_{t}^2 \big] \wh{P}_j^{(-s,\b)}(t)  \\
  & = 4\sum_{k=j_0}^j 
  \frac{  (k-s+1)_{j-k}   k  }{(j-k)!k!  (j-s+\b+k+1)_{j-k} }
  \Big[     (\b  + k)\frac{t-1}{2} + k-1 \Big]    \Big(\frac{t-1}{2}\Big)^{k-2}
\\
  &=  4(j+\b) \sum_{k=j_0}^j  \frac{  (k-s+2)_{j-k}  }{(j-k)!(k-1)! (j-s+\b+k+1)_{j-k} }   \Big(\frac{t-1}{2}\Big)^{k-1}
  \\
  &\quad +  \frac{4 (j_0-s+1)_{j-j_0}   }{(j-j_0)!(j_0-2)!  (j-s+\b+j_0+1)_{j-j_0} }
       \Big(\frac{t-1}{2}\Big)^{j_0-2}
\\
   & =  4(j+\b) \wh{P}_{j-1}^{(-s+2,\b)}(t) 
   + p_{j_0-2}(t),
\end{align*}
where $p_{j_0-2}\in \Pi^1_{j_0-2}$, and  the last equal sign is derived using the fact that
 $j_0 = j^{(-s,\b)}_0(j) =  j^{(-s+2,\b+1)}_0(j-1)+1$ if $j_0\ge 1$.
Putting above computations together, it follows that 
\begin{align*}
\Delta P^{-s,n}_{j,\ell}(x) 
& = (n-j+\tfrac{d}{2})_j \Delta \big[ \wh{P}^{(-s,n-2j+\tfrac{d}{2}-1)}_j(2\|x\|^2-1)Y^{n-2j}_{\ell}(x) \big]
\\
& =  4(n-j+\tfrac{d}{2}-1)_{j+1} 
\wh{P}^{(-s+2,n-2j+\tfrac{d}{2}-1)}_{j-1}(2\|x\|^2-1)Y^{n-2j}_{\ell}(x)  + Q_{j_0-2}(x)\\
& = 
 4(n+\tfrac{d}{2}-1)(n+\tfrac{d}{2}-2)P^{-s+2,n-2}_{j-1,\ell}(x)+ Q_{j_0-2}(x),
\end{align*}
where $Q_{j_0-2}(x) =(n-j+\tfrac{d}{2})_j   p_{j_0-2}(2\|x\|^2-1) Y^{n-2j}_{\ell}(x)$. Using this identity recursively, we 
derive 
\begin{align*}
\Delta^k P^{-s,n}_{j,\ell}(x) 
 4^k(n+\tfrac{d}{2}-2k)_{2k}P^{-s+2k,n-2k}_{j-k,\ell}(x)+ Q_{j_0-k-1}(x),
\end{align*}
where $Q_{j_0-k-1}(x) =p(2\|x\|^2-1) Y^{n-2j}_{\ell}(x)$ for certain $p\in \Pi_{j_0-k-1}^1$.
Specifically,   $  s-(n-2j+\frac{d}{2}-1)-j = s+j-n-\frac{d}{2}+1 \le k $
if $j+k\ge s$,  in return, $j_0\le k$ and $ \Pi^1_{j_0-k-1} \ni p = 0$. This completes the proof.
\end{proof}

Monic orthogonal polynomials in $\CV_n^d(\varpi_\mu)$ are defined by (cf. \cite[p. 42]{DX})
\begin{align} \label{monic}
 V_\alpha^\mu (x)=   \sum_\gamma 
  \frac{(-\a)_{2\g}}{(1-\mu-\tfrac{d}{2}-|\a|)_{|\g|} \g!}
 2^{-2 |\gamma|} x^{\a- 2\gamma}.
%  \frac{\a!}{(\mu+\tfrac{d}{2})_{|\a|}}  \frac{(\mu+\tfrac{d}{2})_{|\alpha| - |\gamma|}   (-\alpha+\gamma)_{\gamma}}   {(\alpha-\gamma)! \gamma!}  
\end{align}
Since $(-\a_i)_{2\g_i} =0$ if $2\g_i > \a_i$, $V_\a^\mu$ is a polynomial of degree $|\alpha|$; 
in fact, $V_\a^\mu(x) - x^\a \in \Pi_{|\alpha|-1}^d$. Moreover, $\{ V_\alpha^\mu: |\a|=n, \, \a \in \NN_0^d\}$
is a basis of $\CV_n^d(\varpi_\mu)$. 
Moreover, $V_{\a}^{\mu}$ is well defined  
if and only if  $(1-\mu-\tfrac{d}{2}-|\a|)_{|\g|}\neq 0$ for all $\g\le \lfloor \a/2\rfloor$.
%\iffalse 
If there exists $\g_0\in \NN^d_0$  such that $(1-\mu-\tfrac{d}{2}-|\a|)_{|\g_0|}= 0$  and $2\g_0\le  \a$, 
we use  a truncated series for $V_\alpha^{-s} (x)$,
\begin{align}
\label{Gmonic}
 V_\alpha^{\mu} (x) =
 \sum_{|\g| \le \mu+\tfrac{d}{2}+|\a|-1  }   \frac{  (-\a)_{2\g} }
   { (1-\mu-|\a|-\tfrac{d}{2})_{|\g|}  \g!} 2^{-2 |\g|} x^{\a- 2\g},
\end{align}
which removes the lower order terms in $V_\a^{\mu} (x)$. 
%\fi

\begin{lem} 
\label{lm:DiffV}For $\b\in \NN_0^d$ and $\mu \in \RR$ with $\mu + \frac d 2 +|\a|  \notin \{1,2,\ldots, \lfloor \f \a 2 \rfloor\}$, 
\begin{align}
\label{DiffV}
\partial^{\beta} V_\a^\mu (x)  =  (-1)^{|\beta|} (-\alpha)_{\beta}
 V_{\a-\beta}^{\mu+|\beta|} (x), \quad \alpha\in \NN_0^d.
\end{align}
\end{lem}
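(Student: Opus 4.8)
The plan is to reduce \eqref{DiffV} to the single‑derivative case $\beta=e_i$ (with $e_i$ the $i$th coordinate unit vector) and then iterate on $|\beta|$.

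The one structural fact behind everything is that the Pochhammer base $1-\mu-\tfrac d2-|\a|$ occurring in \eqref{monic} is invariant under the replacement $(\a,\mu)\mapsto(\a-e_i,\mu+1)$, since $|\a-e_i|=|\a|-1$. In particular the admissibility hypothesis on $(\a,\mu)$ — equivalently $(1-\mu-\tfrac d2-|\a|)_{|\g|}\neq0$ for all $\g\le\lfloor\a/2\rfloor$, i.e. exactly the condition for $V_\a^\mu$ to be given by the untruncated series \eqref{monic} — is preserved under this replacement, because the largest relevant value of $|\g|$ can only decrease. Hence I may work with \eqref{monic} rather than the truncated \eqref{Gmonic} throughout, and both sides of \eqref{DiffV} will be legitimately defined by \eqref{monic} at every stage of the induction.

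For the base case I claim $\partial_i V_\a^\mu=\a_i V_{\a-e_i}^{\mu+1}$, which is \eqref{DiffV} with $\beta=e_i$ since $(-1)^1(-\a)_{e_i}=\a_i$. Differentiating \eqref{monic} termwise, the $\g$-th summand acquires the factor $\a_i-2\g_i$ and the monomial $x^{\a-e_i-2\g}$; the $\g$-th summand of $V_{\a-e_i}^{\mu+1}$ has numerator $(-(\a-e_i))_{2\g}$, the same monomial, and — by the invariance above — the same denominator. Matching coefficients thus reduces to the one‑variable identity $\a_i(-\a_i+1)_{2\g_i}=(\a_i-2\g_i)(-\a_i)_{2\g_i}$, which follows by writing $\a_i-2\g_i=-(-\a_i+2\g_i)$ and pulling the factor $-\a_i$ off the right‑hand product. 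The only subtlety is that when $\a_i$ is even the term $\g_i=\a_i/2$ appears in $\partial_i V_\a^\mu$ but not in $V_{\a-e_i}^{\mu+1}$; it is harmless because its coefficient carries the factor $\a_i-2\g_i=0$, so the two finite sums really coincide.

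For general $\beta$ I would write $\beta=\beta'+e_i$, apply the base case, and then the induction hypothesis to $\partial^{\beta'}V_{\a-e_i}^{\mu+1}$; the constant $\a_i(-1)^{|\beta'|}(-(\a-e_i))_{\beta'}$ collapses to $(-1)^{|\beta|}(-\a)_\beta$ once one notes $(-\a_i)_{\beta'_i+1}=(-\a_i)(-\a_i+1)_{\beta'_i}$, i.e. $(-\a)_\beta=(-\a_i)\,(-(\a-e_i))_{\beta'}$. The case $\beta\not\le\a$ needs only a remark: then $(-\a)_\beta$ contains a vanishing factor $(-\a_j)_{\beta_j}$ with $\beta_j>\a_j$, while $\partial_j^{\a_j+1}V_\a^\mu=0$ (iterate the base case until the $j$th index reaches $0$), so both sides vanish. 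I do not expect a genuine obstacle: the substance is just the invariance of the Pochhammer base together with the one‑line one‑variable identity, and the only care required is bookkeeping the finitely many boundary terms and checking that admissibility propagates along the iteration.
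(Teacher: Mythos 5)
Your proposal is correct and follows essentially the same route as the paper: termwise differentiation of \eqref{monic}, the one\nobreakdash-variable Pochhammer identity $(-\a_i)_{2\g_i}(\a_i-2\g_i)=\a_i(1-\a_i)_{2\g_i}$ (yours is the same identity rearranged), the invariance of the base $1-\mu-\tfrac d2-|\a|$ under $(\a,\mu)\mapsto(\a-e_i,\mu+1)$, and recursion on $|\b|$. Your extra bookkeeping (the vanishing $\g_i=\a_i/2$ term, the case $\b\not\le\a$, and the propagation of admissibility) only makes explicit what the paper leaves to the reader.
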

\begin{proof}
Taking derivative $\partial_i$ on \eqref{monic} or \eqref{Gmonic}, 
and using the fact that $(-\a_i)_{2\g_i} (\a_i-2\g_i) = \a_i   (1-\a_i)_{2\g_i}$, it is easy to see that 
\begin{align*}
  \partial_i V_\a^\mu (x) =&   
   \sum_\g     \frac{(-\a)_{2\g}}{(1-\mu-\tfrac{d}{2}-|\a|)_{|\g|} \g!} 2^{-2 |\gamma|}   (\a_i-2\g_i)x^{\a- 2\g-e_i}
%        \\
%  =& \alpha_i \sum_\g     \frac{(-\a+e_i)_{2\g}}{(1-\mu-\tfrac{d}{2}-|\a|)_{|\g|} \g!} 2^{-2 |\gamma|}   (\a_i-2\g_i)x^{\a-e_i- 2\g}
         =  \a_i V_{\a-e_i}^{\mu+1}(x),
\end{align*}
which, when used recursively, leads to \eqref{DiffV}. 
\end{proof}
 
%\midskip

\section{Sobolev spaces}
\setcounter{equation}{0}

In this appendix we discuss equivalent norms of the Sobolev space $W_p^s(\ball)$. Several 
results that we shall need hold for fairly general domain $\Omega$ in $\RR^d$. We state
only their simplified version for $\Omega = \ball$ and/or $\Omega =\sph$. In this appendix, we
adopt the convention that $A \sim B$ means $c_1 A \le B \le c_2 A$ for some constants $c_2 > c_1 >0$.

For $s =1,2,\ldots$ and $1 \le  p \le \infty$, we define a semi-norm of $W_p^s(\ball)$ by
\begin{align*}
  &|f|_{W_p^s(\ball)} : = \Big(\sum_{\a \in \NN^d_0,\, |\a| = s} \|\partial^\a f\|_{L^p(\ball)}^p \Big)^{1/p}.
\end{align*}

\begin{lem}[\text{\cite[Theorem 5.12, p.143]{Adam}}] \label{lem:interD}
Let $0<\varepsilon_0< \infty$,  let $1\le p < \infty$,  and let $j$ and $s$ be integers with
$0 < j < s - 1$. There exists a constant $K = K(\varepsilon_0, s, p, d)$ 
 such that for every  $f \in W^s_p (\ball)$,
\begin{align*}
  |f|_{W^j_p(\ball)} \le K \varepsilon |f|_{ W^s_p(\ball)} + K 
         \varepsilon^{-j/(s-j)} \|f\|_{L^p(\ball)}, \quad 0 < \varepsilon \le \varepsilon_0.
\end{align*}
\end{lem}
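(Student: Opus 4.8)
The result is classical (it is quoted above from \cite{Adam}); here is how I would prove it. The plan is to reduce everything to one scaling estimate on an interval, lift it to $\RR^d$ where the sharp multiplicative (Gagliardo--Nirenberg) form is available, transport the multiplicative inequality to $\ball$ through a bounded extension operator, and then convert it into the claimed additive $\varepsilon$-form by Young's inequality together with an absorption argument. The restriction $0<\varepsilon\le\varepsilon_0$ is needed only for the absorption, and it is there that the dependence $K=K(\varepsilon_0,s,p,d)$ is produced.

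First I would establish the one-dimensional core. Fixing $\psi\in C_c^\infty(0,1)$ with $\int_0^1\psi(t)\,dt=1$, for $g\in W^2_p(0,1)$ and a.e.\ $x\in(0,1)$ I would write
\[
 g'(x)=\int_0^1\psi(t)\,g'(t)\,dt+\int_0^1\psi(t)\!\int_t^x g''(\tau)\,d\tau\,dt ,
\]
integrate the first term by parts, and estimate by H\"older to get $\|g'\|_{L^\infty(0,1)}\le c\big(\|g''\|_{L^p(0,1)}+\|g\|_{L^p(0,1)}\big)$, hence the same with $L^p(0,1)$ on the left. Rescaling $x\mapsto hx$ converts this into $\|g'\|_{L^p(0,h)}\le c\,h\,\|g''\|_{L^p(0,h)}+c\,h^{-1}\|g\|_{L^p(0,h)}$; summing the $p$-th powers over a tiling of $\RR$ by intervals of length $h$ and taking $h=\delta$ yields $\|g'\|_{L^p(\RR)}\le c\,\delta\|g''\|_{L^p(\RR)}+c\,\delta^{-1}\|g\|_{L^p(\RR)}$ for all $\delta>0$, and Fubini in the remaining variables promotes this to the analogous bound for $\partial_iU$, $U\in W^2_p(\RR^d)$. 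Applying that bound to $\partial^\gamma U$ with $|\gamma|=m-1$, summing over $\gamma$ and $i$, and minimising over $\delta>0$ gives the discrete log-convexity $\|\nabla^mU\|_{L^p(\RR^d)}^2\le c\,\|\nabla^{m-1}U\|_{L^p(\RR^d)}\|\nabla^{m+1}U\|_{L^p(\RR^d)}$, which iterates over $m=1,\dots,s-1$ to
\[
 \|\nabla^jU\|_{L^p(\RR^d)}\le c\,\|U\|_{L^p(\RR^d)}^{1-j/s}\,\|\nabla^sU\|_{L^p(\RR^d)}^{j/s},\qquad 0\le j\le s .
\]

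Next I would transfer and convert. Since $\ball$ is smooth, there is a bounded extension operator $E$ from $W^m_p(\ball)$ to $W^m_p(\RR^d)$ for $m=0,1,\dots,s$ at once; applying the multiplicative inequality to $U=Ef$ and using $\|\nabla^j f\|_{L^p(\ball)}\le\|\nabla^j(Ef)\|_{L^p(\RR^d)}$, $\|Ef\|_{L^p(\RR^d)}\le c\|f\|_{L^p(\ball)}$ and $\|\nabla^s(Ef)\|_{L^p(\RR^d)}\le c\|f\|_{W^s_p(\ball)}$ gives $|f|_{W^j_p(\ball)}\le c\,\|f\|_{L^p(\ball)}^{1-j/s}\|f\|_{W^s_p(\ball)}^{j/s}$. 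Young's inequality with exponents $s/j$ and $s/(s-j)$ turns this into $|f|_{W^j_p(\ball)}\le c\,\mu\,\|f\|_{W^s_p(\ball)}+c\,\mu^{-j/(s-j)}\|f\|_{L^p(\ball)}$ for every $\mu>0$. Summing over $1\le j\le s-1$, bounding $\|f\|_{W^s_p(\ball)}\le|f|_{W^s_p(\ball)}+\sum_{1\le j\le s-1}|f|_{W^j_p(\ball)}+\|f\|_{L^p(\ball)}$, and absorbing $\sum_j|f|_{W^j_p(\ball)}$ on the left for one fixed small value $\mu=\mu_0$ (this is where $\varepsilon\le\varepsilon_0$ is forced), I would obtain $\|f\|_{W^s_p(\ball)}\le 2|f|_{W^s_p(\ball)}+c\|f\|_{L^p(\ball)}$; feeding this back into the Young bound (now with $\mu$ free) and renaming $c\mu=\varepsilon$ produces $|f|_{W^j_p(\ball)}\le\varepsilon|f|_{W^s_p(\ball)}+K\varepsilon^{-j/(s-j)}\|f\|_{L^p(\ball)}$ for $0<\varepsilon\le\varepsilon_0$ and all $0<j<s$, which in particular covers the required range $0<j<s-1$.

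The hard part will be the geometry near $\partial\ball$. One cannot simply run the one-dimensional estimate on $\ball$ by foliating into chords parallel to $e_i$: chords that graze $\sph$ are arbitrarily short, so their reciprocal-length factors diverge and no uniform bound survives. Circumventing this is exactly the role of the extension operator (equivalently, a partition of unity with local reflection across $\sph$). Everything else---minimising to log-convexity, iterating it across orders, and the final absorption that yields the sharp exponent $-j/(s-j)$---is routine bookkeeping.
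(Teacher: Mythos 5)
The paper does not prove this lemma at all --- it is quoted from Adams \cite[Theorem 5.12]{Adam} --- so there is no in-paper argument to compare against; what you have written is a correct, self-contained proof of the cited result. Your route is the standard one and each step checks out: the averaged Taylor representation gives the unit-interval bound $\|g'\|_{L^p(0,1)}\le c(\|g''\|_{L^p}+\|g\|_{L^p})$; scaling and tiling turn it into the two-parameter estimate on $\RR$ valid for all $\delta>0$; Fubini, minimisation over $\delta$, and iteration of the resulting log-convexity $\|\nabla^mU\|^2\le c\|\nabla^{m-1}U\|\,\|\nabla^{m+1}U\|$ yield the multiplicative inequality on $\RR^d$ (note that $\|\nabla^{m+1}U\|=0$ forces $U=0$ for $U\in L^p(\RR^d)$, so the minimisation is legitimate); and Young plus the one-time absorption at a fixed small $\mu_0$ correctly replaces $\|f\|_{W^s_p(\ball)}$ by $|f|_{W^s_p(\ball)}$ on the right, which is precisely where the restriction $\varepsilon\le\varepsilon_0$ and the dependence $K=K(\varepsilon_0,s,p,d)$ enter. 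The one point where you genuinely diverge from Adams is the passage from $\RR^d$ to the domain: Adams proves the lemma directly for domains satisfying a uniform cone condition by running the one-dimensional estimate along directions adapted to the cones (which is how he circumvents the short-chord degeneracy you correctly flag), whereas you invoke a total extension operator $E:W^m_p(\ball)\to W^m_p(\RR^d)$, bounded simultaneously for $m=0$ and $m=s$. For the ball both options are available; the extension route is shorter but imports the Stein/Hestenes extension theorem as a black box, while Adams' cone argument is more elementary and more general. Your argument in fact gives the inequality for all $0<j<s$, which covers the (slightly oddly stated) range $0<j<s-1$ used in the paper.
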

 As a consequence of this lemma, it follows that 
\begin{align} \label{norm-equiv}
  & \| f\|_{W_{p}^{s}(\ball) } \sim  \|f\|_{L^p(\ball)}  +   |f|_{W_{p}^{s}(\ball)}
       \sim   \|f\|_{L^p(\ball)}   +\sum_{i=1}^d \|\partial_i^s f\|_{L^p(\ball)},
\end{align}
where the last equivalence signs  are derived from \cite[Thoerem 4.2.4, p.316]{Triebel}.

We need another lemma on equivalent norms in $W^s_p(\ball)$.

\begin{lem}\cite[Theorem 1.1.16]{mazia2011} \label{EquivNorm1}
Let $s = 1,2,\ldots$ and let $\CF(f)$ be a continuous seminorm in $W^{s}_p(\ball)$
such that $\CF(P_{s-1})\neq 0$ for any nonzero polynomial $P_{s-1}\in \Pi_{s-1}^d$.
 Then 
\begin{align*}
   \sum_{|\a|=s} \|\partial^{\a} f\|_{L^p(\ball)}  + \CF(f) \sim \|f\|_{W_p^{s}(\ball)}.
\end{align*}
\end{lem}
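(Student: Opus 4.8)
The plan is to establish the nontrivial estimate
$\|f\|_{W_p^s(\ball)} \le c\bigl(\sum_{|\a|=s}\|\partial^\a f\|_{L^p(\ball)} + \CF(f)\bigr)$
by a standard compactness--contradiction argument; the reverse inequality is immediate, since $\CF$ is by hypothesis a bounded seminorm on $W_p^s(\ball)$ and each of the finitely many terms $\|\partial^\a f\|_{L^p(\ball)}$ with $|\a|=s$ is trivially dominated by $\|f\|_{W_p^s(\ball)}$.

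First I would suppose the estimate fails, so there is a sequence $(f_k)\subset W_p^s(\ball)$ with $\|f_k\|_{W_p^s(\ball)}=1$ for all $k$ while $\sum_{|\a|=s}\|\partial^\a f_k\|_{L^p(\ball)} + \CF(f_k)\to 0$. Since $\ball$ is a bounded Lipschitz domain, the Rellich--Kondrachov theorem provides a compact embedding $W_p^s(\ball)\hookrightarrow W_p^{s-1}(\ball)$, so after passing to a subsequence $f_k\to f$ in $W_p^{s-1}(\ball)$. Because $\partial^\a f_k\to 0$ in $L^p(\ball)$ for every $|\a|=s$ and weak differentiation is continuous for distributional convergence, we get $\partial^\a f=0$ for all $|\a|=s$; moreover $\|f_k-f\|_{W_p^s(\ball)}^p = \|f_k-f\|_{W_p^{s-1}(\ball)}^p + \sum_{|\a|=s}\|\partial^\a f_k\|_{L^p(\ball)}^p \to 0$, so in fact $f_k\to f$ in $W_p^s(\ball)$ and $\|f\|_{W_p^s(\ball)}=1$.

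Next, since all weak derivatives of $f$ of order $s$ vanish on the connected open set $\ball$, $f$ agrees a.e.\ with some polynomial $P_{s-1}\in\Pi_{s-1}^d$, and $P_{s-1}\neq 0$ because $\|f\|_{W_p^s(\ball)}=1$. Finally, continuity of $\CF$ on $W_p^s(\ball)$ together with $\CF(f_k)\to 0$ forces $\CF(P_{s-1}) = \CF(f) = \lim_k \CF(f_k) = 0$, contradicting the hypothesis that $\CF$ does not vanish on any nonzero element of $\Pi_{s-1}^d$. This contradiction yields the claimed equivalence.

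The main obstacle is really just invoking the correct background cleanly rather than any genuine difficulty: one needs the compactness of $W_p^s(\ball)\hookrightarrow W_p^{s-1}(\ball)$, which holds because $\ball$ has the cone property, and the structure fact that a distribution on a connected domain with vanishing $s$-th order derivatives is a polynomial of degree $<s$. Both are classical, which is why the statement is simply quoted from Maz'ya; with them in hand the argument above is routine.
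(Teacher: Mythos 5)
Your proof is correct: the paper does not prove this lemma at all but simply quotes it from Maz'ya--Shaposhnikova (Theorem 1.1.16), and your compactness--contradiction argument (Rellich--Kondrachov to get $W^s_p(\ball)\hookrightarrow\hookrightarrow W^{s-1}_p(\ball)$, upgrade to strong $W^s_p$ convergence since the top-order derivatives tend to $0$, identify the limit as a nonzero polynomial in $\Pi_{s-1}^d$ annihilated by $\CF$) is exactly the standard proof of that cited result. All the steps check out, including the two classical ingredients you flag explicitly, so there is nothing to add.
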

A combination of \eqref{norm-equiv}  with Lemma \ref{EquivNorm1} leads to 
\begin{align}
\label{eq:EquivNorm1}
\begin{split}
 \|f\|_{W_p^{m+s}(\ball)} 
 \sim& \sum_{|\b|=m+s} \|\partial^{\b} f\|_{L^p(\ball)}  
   + \bigg[ \sum_{|\a|=s} \|\partial^{\a} f\|_{L^p(\ball)}   + \CF(f)\bigg]
   \\
    \sim  &  \sum_{|\a|=s} \|\partial^{\a} f\|_{W^m_p(\ball)}  + \CF(f).
\end{split}    
\end{align}

\iffalse
Recall that  the angular differential operators are defined by $D_{i,j}= x_i \partial_j - x_j \partial_i, \, 1 \le i< j \le d$.
% such that $D_{i,j} = \frac{\partial}{\partial \t_{i,j}}$
%in polar coordinates on the plane $(x_i,x_j) = r_{i,j} (\cos \t_{i,j}, \sin \t_{i,j})$. 
We define a semi-norms of $W_p^s(\sph)$ with  $s \in \NN$ and $1 \le  p \le \infty$ by
\begin{align*}
  &|f|_{W_p^s(\sph)} : = \Big(\sum_{ |\a| = s} \|D^\a f\|_{p,\sph}^p \Big)^{1/p},
\end{align*}
where
 $
 D^{\a} = \prod_{1\le i <j \le d } D_{i,j}^{\a_{i,j}}$ for any $  \a \in \NN^{d(d-1)/2}_0.
 $
Analogous to  \eqref{norm-equiv},  $W^s_p(\sph)$ has the following equivalent norms
\begin{align} \label{norm-equivS}
  & \| f\|_{W_{p}^{s}(\sph) } \sim  \|f\|_{p,\sph} +   |f|_{W_{p}^{s}(\sph)}
       \sim   \|f\|_{p,\sph}  +\sum_{1\le i<j\le d} \|D_{i,j}^s f\|_{p,\sph}.
\end{align}
\fi

We need the fractional order Sobolev space on $\sph$, which is defined via the interpolation space.
For $f\in W_p^{s_0}(\sph)+W_p^{s_1}(\sph)$ with $s_0,s_1\in \NN_0$, we define the $K$-functional 
$$
   K_{s_0,s_1}(f,t)_{p, \sph} : = \inf_{f=f_0+f_1} \left\{ \|f_0\|_{W_p^{s_0}(\sph)} + t \|f_1\|_{W_p^{s_1}(\sph)} \right\}.
$$
For $0 < \t <1$ and $0<\theta<1$, the fractional order Sobolev space $W^{s+\theta}_p(\sph)$ is defined 
as the interpolation space $(W^{s}_p(\sph), W^{s+1}_p(\sph))_{\theta,p}$ via the $K$-functional  \cite[\S1.3]{Triebel}, 
\begin{align*}
   W^{s+\theta}_p(\sph) := % ( L^p(\sph), W^{s}_p(\sph)     )_{\theta,p}:=
   \left\{ f\in W^{s}_p(\sph)+W^{s+1}_p(\sph): \|f\|_{W^{s+\theta}_p(\sph)}<\infty \right\},
\end{align*}
where the norm is defined by 
\begin{align*}
 \|f\|_{W^{s+\theta}_p(\sph)}:
   = \left( \int_{0}^{\infty} \big[ t^{-\theta} K_{s,s+1}(f,t)_{p, \sph}  \big]^p  \frac{dt}{t}\right)^{1/p}.
\end{align*}
It follows that $W^{s+1}_p(\sph)\subset W^{s+\theta}_p(\sph)\subset W^{s}_p(\sph)$ and, furthermore 
\cite[\S1.3]{Triebel}, 
\begin{align}
\label{eq:imbedding}
    &\|f\|_{W^{s}_p(\sph)} \le c \|f\|_{W^{s+\theta}_p(\sph)} , \qquad f  \in W^{s+\theta}_p(\sph).
\end{align}
%   , \\ \label{eq:theta}    & \|f\|_{W^{s+\theta}_p(\ball)} \le c\|f\|_{W^{s}_p(\ball)}^{1-\theta}  \|f\|_{W^{s+1}_p(\ball)}^{\theta} , \quad f\in W^{s+1}_p(\ball).

It is worth to point out that the interpolator $(\cdot,\cdot)_{\theta,p}$ is of type $\theta$. Let 
$X_{\t}:=W^{r+\t}_p(\sph)$, $Y_{\t}:=W^{s+\t}_p(\sph)$ for $0\le \t\le 1$ and assume 
$T\in \CL(X_i, Y_i)$, $i=0,1$. It follows then that $T\in \CL(X_\theta, Y_\theta)$ and 
\begin{align}
\label{eq:theta}
  \|T\|_{\CL(X_\theta,Y_\theta)} 
  \le  \|T\|_{\CL(X_0,Y_0)} ^{1-\theta} 
   \|T\|_{\CL(X_1,Y_1)} ^{\theta}.  
\end{align}

Recall that $\nabla^{2m} = \Delta^m$ and $\nabla^{2m+1} = \Delta^m \nabla$. We define 
$\|\nabla f\|_{p,\ball} := |f|_{W_p^1}$ and define $\|\nabla^{2m+1} f\|_{p,\ball}$ using 
$\Delta^{2m+1} = \nabla \Delta^m$ accordingly. The following lemma is needed in the proof of main theorems. 

\begin{lem}\label{lem:EquivNorm}
For $f \in W_p^r(\ball)$ and $r \ge 1$, 
 \begin{align}\label{eq:EquivNorm2}
 \|\nabla^r f \|_{L^p(\ball)} 
      + \sum_{k=0}^{\lceil \tfrac{r}{2} \rceil-1} \| \Delta^k  f \|_{W_p^{r-2k-1/p}(\sph)} \sim \|f\|_{W_p^r(\ball)}.    
\end{align}
\end{lem}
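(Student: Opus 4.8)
The plan is to establish the two one‑sided estimates contained in \eqref{eq:EquivNorm2} separately. The inequality $\|\nabla^r f\|_{L^p(\ball)}+\sum_{k=0}^{\lceil r/2\rceil-1}\|\Delta^k f\|_{W_p^{r-2k-1/p}(\sph)}\le c\|f\|_{W_p^r(\ball)}$ is the routine direction: $\|\nabla^r f\|_{L^p(\ball)}\le c\|f\|_{W_p^r(\ball)}$ directly from the definition of the Sobolev norm, while for $0\le k\le \lceil r/2\rceil-1$ one has $\Delta^k f\in W_p^{r-2k}(\ball)$ with $\|\Delta^k f\|_{W_p^{r-2k}(\ball)}\le c\|f\|_{W_p^r(\ball)}$, and since $r-2k\ge 1$ the boundary trace theorem $W_p^{m}(\ball)\to W_p^{m-1/p}(\sph)$ (valid for $1<p<\infty$ and $m\ge1$; see \cite{Triebel}, consistent with the interpolation definition of $W_p^{\cdot}(\sph)$ used in this appendix) yields $\|\Delta^k f\|_{W_p^{r-2k-1/p}(\sph)}\le c\|f\|_{W_p^r(\ball)}$.

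For the reverse inequality the idea is to recover $f$ from the data $\nabla^r f$ and $\Delta^0 f|_{\sph},\dots,\Delta^{\lceil r/2\rceil-1}f|_{\sph}$ by solving a polyharmonic boundary value problem with Navier‑type conditions. Write $m=\lfloor r/2\rfloor$. If $r=2m+1$ is odd, then $\nabla^r f=\nabla\Delta^m f$, so $\|\nabla^r f\|_{L^p(\ball)}$ controls $\sum_{i=1}^d\|\partial_i\Delta^m f\|_{L^p(\ball)}$; applying Lemma \ref{EquivNorm1} with $s=1$ and the continuous seminorm $\CF(g):=\|g\|_{L^p(\sph)}$, which does not vanish on nonzero constants, together with the embedding $W_p^{1-1/p}(\sph)\hookrightarrow L^p(\sph)$ supplied by \eqref{eq:imbedding} (with $s=0$, $\theta=1-1/p$), gives $\|\Delta^m f\|_{W_p^1(\ball)}\le c(\|\nabla^r f\|_{L^p(\ball)}+\|\Delta^m f\|_{W_p^{1-1/p}(\sph)})$. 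Thus in both parities we may treat $g:=\Delta^m f=\nabla^{2m}f$ as a known element of $W_p^{r-2m}(\ball)$ (so $L^p(\ball)$ if $r$ is even, $W_p^1(\ball)$ if $r$ is odd) whose norm is bounded by the right‑hand side of \eqref{eq:EquivNorm2}; note that when $r$ is odd the term $\|\Delta^m f\|_{W_p^{1-1/p}(\sph)}$ is exactly the $k=m=\lceil r/2\rceil-1$ boundary term on that right‑hand side.

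Now consider the boundary value problem
\[
 \Delta^m u = g \ \text{ in }\ball, \qquad \Delta^k u = \Delta^k f\big|_{\sph}\ \text{ on }\sph,\quad k=0,1,\dots,m-1,
\]
which decouples into a cascade of $m$ Poisson problems: $\Delta^{m-1}u$ solves $\Delta(\Delta^{m-1}u)=g$ in $\ball$ with boundary value $\Delta^{m-1}f|_{\sph}$, then $\Delta^{m-2}u$ solves $\Delta(\Delta^{m-2}u)=\Delta^{m-1}u$ with boundary value $\Delta^{m-2}f|_{\sph}$, and so on. Applying at each level the $L^p$ regularity estimate for the Dirichlet problem for $\Delta$ on the smooth domain $\ball$ — namely, $\Delta v=h\in W_p^{t}(\ball)$ and $v|_{\sph}=\psi\in W_p^{t+2-1/p}(\sph)$ imply $v\in W_p^{t+2}(\ball)$ with $\|v\|_{W_p^{t+2}(\ball)}\le c(\|h\|_{W_p^{t}(\ball)}+\|\psi\|_{W_p^{t+2-1/p}(\sph)})$, see \cite{Triebel} — successively at the non‑negative integer exponents $t=r-2m,\,r-2m+2,\dots,r-2$, an induction along the cascade produces $u\in W_p^r(\ball)$ with $\|u\|_{W_p^r(\ball)}\le c(\|g\|_{W_p^{r-2m}(\ball)}+\sum_{k=0}^{m-1}\|\Delta^k f\|_{W_p^{r-2k-1/p}(\sph)})$; the trace exponents match because, having already gained interior smoothness $r-2(m-j)-2$ for $\Delta^{m-j+1}u$, the boundary datum needed at the next step lies in $W_p^{r-2(m-j)-1/p}(\sph)$, which is precisely the space containing $\Delta^{m-j}f|_{\sph}$. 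Finally, the homogeneous version of the problem has only the solution $u\equiv0$ — peel off one Laplacian at a time and use uniqueness for the harmonic Dirichlet problem, which is legitimate since $\Delta^{m-1-i}(u-f)\in W_p^2(\ball)$ at each level — so $u$ is unique; since $f\in W_p^r(\ball)$ solves the same problem with the same data, $f=u$, and the last displayed bound, combined with the estimate for $\|g\|_{W_p^{r-2m}(\ball)}$ from the previous paragraph, is exactly $\|f\|_{W_p^r(\ball)}\le c\times(\text{right-hand side of }\eqref{eq:EquivNorm2})$.

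The bulk of the work is the reverse inequality, and within it the one genuinely delicate point is the bookkeeping of the Sobolev and trace exponents so that the cascade of elliptic estimates closes — each step must gain exactly two interior derivatives while consuming a boundary datum in the matching fractional space $W_p^{\cdot-1/p}(\sph)$ — together with, for odd $r$, the preliminary reconstruction of $\Delta^{\lfloor r/2\rfloor}f$ in $W_p^1(\ball)$ from its gradient and its boundary trace. The remaining ingredients (the easy direction, the decoupling of the polyharmonic problem, and the uniqueness argument) are standard, and the only external inputs are the $L^p$ trace theorem and the $L^p$ elliptic regularity theorem for the Dirichlet problem on a smooth bounded domain, both already invoked in the paper from \cite{Triebel}, along with Lemma \ref{EquivNorm1} and the embedding \eqref{eq:imbedding}.
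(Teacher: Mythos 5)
Your proof is correct and follows essentially the same route as the paper: the paper cites the Triebel isomorphism $\{\Delta, I|_{\sph}\}: W_p^{s+2}(\ball)\to W_p^s(\ball)\times W_p^{s+2-1/p}(\sph)$ and applies the resulting two-sided estimate recursively, then handles odd $r$ with Lemma \ref{EquivNorm1} exactly as you do; your cascade of Dirichlet problems is the same recursion, merely packaged as an existence-plus-uniqueness argument for an auxiliary $u$ where one could simply apply the a priori estimate directly to $\Delta^{m-1}f,\Delta^{m-2}f,\dots$ in turn.
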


\begin{proof}
  By Thoerem 5.5.2 in  \cite[p.\,391]{Triebel},   $\{ \Delta, I|_{\sph}\}$ is an isomorphic  mapping from $W^{2+s}_{p}(\ball)$
  onto $W^{s}_{p}(\ball)\times W^{2+s-1/p}_{p}(\sph)$, which means that 
\begin{align*}
  \|f\|_{W_p^{s+2}(\ball)} \sim    \|\Delta f \|_{W_p^{s}(\ball)} + \|f \|_{W_p^{2+s-1/p}(\sph)},
  \qquad\forall\, f\in W_p^{s+2}(\ball).
\end{align*}
It then follows from recursive reduction that, for any  $f\in W_p^{r}(\ball)$,
\begin{align*}
\| f\|_{W_p^{r}(\ball)} \sim &  
 \|\Delta^m f \|_{W_p^{r-2m}(\ball)}  + \sum_{k=0}^{m-1} \|\Delta^{k} f \|_{W_p^{r-2k-1/p}(\sph)},
\end{align*} 
which proves \eqref{eq:EquivNorm2} for $r=2m$. Furthermore, assuming $r=2m+1$ and taking 
$\CF( f) =  \| f  \|_{W_p^{1-1/p}(\sph)} $ and $s=1$ in Lemma \ref{EquivNorm1}, we can then deduce that
\begin{align*}
 \| f\|_{W_p^{r}(\ball)} \sim &  \| \nabla  \Delta^{m} f \|_{L^p(\ball)}   + \sum_{k=0}^{m} \|\Delta^k f \|_{W_p^{r-2k-1/p}(\sph)},
\end{align*}
which  proves \eqref{eq:EquivNorm2} for $r=2m+1$. The proof is completed. 
\end{proof}

In particular, for $f \in \accentset{\circ}{W}_p^{s}(\ball)$, \eqref{eq:EquivNorm2} together with
the inequality $\|\partial_i \partial_j f\|_{L^p(\ball)}  \le c \|\Delta f\|_{L^p(\ball)}$ for $1 < p <\infty$ imply 
the inequality \eqref{eq:norm-zeroboundary}.

\subsection*{Acknowledgement}
The authors would like to appreciate Professor K. Atkinson for his help in improving the presentation.


\begin{thebibliography}{99}

%\bibitem{AH}
%      K. Atkinson and O. Hansen,
%       Solving the nonlinear Poisson equation on the unit disk,
%       \textit{J. Integral Equations Appl.}  \textbf{17} (2005), 223--241.

\bibitem{Adam}
        R. A. Adam, 
        \textit{Sobolev spaces},   
        Academic Press, New York, 1975. 

\bibitem{ACH1}
        K. Atkinson, D. Chien and O. Hansen,
        A spectral method for elliptic equations: the Dirichlet problem, 
        \textit{Adv. in Comput, Math.}, \textbf{33} (2010), 169--189.

\bibitem{ACH2}
        K. Atkinson, D. Chien and O. Hansen,
        A spectral method for elliptic equations: the Neumann problem, 
        \textit{Adv. in Comput, Math.}, \textbf{34} (2011), 295--317.
        
\bibitem{AH}
       K. Atkinson and W. Han,
       \textit{Spherical Harmonics and Approximations on the Unit Sphere: An Introduction},
       Lecture Notes in Math. \textbf{2044}, Springer, 2012.


%\bibitem{AQ}
%F. Auteri and L. Quartapelle,
%Navier-Stokes spectral solver in a sphere,
%{\em Journal of Computational Physics}, 
%\textbf{228} (2009), 7197--7214.

\bibitem{AQ07}
F. Auteri and L. Quartapelle,
Spectral solvers for spherical elliptic problems,
{\em Journal of Computational Physics},
\textbf{227} (2007), 36--54. 


\bibitem{BM}
C. Bernardi, M. Dauge, and Y. Maday,
Sectral Methods for Axisymmetric Domains. In: Ciarlet P G, Lions P L, eds. {\em Series
in Appl Math}, Paris: Gauhtier-Villars/North-Holland, 1999

%\bibitem{Bialecki}
%B. Bialecki and A. Karageorghis, 
%Spectral Chebyshev-Fourier collocation for the Helmholtz
%and variable coefficient equations in a disk, 
%{\em Appl. Numer. Math.}, 
%\textbf{227} (2008),  8588--8603.

\bibitem{Boyd2001}
J. P. Boyd,
{\em Chebyshev and Fourier Spectral Methods} (2nd edtion),
Dover, New York, 2001.

\bibitem{Boyd}
J. P. Boyd and F. Yu,
Comparing seven spectral methods for interpolation and for solving the Poisson equation in a disk: Zernike polynomials, Logan--Shepp ridge polynomials, Chebyshev--Fourier Series, cylindrical Robert functions, Bessel--Fourier expansions, square-to-disk conformal mapping and radial basis functions, 
{\em  Journal of Computational Physics},
\textbf{230}, (2011),1408--1438

\bibitem{CHQZ}
 C. Canuto, M.Y. Hussaini, A. Quarteroni,  Th. A. Zang, 
{\em Spectral Methods: Fundamentals in Single Domains},
Springer-Verlag, Berlin, 2006.
 

\bibitem{CQ}
       C. Canuto and A. Quarteroni,
       Approximation results for orthogonal polynomials in Sobolev spaces, 
       \textit{Math. Comp.} \textbf{38} (1982), 67--86. 




\bibitem{DX10}
        F. Dai and Y. Xu,
        Moduli of smoothness and approximation on the unit sphere and the unit ball,
        \textit{Advances in Math.} {\bf 224} (2010), no. 4, 1233--1310.

\bibitem{DX11}
        F. Dai and Y. Xu,
        Polynomial approximation in Sobolev spaces on the unit sphere and the unit ball,
        \textit{J. Approx. Theory}  \textbf{163} (2011), 1400--1418.

\bibitem{DaiX} 
        F. Dai and Y. Xu, 
        \textit{Approximation theory and harmonic analysis on spheres and balls}, 
        Springer Monographs in Mathematics, Springer, 2013. 

%\bibitem{Dauge}
%M. Dauge, 
%Spectral-Fourier method for axisymmetric problems, 
 %{\em ICOSAHAM'95: Proceeding of the Third International Conference on Spectral and High Order Methods}, 
 %p. 55--62, Houston, 1996.

\bibitem{DX}  
       C. F. Dunkl and Y. Xu, 
       \textit{Orthogonal polynomials of several variables},
       Encyclopedia of Mathematics and its Applications \textbf{81}, Cambridge
       University Press, 2001.

%\bibitem{Grisvard}
%        P.~Grisvard, {\it Elliptic problems in nonsmooth domains}, Monographs and
 %       studies in mathematics, Pitman Advanced Pub. Program, 1985.      

%\bibitem{Fornberg}
% B. Fornberg,
% {\em  A Practical Guide to Pseudospectral Methods},
%Cambridge Univ. Press, New York,  1996.

\bibitem{GuoHuang}
B. Guo and  W. Huang,
Mixed Jacobi-spherical harmonic spectral method for Navier-Stokes equations. 
{\em Appl. Numer. Math.}, 
\textbf{57}, (2007) 939--961.

\bibitem{GSW06}
       B. Guo, J. Shen and L. Wang,
       Optimal spectral-{G}alerkin methods using generalized Jacobi  polynomials.
       {\em J. Sci. Comput.}, \textbf{27} (2006), 305--322. 

\bibitem{GSW08}
        B. Guo, J. Shen and L. Wang,
       Generalized Jacobi polynomials/functions and applications to spectral methods,
        \textit{Appl. Numer. Math.}, \textbf{59} (2009), 1011--1028.

%\bibitem{HuangMaSun}
%W. Huang, H. Ma, and W. Sun,
%Convergence Analysis of Spectral Collocation Methods for a Singular Differential Equation. 
%{\em SIAM J. Numer. Anal.},
% 41:6 (2003), 2333--2349.

\bibitem{Lax}
     P. D. Lax and  A. N. Milgram, 
      Parabolic equations. 
       \textit{Contributions to the theory of partial differential equations}, pp. 167--190, 
      Annals of Mathematics Studies, no. 33. Princeton University Press, Princeton, N. J., 1954.

\bibitem{LiShen}
         H. Li and J. Shen,
         Optimal error estimates in Jacobi-weighted Sobolev spaces for polynomial approximations 
         on the triangle, 
         \textit{Math. Comp.},  \textbf{79} (2010), 1621--1646.

\bibitem{Matsu}
T.  Matsushima, and P. S. Marcus, 
A spectral method for polar coordinates, 
\textit{J. Comput. Phys.},
 \textbf{120} (1995), 365--374.

\bibitem{mazia2011}
        V. Maz'ya and T. Shaposhnikova,  
        \textit{Sobolev Spaces: with Applications to Elliptic Partial Differential Equations}, 2nd ed.,
        Springer, Berlin Heidelberg, 2011. 

%\bibitem{Matsushima}
%T. Matsushima and P. S. Marcus, 
%A spectral method for polar coordinates, 
%{\em J. Comput. Phys.},
%\textbf{120} (1995), 365-374.

\bibitem{PX} 
        M. Pi\~nar, Y. Xu, 
        Orthogonal polynomials and partial differential equations on the unit ball, 
        \textit{Proc. Amer. Math. Soc.} \textbf{137} (2009), 2979--2987.

\bibitem{Shen97} 
J. Shen, 
Efficient spectral-{G}alerkin methods III: polar and cylindrical geometries,
{\em SIAM Journal on Scientific Computing},  
\textbf{18} (1997), 1583--1604. 

\bibitem{Shen99} 
J. Shen, 
Efficient spectral-{G}alerkin methods IV: spherical geometries,
{\em SIAM Journal on Scientific Computing},  
\textbf{20} (1999), 1438-1455. 


\bibitem{Shen2000}
J. Shen,
A new fast Chebyshev-Fourier algorithm for Poisson-type equations in polar geometries,
{\em Applied Numerical Mathematics},
\textbf{33} (2000), 183--190.


\bibitem{ShenWang}
J. Shen and S. Wang,
A fast and accurate numerical scheme for the primitive equations of the atmosphere,
{\em SIAM J. Numer. Anal.}, 
\textbf{36} (1999), 719--737.


\bibitem{SWL07}
        J. Shen, L. Wang, and H. Li, 
        A triangular spectral element method using fully tensorial rational basis functions,
        \textit{SIAM J. Numer. Anal.}, \textbf{47} (2009), 1619--1650.

%\bibitem{Stein}
 %        E. Stein,
 %        \textit{Singular integrals and differentiability of functions},
 %        Princeton Univ. Press, New Jersey, 1970.

\bibitem{Sz} 
         G. Szeg\"{o}, 
         \textit{Orthogonal polynomials}, 
         Amer. Math. Soc., Providence, RI, 1975.

\bibitem{Triebel}
         Hans Triebel, {\it Interpolation Theory, Function Spaces, Differential Operators}, 
         North-Holland Publishing Company, 1978.

%\bibitem{Verkleya}
 %W. T. M. Verkley
 %A pseudo-spectral model for two-dimensional incompressible
%flow in a circular basin. I. Mathematical formulation, 
% \textit{J. Comput. Phys.},
%\textbf{136:1} (1997),  100--114.
 
%\bibitem{Verkleya}
%W. T. M. Verkley,
% A pseudo-spectral model for two-dimensional incompressible
%flow in a circular basin. II. Numerical examples, 
%  \textit{J. Comput. Phys.},
%\textbf{136:1} (1997),  115--131.

\bibitem{WangGuo}
L. Wang  and B. Guo,
Mixed Fourier-Jacobi spectral method. 
{\em J. Math. Anal. Appl.}, 
\textbf{315} (2006), 8--28.    


\bibitem{X06} 
         Y. Xu, 
         A family of Sobolev orthogonal polynomials on the unit ball,
        \textit{J. Approx. Theory} \textbf{138} (2006) 232--241.

\bibitem{X08} 
        Y. Xu,
        Sobolev orthogonal polynomials defined via gradient on the unit ball,  
       \textit{J. Approx. Theory}  \textbf{152} (2008) 52--65.
    
\end{thebibliography}
\end{document}